\newtheorem{theo}{Theorem}[section]
\newtheorem{lemma}[theo]{Lemma}
\newtheorem{cor}[theo]{Corollary}
\newtheorem{prop}[theo]{Proposition}
\newtheorem{ann}{Assumption}
\newtheorem*{Rk}{Remark}
\numberwithin{equation}{section}
\def\EE{\mathcal{EE}}
\def\Z{\mathbb{Z}}
\def\N{\mathbb{N}}
\def\P{\mathbb{P}}
\def\E{\mathbb{E}}
\def\EE{\mathcal{E}}
\def\R{\mathbb{R}}
\def\t{\textrm}
\newcommand{\be} {\begin{equation}}
\newcommand{\ee} {\end{equation}}
\newcommand{\bea} {\begin{eqnarray}}
\newcommand{\eea} {\end{eqnarray}}
\newcommand{\Bea} {\begin{eqnarray*}}
\newcommand{\Eea} {\end{eqnarray*}} 
\begin{document}
\title{Small positive values for supercritical branching processes in random environment}
\author{Vincent Bansaye and Christian B\"oinghoff}
\maketitle \vspace{1.5cm}

\begin{abstract} Branching Processes in Random Environment (BPREs) $(Z_n:n\geq0)$  are the generalization of Galton-Watson processes where 
in each generation the reproduction law is picked randomly in an i.i.d. manner. In the supercritical case, the process survives 
with positive probability and then almost surely grows geometrically. This paper focuses on rare events when the process takes positive but small values for large times. \\
We describe the asymptotic behavior of $\P(1 \leq Z_n \leq k|Z_0=i)$, $k,i\in\mathbb{N}$ as $n\rightarrow \infty$. More precisely, 
we characterize the exponential decrease of $\P(Z_n=k|Z_0=i)$ using
a spine  representation due to Geiger. We then provide some bounds for this rate of decrease. \\
If  the reproduction laws are linear fractional, this rate becomes more explicit and two regimes appear. Moreover, we show that these regimes  affect the asymptotic behavior of 
 the most recent common ancestor, when the population is conditioned to be small but positive for large times.
\end{abstract}

{\em AMS 2000 Subject Classification.} 60J80, 60K37, 60J05, 60F17,  92D25

{\em Key words and phrases. supercritical branching processes,  random environment, large deviations, phase transitions} 
\maketitle

\section{Introduction}
A branching process in random environment (BPRE) is a discrete time and discrete size population model going back to \cite{smith69,athreya71}. In each generation, an
offspring distribution is picked at random, independently from one generation to the other. We can think of a population of plants having a one-year life-cycle.
In each year, the outer conditions vary in a random fashion. Given these conditions, all individuals reproduce independently according to the same mechanism. Thus, it satisfies both the Markov and branching properties. \\
Recently, the problems of rare events and large deviations have attracted attention  \cite{kozlov06, bansaye08,BK09,kozlov10,HuangLiu, BB10}. 
However, the problem of small positive values has not been treated except in the easiest case which assumes non-extinction, 
i.e. $\P(Z_1\geq 1|Z_0=1)\geq 1$ (see \cite{bansaye08}).
For Galton-Watson processes, the explicit  equivalent of this probability is well-known (see e.g. \cite{AN}[Chapter I, Section 11, Theorem 3]). In particular, denoting by $f$ the probability generating function of the reproduction law, we have for  $k$ large enough,

\be
\label{asGW}
\varrho:=\lim_{n\rightarrow \infty} \tfrac{1}{n} \log \P(1\leq Z_n\leq k \vert Z_0=1)=f'(p_e), \  \t{where} \quad  p_e=\P(\exists n \in \N : Z_n=0 \ \vert \ Z_0=1).
\ee
Moreover, the rate of decrease remains equal to $\varrho$ if $k_n$ decreases sub-exponentially. 
It means that as soon as $k_n/\exp(\delta n) \rightarrow 0$ as $n\rightarrow \infty$ for every $\delta >0$, 
then $\lim_{n\rightarrow \infty} \frac{1}{n} \log \P(1\leq Z_n\leq k_n \ \vert \ Z_0=1)=\varrho$.
In this paper, we focus on the existence and characterization of $\varrho$ in the random environment framework. 
It is organized as follows.\\
 First, we give the classical notations and properties of BPRE. In the next section, we state our results. We prove the existence of $\varrho$ and a characterization of its value via a spine construction, give a lower bound and an upper bound which have natural interpretations. 
Finally, we specify our results in the linear fractional case, where two regimes appear, 
which are also visible in the time of the most recent common ancestor (MRCA).\\
In the rest of the paper, the proofs of these results are presented. Section \ref{section4} deals with a tree construction due to Geiger, which is used in Section \ref{proofs} 
to  characterize $\rho$. In Section \ref{rho0}, we prove that $\rho>0$ under suitable assumptions.  
In Section \ref{section6}, we prove a lower bound for $\varrho$ in terms of the rate function of the associated random walk.  
Finally, 
in Sections  \ref{linearfrac} and \ref{secmrca} the statements for the linear fractional case are proved using the general results obtained before, whereas in Section \ref{examples}, 
we present some details on two examples. \\

For the formal definition of a branching process $Z$ in random environment, let $Q$ be a random variable taking values in $\Delta$, the space of all probability measures
 on $\N_0=\{0,1,2,\ldots\}$. We denote by $$m_q:=\sum_{k\geq 0} k\ q(\{k\})$$
 the mean number of offsprings of $q\in \Delta$. For simplicity of notation, we will shorten 
$q(\{\cdot\})$ to $q(\cdot)$ throughout this paper.
 An infinite sequence
$\mathcal{E}=(Q_1,Q_2,\ldots)$ of independent, identically distributed (i.i.d.) copies of $Q$ is called a random environment. Then the integer valued process
 $(Z_n : n\geq 0)$ is called a branching process in 
the random environment $\mathcal{E}$ if $Z_0$ is independent of $\mathcal{E}$ and it satisfies 

\begin{equation}  \label{transition} 
    \mathcal{L} \big(Z_{n} \; \big| \; \, \mathcal{E}, \  Z_0,\dots, Z_{n-1}
    \big) \ = \ Q_{n}^{*Z_{n-1}} \qquad \text{a.s.}
\end{equation} 
for every $n\geq 1$, where $q^{*z}$  is the $z$-fold convolution of the measure $q$. We introduce the probability generating function (p.g.f)
of $Q_n$, which is denoted by $f_n$ and defined by 
$$f_n(s):=\sum_{k=0}^\infty s^k Q_n(k), \qquad (s\in [0,1]).$$ 
In the whole paper, we denote  indifferently  the associated random environment by $\mathcal{E}=(f_1, f_2, ...)$ and $\mathcal{E}=(Q_1, Q_2, ...)$. The characterization (\ref{transition}) of the law of $Z$  becomes
$$\E\big[s^{Z_{n}}\vert \mathcal{E}, \  Z_0,\dots,Z_{n-1}\big]=f_n(s)^{Z_{n-1}} \qquad \text{a.s.}\qquad (0\leq s\leq 1, n \geq 1).$$

$ \qquad$ Many  properties of $Z$ are mainly determined by the \textbf{random walk associated with the environment} $(S_n \ : \ n\in\N_0)$ which is defined by
$$S_0=0, \qquad S_{n}-S_{n-1}=X_n \quad (n\geq 1),$$ 
where $$X_n:=  \log m_{Q_n}=\log f_n'(1)$$
are i.i.d. copies of the logarithm of the mean number of offsprings  
$X:=\log (m_Q)=\log(f'(1))$. \\
Thus, one can check easily that
\begin{eqnarray}
\E[Z_n| Q_1,\ldots,Q_n, Z_0=1] &=& e^{S_n} \quad \mbox{a.s.}  \label{ew1}
\end{eqnarray}
There is a well-known classification of BPRE \cite{athreya71}, which we recall here in the case $\mathbb E[\vert X \vert] <\infty$. 
In the subcritical case ($\mathbb{E}[X]<0$), the population becomes extinct at an exponential rate in almost every environment. Also in the critical case ($\mathbb{E}[X]=0$), the process 
becomes extinct a.s. if  we exclude the degenerated case when $\P(Z_1=1|Z_0=1)=1$.  
In the supercritical case ($\mathbb{E}[X]>0$), the process survives with positive probability under quite general assumptions on the offspring distributions 
(see \cite{smith69}). Then   $\E[Z_1\log^+(Z_1) / f'(1)]<\infty$ ensures that the martingale $ e^{-S_n}Z_n$ has a positive finite limit 
on the non-extinction event:
$$\lim_{n\rightarrow \infty} e^{-S_n}Z_n =W\ \text{a.s.}, \qquad \P(W>0)=\P(\forall n  \in \N : Z_n>0|Z_0=1)>0.$$ 
 
The problem of small positive values is linked to the left tail of $W$ and the existence of harmonic moments. In the Galton-Watson case, we refer to \cite{athreya, Rouault, FlWa}. 
For BPRE, Hambly \cite{Hambly} gives the  tail of $W$ in $0$, whereas  Huang \& Liu \cite{HuangLiu, HuangLiu2} have obtained other various results in this direction.  \\

\section{Probability of staying bounded without extinction}\label{section2}

Given the initial population size $k$, the associated probability is denoted by $\mathbb{P}_k(\cdot):=\mathbb{P}(\cdot|Z_0=k)$. For convenience, 
we write  $\mathbb{P}(\cdot)$ when  the size of the initial population is taken equal to $1$ or does not matter.
Let $f_{i,n}$ be the probability generating function of $Z_n$ started in generation $i\leq n$ :
$$f_{i,n}:=f_{i+1} \circ f_{i+2} \circ \cdots \circ f_{n}, \quad f_{n,n}=Id \qquad \t{a.s.} $$

We will now specify the asymptotic behavior of $\P_i(Z_n=j)$ for $i,j\geq 1$, which  may depend both on $i$ and $j$. One can first observe
that  some integers $j$ cannot be reached by $Z$ starting 
from $i$ owing to the support of the offspring distribution. \\
The first result below introduces the rate of decrease $\varrho$ of $\P_i(Z_n=j)$ for $i,j\geq 1$ and gives a trajectorial  interpretation of the associated rare event $\{Z_n=j\}$. 
The second one provides general conditions to ensure that $\varrho>0$. It also gives an  upper bound  of  $\varrho$, which may be reached, 
in terms of the rate function of the random walk $S$. This bound corresponds to the environmental stochasticity, which means that the rare event $\{Z_n=j\}$ 
is explained by rare environments. 
The next result yields an explicit expression of the rate $\varrho$ in the case of  linear fractional offspring distributions, 
where two supercritical regimes appear. The last corollary considers the most recent common ancestor, where a third regime appears which is 
located at the borderline of the phase transition. \\

Let us define 
$$\mathcal{I}:=\big\{ j \geq 1 \ :\ \P(Q(j)>0, Q(0)>0)>0\big\}$$
and introduce the set $Cl(\{z\})$ of integers that can be reached from $z\in\mathcal{I}$, i.e.
$$ Cl(\{z\}):=\big\{ k\geq 1 : \exists n \geq 0  \text{ with } \ \P_z(Z_n=k)>0\big\}. $$
Analogously, we define $Cl(\mathcal{I})$ as the set of integers which can be reached from $\mathcal{I}$ by  the process $Z$. More precisely,  
$$ Cl(\mathcal{I}):=\big\{ k\geq 1 : \exists n \geq 0  \text{ and } j\in\mathcal{I} \text{ with } \ \P_j(Z_n=k)>0\big\}. $$ 
We observe that $\mathcal{I} \subset Cl(\mathcal{I})$ and  if $\P(Q(0)+Q(1)<1)>0$ and $\P(Q(0)>0,  \ Q(1)>0)>0$, then $Cl(\mathcal{I})=\mathbb{N}$. \medskip\\
We are interested in the event $\{Z_n=j\}$ for large $n$.  
First, we  recall that the case $\mathbb{P}(Z_1=0)=0$ is easier and the rate of decrease of the probability is known  \cite{bansaye08}.
Indeed, then  $Z$ is nondecreasing and for 
$k\geq j \in \N$ such that $\P_k(Z_l=j)>0$ for some $l\geq 0$, we have
$$\lim_{n\rightarrow \infty} \tfrac{1}{n} \log \P_k( Z_n=j)=-k\varrho,  \qquad \text{with } \varrho=-\log \P_1(Z_1=1).$$
We note that in the case $\P_1(Z_1=1)=\P(Z_1=0)=0$, the branching process grows exponentially in almost every environment and 
the probability on the left-hand side is zero if  $n$ is large enough. \\
Thus, let us now focus on the supercritical case with possible extinction, which ensures that  $\mathcal{I}$ is not empty. The expression 
of $\rho$ in the next theorem will be used to get the other forthcoming results.
\begin{theo}\label{theo_rho1} We assume that $\E[X]>0$ and $\P(Z_1=0)>0$. Then 
the following limits exist and coincide for all $k,j\in Cl(\mathcal{I})$,
$$\varrho:=-\lim_{n\rightarrow \infty} \tfrac{1}{n} \log \P_k( Z_n=j)=
-\lim_{n\rightarrow\infty} \tfrac{1}{n} \log \E\big[Q_{n}(z_0)f_{0,n}(0)^{z_0-1} \Pi_{i=1}^{n-1} f_i'\big(f_{i,n}(0)\big)\big]$$
where  $z_0$ is the smallest element of $\mathcal{I}$.  The common limit $\varrho$ belongs to $[0,\infty)$.
\end{theo}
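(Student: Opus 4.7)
The plan is to first establish existence of $\varrho := -\lim_n \tfrac{1}{n} \log \P_{z_0}(Z_n = z_0)$ for the canonical pair $(z_0, z_0)$ by a sub-additivity argument, then use Geiger's spine construction from Section~\ref{section4} to identify $\varrho$ with the exponential rate of the right-hand expectation, and finally extend the identity to all $k, j \in Cl(\I)$ via the branching property. The bulk of the technical work will be in the spine identification; the other two steps follow well-known templates.

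For existence, I apply the Markov property at time $n$:
\begin{equation*}
\P_{z_0}(Z_{n+m} = z_0) \;\ge\; \P_{z_0}(Z_n = z_0)\, \P_{z_0}(Z_m = z_0),
\end{equation*}
so that $a_n := -\log \P_{z_0}(Z_n = z_0)$ is sub-additive, and Fekete's lemma yields $a_n/n \to \varrho \in [0, \infty]$. Finiteness follows from $z_0 \in \I$: on the positive-probability environmental event $\{Q_1(z_0) > 0,\, Q_1(0) > 0\}$, starting from $z_0$ particles one may, with positive quenched probability, have one particle produce $z_0$ offspring and the other $z_0 - 1$ produce none, so $\P_{z_0}(Z_1 = z_0) > 0$ and $\varrho \le a_1 < \infty$.

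The crux is the spine identification. Starting from $z_0$ ancestors, I single out the trajectory in which one distinguished ancestor carries a single-descendant lineage through generations $1, \ldots, n-1$ and produces $z_0$ offspring at generation $n$, while the remaining $z_0 - 1$ ancestors leave no descendants alive at time $n$. At each generation $i \in \{1,\ldots,n-1\}$, the quenched probability of ``one spine child survives to generation $n$, the other $k-1$ siblings die by time $n$'' sums to $\sum_{k\ge 1} k\, Q_i(k)\, f_{i,n}(0)^{k-1} = f_i'(f_{i,n}(0))$, while the $z_0 - 1$ off-ancestor lineages contribute $f_{0,n}(0)^{z_0-1}$. Collecting these factors will give
\begin{equation*}
\P_{z_0}(Z_n = z_0) \;\ge\; z_0 \, \E\!\left[Q_n(z_0)\, f_{0,n}(0)^{z_0-1} \prod_{i=1}^{n-1} f_i'\!\left(f_{i,n}(0)\right)\right].
\end{equation*}
For the matching upper bound, Geiger's construction decomposes $\{Z_n = z_0\}$ over all spine configurations, and any configuration with two or more surviving ancestral lineages at some intermediate generation should contribute at most a sub-exponential multiplicative correction to the displayed expectation. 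This sandwich forces both rates to coincide with $\varrho$ and simultaneously yields existence of the limit on the right-hand side.

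To extend to arbitrary $k, j \in Cl(\I)$, I use the branching property at both ends. By the definition of $Cl(\I)$ combined with $\P(Q(0) > 0) > 0$, one can exhibit integers $\ell_1, \ell_2 \ge 0$ with $\P_k(Z_{\ell_1} = z_0) > 0$ and $\P_{z_0}(Z_{\ell_2} = j) > 0$: the second is immediate since $z_0 \in \I$, while the first combines reachability of $k$ from $\I$ with a shrinking step in an environment where $Q(0) > 0$. Then
\begin{equation*}
\P_k(Z_{n+\ell_1+\ell_2} = j) \;\ge\; \P_k(Z_{\ell_1} = z_0)\, \P_{z_0}(Z_n = z_0)\, \P_{z_0}(Z_{\ell_2} = j)
\end{equation*}
gives $\limsup_n -\tfrac{1}{n} \log \P_k(Z_n = j) \le \varrho$, and a symmetric insertion of $(k, j)$ into a $z_0 \to z_0$ passage yields the matching bound. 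The main obstacle is the upper-bound half of the spine identification: controlling the total contribution of non-minimal spine configurations in Geiger's decomposition and showing that they remain sub-exponentially close to the displayed formula. This is where the tree construction of Section~\ref{section4} enters crucially and will, I expect, constitute the technical heart of the argument.
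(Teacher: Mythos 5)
Your proposal follows the paper's overall architecture (Fekete subadditivity for existence; Geiger spine for the identification; branching-property sandwich for the extension to general $k,j\in Cl(\I)$), and the lower-bound half of the spine step is correctly sketched: forcing one spine lineage of size one through generations $1,\dots,n-1$ with all siblings dying by time $n$, and the $z_0-1$ off-ancestors going extinct, does give $\P_{z_0}(Z_n=z_0)\ge z_0\,\E\!\left[Q_n(z_0)f_{0,n}(0)^{z_0-1}\prod_{i=1}^{n-1}f_i'(f_{i,n}(0))\right]$. The extension argument is also essentially that of the paper, though simpler than you make it sound: for any $k\ge 1$ one has $\P_k(Z_1=z_0)\ge \E\big[Q(0)^{k-1}Q(z_0)\big]>0$ directly from $z_0\in\I$, so no separate ``shrinking step'' is needed.

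The genuine gap is the upper bound of the spine identification, which you explicitly defer. The assertion that ``any configuration with two or more surviving ancestral lineages at some intermediate generation should contribute at most a sub-exponential multiplicative correction'' is precisely what must be proved, and it is not true as stated without using the special role of $z_0$. The paper's mechanism is to decompose on $\kappa_n$, the last generation $k\le n$ with $Q_k(0)>0$. Because $z_0$ is the \emph{smallest} element of $\I$, on $\{Q_k(0)>0\}$ one has $Q_k(j)=0$ for $1\le j<z_0$; since after $\kappa_n$ extinction is impossible, $Z$ is nondecreasing on $[\kappa_n,n]$ and $Z_n=z_0$ forces $Z_{\kappa_n}=\dots=Z_n=z_0$, and then exactly one individual in generation $\kappa_n-1$ has surviving descendants (two or more would give at least $2z_0$). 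This rigid structure turns the quenched probability of $\{Z_n=z_0,\ \kappa_n=k\}$ into the explicit product of Lemma~\ref{le_subtree2}, and summing over $k$ and comparing exponential rates (showing $\limsup_n\frac1n\log\E[Q_n(z_0)\prod_{i=0}^{n}f_i'(f_{i,n}(0))]\ge \log\E[Q(1)^{z_0}]$, so the first term always dominates) closes the bound. Without this minimality-of-$z_0$ argument, there is no obvious reason the multi-lineage configurations are sub-exponentially negligible. A second, smaller gap: to conclude that the limit on the right-hand side exists you cannot just invoke the sandwich; the paper runs a second subadditivity argument directly on $\phi_n=\P_{z_0}(\hat Z^{(0)}_n+\dots+\hat Z^{(n-1)}_n=0,\,Z_n=z_0)$, which equals the displayed expectation, to get convergence rather than merely a common value of $\liminf$ and $\limsup$ along the same scale.
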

The proof  is given in Section \ref{proofs} and  results from Lemmas  \ref{rho_1} and \ref{lemchar}. 

The right-hand side expression of $\varrho$  
correspond to the event $\{Z_n=j\}$ explained by a ``spine structure''. More precisely,  one individual survives until generation $n$ and gives birth  to the $j$ survivors in the very last generations, 
whereas the other subtrees become extinct (see forthcoming Lemma \ref{le_subtree2} for details). 
However, we are seeing in the linear fractional case (Corollary \ref{lf1}) that a multi-spine structure can also explain $\{Z_n=j\}$ in some regime. 
Thus the optimal strategy is nontrivial and will here only be discussed in the linear fractional case.\\ 
 The proof of Theorem \ref{theo_rho1} is easy if we consider the limit of $\tfrac{1}{n} \log \mathbb{P}_1(Z_n=1)$ as $n\rightarrow\infty$. 
In this case, a direct calculation of the first derivative
of $f_{0,n}$ yields the claim. However, the proof for the general case is more involved. Here, we use probabilistic arguments, which rely  on a 
spine decomposition of the conditioned branching tree via Geiger construction. \\

We  also note that we need to focus on $ i,j\in Cl(\mathcal{I})$. Indeed, 
 $\lim_{n\rightarrow\infty} \tfrac 1n\log\mathbb{P}_1(Z_n=i)$ 
and \\
$\lim_{n\rightarrow\infty} \tfrac 1n\log\mathbb{P}_1(Z_n=j)$
may both exist and be finite for $i\neq j$, but have different values.  To see that, one can consider two environments
$q_1$ and $q_2$ such that 
$$\mathbb{P}(Q_1=q_1)=1-\mathbb{P}(Q_1=q_2)>0; \quad  q_1(1)=1; \quad 
q_2(0) + q_2(2) =1.$$
Moreover, the case 
$-\infty<\limsup_{n\rightarrow\infty} \tfrac 1n\log\mathbb{P}_k(Z_n=k)<\liminf_{n\rightarrow\infty} \tfrac 1n\log\mathbb{P}_1(Z_n=k)<0$ with $k>1$
is also possible. These results are  developed in the two examples given in Section \ref{examples} at the end of this paper. \medskip\\
In the Galton-Watson case, $f$ is constant and for every $i\geq 0$, $f_i=f$ a.s. Then $f_{i,n}(0) \rightarrow p_e$ as $n\rightarrow \infty$ and we recover  the classical result (\ref{asGW}).
 \\

The results and  remarks above  could lead to the conjecture $\varrho=-\log \E[f'(p(f))]$, where $p(f)=\inf \{ s \in [0,1] : f(s)=s\}$. Roughly speaking, 
it would correspond to integrate the value obtained in the Galton-Watson case with respect to the environment. 
The two following results  show that this is not true in general. \\

To prove    that the probability of staying small but alive decays exponentially (i.e.  $\rho>0$) requires some assumptions. 
To avoid too much technicalities, we are assuming 
\begin{ann}\label{technical2}
There exists  $\gamma>0$ such that  $Q(0)<1-\gamma$ a.s. and $\mathbb{E}[|X|]<\infty$.
\end{ann}

Similarly,  to  give an upper bound of $\varrho$ in terms of the rate function of the random walk $S$, 
we require  the following Assumption. The non-lattice condition is only required for a functional limit result 
which is taken from \cite{abkv10}, whereas the truncated moment assumption is classically used for lower bounds of the survival probability of BPREs.

\begin{ann}\label{axi} We assume that  $S$ is non-lattice, i.e. for every $r>0$, $\P( X \in r\Z)<1$.\\
Moreover,  there exist  $\varepsilon >0$ and  $a\in\mathbb{N}$ such that for every $x>0$,
$$\mathbb{E}\big[(\log^+ \xi_Q(a))^{2+\varepsilon}|X>-x\big]<\infty,$$
where $\log^+ x:=\log(\max(x,1))$ and $\xi_q(a)$ is the  truncated standardized second moment 
 \begin{align*}
 \xi_q(a):=\sum_{y=a}^\infty y^2 q(y)/m_q^2, \quad a\in\mathbb{N}, q \in \Delta.
\end{align*}
\end{ann}

\begin{prop}\label{prop2} We assume that there exists  $s>0$ such that $\mathbb{E}[e^{-sX}]<\infty$.

(i) If   Assumption \ref{technical2} is fulfilled, then $\rho>0$.

(ii)  If $\P(X\geq 0)=1$ or  Assumption \ref{axi} holds, then
 $$\varrho \leq 
%\Lambda(0)=
%\sup_{\lambda\in \mathbb R} \big\{ -\log(\E[\exp(\lambda X)]) \big\}
-\log \inf_{\lambda \geq 0}  \E[\exp(\lambda X)] .$$
\end{prop}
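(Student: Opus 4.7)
The plan is to treat the two bounds independently, invoking the spine representation of Theorem \ref{theo_rho1} for (i) and a rare-environment argument for (ii).

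For (i), I would start from the expression
$$A_n = \E\bigl[Q_n(z_0)f_{0,n}(0)^{z_0-1}\prod_{i=1}^{n-1}f_i'\bigl(f_{i,n}(0)\bigr)\bigr],$$
whose exponential decay rate equals $\varrho$ by Theorem \ref{theo_rho1}. The first step is to discard $Q_n(z_0)\leq 1$ and $f_{0,n}(0)^{z_0-1}\leq 1$, and to observe that $f_{i,n}(0)$ is nondecreasing in $n$ with limit $q_i$, the quenched extinction probability seen from generation $i$; since $f_i'$ is nondecreasing on $[0,1]$, this gives $A_n\leq\E[\prod_{i=1}^{n-1}f_i'(q_i)]$. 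The key structural fact to exploit is that the BPRE conditioned on ultimate extinction is subcritical: its associated random walk $\widehat S$ has negative drift, which via a telescoping computation on $\log q_i$ translates to $\E[\log f_1'(q_1)]<0$. By the ergodic theorem, $\prod_{i=1}^{n-1}f_i'(q_i)$ therefore decays almost surely at an exponential rate. The delicate step is to upgrade this into a bound on the expectation: here Assumption \ref{technical2} and the moment condition $\E[e^{-sX}]<\infty$ enter, the former ensuring that $q_i$ stays uniformly below $1$ and the latter controlling the right tail of $\log f_i'(q_i)$, so that a Cramér-type estimate for the stationary sequence $Y_i:=\log f_i'(q_i)$ produces $A_n\leq e^{-cn}$ with $c>0$.

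For (ii), the plan is the environmental-stochasticity argument: lower-bound $\P_k(1\leq Z_n\leq j)$ by first requiring the environment to satisfy $S_n\approx 0$, then showing that conditionally the BPRE remains small but positive with non-negligible probability. Let $\Lambda^*(0)=-\inf_\lambda\log\E[e^{\lambda X}]$ denote the Cramér rate function of $S$ at $0$; since $\E[X]>0$, the infimum is attained at some $\lambda^*\leq 0$, and the hypothesis $\E[e^{-sX}]<\infty$ guarantees finiteness of the m.g.f.\ in a neighbourhood of $\lambda^*$. Cramér's lower bound then yields, for any $\varepsilon>0$,
$$\P\bigl(|S_n/n|<\varepsilon\bigr)\;\geq\;\exp\bigl(-n\Lambda^*(0)-n\,\tau_n(\varepsilon)\bigr),\qquad \tau_n(\varepsilon)\to 0.$$
On this event the BPRE is asymptotically critical, and under Assumption \ref{axi} I would invoke the functional limit theorem of \cite{abkv10} to obtain a uniform constant $c_\varepsilon>0$ with $\P(1\leq Z_n\leq j\mid\mathcal{E})\geq c_\varepsilon$ on the chosen environments. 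In the alternative case $\P(X\geq 0)=1$, $S$ is non-decreasing and a direct argument replaces the functional convergence. Combining the two estimates gives $\P_k(1\leq Z_n\leq j)\geq c_\varepsilon\,e^{-n\Lambda^*(0)-n\tau_n(\varepsilon)}$, and the claimed bound on $\varrho$ follows on taking logs, dividing by $n$, and letting $\varepsilon\to 0$.

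The main obstacle in (i) is the final step: transforming the almost-sure exponential decay of $\prod f_i'(q_i)$ into expectation decay requires quantitative control of the rare ``bad'' environments where $q_i$ is close to $1$, and this is precisely where the two moment assumptions combine through a truncation-Chernoff argument. In (ii), the crucial technical input is the uniform constant $c_\varepsilon$ furnished by the functional limit of \cite{abkv10}; the remaining steps are standard Cramér large deviations for the i.i.d.\ random walk $S$.
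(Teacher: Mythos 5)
Your high-level intuitions are right — environmental stochasticity for (ii), and an ``effectively subcritical'' phenomenon for (i) — but both of your technical routes have genuine gaps that the paper's actual proof is carefully designed to avoid.

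For (i), the bound $A_n \leq \E\big[\prod_{i=1}^{n-1} f_i'(q_i)\big]$ (with $q_i$ the quenched extinction probability seen from generation $i$) is valid, but passing from this to exponential decay of the \emph{expectation} is exactly where your argument breaks down. First, the inequality $\E[\log f_1'(q_1)]<0$ is asserted via an unspecified ``telescoping'', but it is not obvious — note that $f'_{0,n}(0)=\prod_i f'_i(f_{i,n}(0))$, which is \emph{not} $\prod_i f'_i(q_i)$, so there is no clean telescope. Second, and more fundamentally, each $q_i$ depends on the entire future environment $(Q_{i+1},Q_{i+2},\dots)$, so $(\log f_i'(q_i))_i$ is a stationary sequence with strong long-range dependence, and the factors $f_i'(q_i)$ can exceed $1$ (when $q_i$ is close to $1$, $f_i'(q_i)\approx e^{X_i}$ can be large). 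A ``Cram\'er-type estimate'' for such a sequence is not a standard tool and would require a serious decorrelation input that you never supply. The paper's proof avoids this entirely: starting from the same quantity, it uses $\log x\le x-1$ to reduce to bounding $\E\big[\exp(-\sum_j \P(\hat Z_n^{(j)}>0\mid\EE))\big]$, truncates the offspring law to get uniform second-moment control, and then looks only at \emph{prospective minima} of the drifted walk $\breve S$. At these indices, $f_{\nu(j),n}(0)$ is uniformly bounded away from $1$, the quantities $A_{\nu(j)}$ are genuinely i.i.d.\ (by the renewal structure of prospective minima), and a Chernoff bound on the renewal counts finishes the job. That renewal-time decoupling is the missing idea in your argument.

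For (ii), your event $\{|S_n/n|<\varepsilon\}$ is too coarse. Under the tilted (critical) measure, conditioning on $S_n\approx 0$ does \emph{not} give a survival probability bounded below uniformly in $n$: the walk can make deep negative excursions on the way and kill the population, and generically $\mathbf{P}(Z_n>0)\sim cn^{-1/2}\to 0$. The paper instead conditions on the excursion event $\{L_n\ge 0,\, S_n\le c\}$, which has only polynomial cost (Lemma \ref{le_as}), and then invokes Lemma \ref{le_critical} for $\liminf_n\mathbf P(Z_n>0\mid L_n\ge0,S_n\le c)>0$; the polynomial prefactor disappears in the $\tfrac1n\log$ limit. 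There is also a second gap: your plan bounds $\P(1\le Z_n\le j\mid\EE)$ below for \emph{fixed} $j$, but the Markov-inequality step $\P_z(Z_n>k\mid\EE)\le ze^{S_n}/k$ only becomes negligible when $k=k_n\to\infty$. The paper therefore works with a sub-polynomial sequence $k_n$ and uses the second part of Lemma \ref{rho_1} to transfer the bound back to $\P_z(Z_n=z)$; your sketch skips this conversion. Finally, the truncation $E_{x,n}=\{\min_i X_i>x\}$ followed by $x\to-\infty$ is needed in the paper to guarantee the tilted measure is well defined and has the required moments; your sketch implicitly assumes $\E[e^{\lambda^*X}]<\infty$ at the optimal $\lambda^*$, which is exactly what the truncation is there to circumvent.
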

We note that the exponential moment assumption is equivalent to the existence of a proper rate function $\Lambda$ for the lower deviations of $S$. 
The lower bound (i) is proved in Section \ref{rho0}.
The second bound is the rate function of the random walk $S$ evaluated in $0$, say $\Lambda(0)$. Indeed, recalling that $\E[X]>0$, the supremum in the Legendre transform can be taken over $\R^+$ instead of $\R$. Exctracting $-1$ yields the upper bound above. It
 is proved in Section \ref{section6} and used for the proof of the next Corollary \ref{lf1}. It can be reached and  has a natural interpretation in terms of 
environmental stochasticity. Indeed, one way to keep the population bounded but alive  comes from a 'critical environment', which means $S_n \approx 0$.  Then $\E[Z_n \ \vert \ \mathcal{E}]=\exp(S_n)$ is neither small nor large and one can expect that the population is positive but bounded. 
The event $\{S_n \approx 0 \}$ is a large deviation event whose probability decreases exponentially with rate $\Lambda(0)$. This bound is thus directly explained by the \emph{environmental  stochasticity}.  \\

Now, we focus on the linear fractional case. We derive an explicit expression of $\varrho$ and describe the position of the most recent 
common ancestor of the population conditioned to be positive but small. We recall that a probability generating function of a random variable $R$ is linear fractional (LF) if there exist positive real numbers  $m$ and $b$ such that 
$$f(s)=1- \frac{1-s}{m^{-1}+  b m^{-2} (1-s)/2},$$
where $m=f'(1)$ and $b=f''(1)$. This family includes the probability generating function of geometric distributions, 
 with $b=2m^2$. More precisely, LF distributions are geometric laws with a second free parameter $b$  which allows to change the probability of 
the event $\{R=0\}$.

\begin{cor}\label{lf1}
We assume that $f$ is a.s. linear fractional,  $\mathbb{E}[|X|]<\infty$, $\mathbb{E}[X^2e^{-X}]<\infty$ and $\mathbb{P}(Z_1=0)>0$. \\
We assume also that either  $\P(X\geq 0)=1$ or Assumption \ref{axi} hold. Then
\begin{eqnarray}
\varrho&=& \left\{ \begin{array}{l@{\quad,\quad}l}
                         -\log\mathbb{E}\big[e^{-X}\big]     & \mbox{if } \ \mathbb{E}[Xe^{-X}]\geq 0\\
-\log \inf_{\lambda \geq 0} \E[\exp(\lambda X)]  \ \ (=\Lambda(0)) & \mbox{else}
                             \end{array} \right. .
\end{eqnarray}
\end{cor}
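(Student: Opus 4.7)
The plan is to combine Theorem \ref{theo_rho1} with the closed-form recursions available for LF generating functions. The two key LF identities are
$$\frac{1}{1-f(s)} = \frac{1}{m(1-s)} + \frac{b}{2m^2}, \qquad f'(s) = \frac{1}{m}\!\left(\frac{1-f(s)}{1-s}\right)^{\!2},$$
which iterate and telescope along the composition $f_{0,n}$. A non-degenerate LF law with $Q(0)>0$ charges every positive integer a.s., so $z_0=1$ in Theorem \ref{theo_rho1}, and together with $Q_n(1) = e^{-X_n}(1-f_n(0))^2$ this collapses the general representation to
$$\varrho = -\lim_{n\to\infty} \tfrac{1}{n}\log \E\big[e^{-S_n}(1-f_{0,n}(0))^2\big], \qquad \frac{1}{1-f_{0,n}(0)} = e^{-S_n} + \sum_{k=1}^n \eta_k e^{-S_{k-1}},$$
with $\eta_k := b_k/(2m_k^2)>0$; in particular $1-f_{0,n}(0)\le \min(1,e^{S_n})$.

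The crux is then a one-line interpolation for the lower bound on $\varrho$: $(1-f_{0,n}(0))^2 \leq \min(1, e^{2S_n}) \leq e^{\alpha S_n}$ for every $\alpha \in [0,2]$, so
$$\E\big[e^{-S_n}(1-f_{0,n}(0))^2\big] \leq \inf_{\alpha \in [0,2]} \E[e^{(\alpha-1)X}]^n = \inf_{\mu \in [-1,1]} \E[e^{\mu X}]^n.$$
Convexity of $\mu \mapsto \E[e^{\mu X}]$ and the sign of its derivative $\E[Xe^{-X}]$ at $\mu=-1$ identify this restricted infimum: it equals $\E[e^{-X}]$ (boundary minimizer) when $\E[Xe^{-X}] \geq 0$, and equals the unconstrained minimum $e^{-\Lambda(0)}$ (attained at $\lambda^* \in (-1,0)$) when $\E[Xe^{-X}] < 0$. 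In the latter regime this bound already matches the upper bound $\varrho \leq \Lambda(0)$ from Proposition \ref{prop2}(ii), so $\varrho = \Lambda(0)$ immediately.

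For the remaining regime $\E[Xe^{-X}] \geq 0$ I establish $\varrho \leq -\log \E[e^{-X}]$ via the Esscher tilt $d\tilde\P/d\P \propto e^{-S_n}$. Under $\tilde\P$ the $X_k$ are iid with mean $\tilde m = \E[Xe^{-X}]/\E[e^{-X}] \geq 0$, and
$$\E\big[e^{-S_n}(1-f_{0,n}(0))^2\big] = \E[e^{-X}]^n\, \tilde\E\big[(1-f_{0,n}(0))^2\big],$$
so it suffices to prove the tilted expectation does not decay exponentially. When $\tilde m > 0$ the walk drifts to $+\infty$ under $\tilde\P$, the series $\sum_k \eta_k e^{-S_{k-1}}$ converges a.s.\ to a finite positive $Y_\infty$ (summability from linear growth of $S_{k-1}$ together with a log-moment on $\eta$ extracted from Assumption \ref{axi}), hence $1-f_{0,n}(0) \to Y_\infty^{-1} > 0$ a.s.\ and bounded convergence delivers a positive limit.

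The delicate boundary case is $\E[Xe^{-X}] = 0$, where $\tilde m = 0$ and the tilted BPRE is critical, so $1-f_{0,n}(0) \to 0$ $\tilde\P$-a.s.; this is the main technical obstacle, since one must argue polynomial rather than exponential decay. I would use the $1/\sqrt n$ asymptotics of the quenched survival probability for critical LF BPRE (precisely where Assumption \ref{axi} is needed) combined with Jensen's inequality $\tilde\E[(1-f_{0,n}(0))^2] \geq (\tilde\E[1-f_{0,n}(0)])^2$ to conclude that $\tilde\E[(1-f_{0,n}(0))^2]$ is only polynomially small, so its contribution to the exponential rate vanishes and $\varrho = -\log \E[e^{-X}]$ as claimed.
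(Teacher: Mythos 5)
Your proposal is correct and follows essentially the same route as the paper: both start from the LF identity $f'_{0,n}(0)=e^{-S_n}\big(1-f_{0,n}(0)\big)^2$, use the Esscher tilt $e^{-X}/\E[e^{-X}]$ together with Jensen's inequality and known survival asymptotics for the critical/supercritical tilted process to prove $\varrho\leq -\log\E[e^{-X}]$ when $\E[Xe^{-X}]\geq 0$, and use pointwise domination of $\big(1-f_{0,n}(0)\big)^2$ for the opposite direction. Your interpolation $\big(1-f_{0,n}(0)\big)^2\leq e^{\alpha S_n}$, $\alpha\in[0,2]$, is a tidy way to package the lower bound so that a single optimization over $\mu=\alpha-1\in[-1,1]$ covers both regimes at once, whereas the paper treats $\E[Xe^{-X}]\geq 0$ with the trivial bound by $1$ and $\E[Xe^{-X}]<0$ with the bound $e^{L_n\wedge 0}$ plus a change of measure to the recurrent tilt; the substance of the argument is the same.
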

This result is also stated in the PhD of one of the authors and can  be found in \cite{boe1}.
On the level of large deviation ($\log$ scale), two regimes in the supercritical case are visible. \\
 If $\mathbb{E}[Xe^{-X}]< 0$, the event $\{ Z_n= k\}$ 
is a typical event in a suitable exceptional environment, say 'critical'. This rare event is then explained (only) by the environmental stochasticity.  \\
If $\mathbb{E}[Xe^{-X}]\geq 0$, we recover a term analogous to the Galton-Watson case, which is smaller than $\Lambda(0)$. 
The rare event is then  due to \textsl{demographical stochasticity}. \\
These two regimes seem to be analogous to the two regimes  in the subcritical case, which deal with the asymptotic behavior of $Z_n>0$, see e.g. \cite{Dek, GL, GKV03}.
Such regimes for supercritical branching processes have already  
been observed in  \cite{MH} in the continuous framework (which essentially represents linear fractional offspring-distributions).\\

Let us now focus on the  most recent common ancestor (MRCA) of the population conditionally on this rare event. More precisely, let $\mathcal{T}^n$ be
 the set of all ordered, rooted trees of height exactly $n$. We refer to  \cite{ne} for  classical definitions. 
 We say that an individual $i_{n_2}$ in generation $n_2>n_1$ stems from an individual $i_{n_1}$ 
iff there are individuals $i_{n_2-1},\ldots, i_{n_1+1}$ such that $i_{n_2}$ is a child of $i_{n_2-1}$, $i_{n_2-1}$ is a child of $i_{n_2-2}$, $\ldots$ and $i_{n_1+1}$ is a child of $i_{n_1}$.
Let $T_n\in\mathcal{T}_n$ be the random branching tree, generated by the process $(Z_k)_{0\leq k\leq n}$ conditioned on  $Z_n>0$ and denote by $MRCA_n$ the most recent common ancestor of the population at time $n$. More precisely, we consider the events 
$$A_k^n:=\big\{\text{ all individuals in generation $n$  stem from one individual in generation $n-k$}\}$$
and define the age of the MRCA in generation $n$ as the number of generations one has to go back in the past until all individuals in generation $n$ have a single common ancestor :
\begin{align*} 
MRCA_n:=\min\{k=1,2\ldots, n \ | A_k^n \ \  \text{holds} \ & \} .
\end{align*}

The case $\mathbb{P}(Z_1=0)=0$ is trivial :   extinction is  not possible, so  
 $\varrho=-\log \mathbb{P}_1(Z_1=1)$ and $\mathbb{P}_2(MRCA_n=n|Z_n=2)=1$. 
It is excluded in the statement below. Moreover, for the sake of simplicity, we restrict the study of the MRCA  to starting from one individual and conditioning on $Z_n=2$. 

\begin{cor}\label{corMRCA} We make the same assumptions as in the previous corollary. 

(i) If $\mathbb{E}[Xe^{-X}]<0$, then
\[\liminf_{n\rightarrow\infty} \mathbb{P}_1(MRCA_n=n| Z_n=2)> 0 \qquad ; \qquad \liminf_{n\rightarrow\infty} \mathbb{P}_1(MRCA_n=1| Z_n=2)>0. \]

(ii) If $\mathbb{E}[Xe^{-X}]=0$, then for every sequence $(x_n)_{n\in \N}$ such that $x_n \in [1,n]$ for every $n$, we have
\[\lim_{n\rightarrow\infty} \tfrac{1}{n} \log \mathbb{P}_1(MRCA_n=x_n |Z_n=2)=0. \]
Moreover, under the additional assumption  $\mathbb{E}\big[f''(1)\big]<\infty$,  there exist two positive finite constants $c,C$ such that for every $n\in \N$,
\[c\leq n\ \mathbb{P}_1(MRCA_{n}=n|Z_{n}=2) \leq C \]
and if $\mathbb{E}\big[f''(1)/(1-f(0))^2\big]<\infty$, then for every $\delta\in (0,1)$, there exist  two positive finite constants $c',C'$ such that for every $n\in \N$,
\[c'\leq n^{3/2}\mathbb{P}_1(MRCA_{n}=\lceil\delta n\rceil|Z_{n}=2) \leq C'. \]

(iii) If $\mathbb{E}[Xe^{-X}]>0$ and $\mathbb{E}[e^{(-1-s)X}]<\infty$ for some $s>0$, then for every $\delta\in(0,1]$,
\[\limsup_{n\rightarrow\infty}\tfrac 1n \log\mathbb{P}_1(MRCA_n>\delta n|Z_n=2)<0. \]
\end{cor}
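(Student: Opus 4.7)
The plan is to exploit the linear fractional structure to decompose $\{Z_n=2\}$ according to $T := n-MRCA_n$, the generation at which the two ancestral lineages of the gen-$n$ population last coalesce, and then to compare the contributions of the various values of $T$. Two identities are at the heart of the argument. First, the Agresti formula in the LF case gives, for any environment $\mathcal{E}$,
\[
\mathbb{P}_1(Z_n=2\mid\mathcal{E}) \;=\; \zeta_n^2\,e^{-S_n}\bigl(1-e^{-S_n}\zeta_n\bigr),
\qquad
\zeta_n^{-1} \;=\; e^{-S_n}+\sum_{k=1}^n e^{-S_{k-1}}\eta_k,
\]
with $\eta_k := f_k''(1)/(2f_k'(1)^2)$ and $\zeta_n := \mathbb{P}_1(Z_n>0\mid\mathcal{E})$. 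Second, the branching property at generation $T$ together with the reduced-tree identity $\mathbb{P}(N_T=1\mid\mathcal{E}) = \zeta_{T,n}\,f_{0,T}'(1-\zeta_{T,n})$ yields
\[
\mathbb{P}_1\bigl(MRCA_n = n-T,\,Z_n=2\,\big|\,\mathcal{E}\bigr) \;=\; A_T(\mathcal{E})\cdot B_{n-T}(\mathcal{E}),
\]
where $A_T$ is the single-line reduced-tree contribution on $[0,T]$ ending in a bifurcation (picking up a factor proportional to $f_{T+1}''(1-\zeta_{T+1,n})$), and $B_{n-T}$ is the product of two independent single-spine factors on $[T+1,n]$, each of the form $\zeta_{T+1,n}^2 e^{-(S_n-S_{T+1})}$. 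Both are explicit in $S$ and the $\eta_k$ in the LF case.

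Part (iii) follows from the previous decomposition by elementary comparison. After the Cram\'er tilt $\lambda=-1$, optimal in this regime by Corollary~\ref{lf1}, one has $\mathbb{P}_1(Z_n=2)\asymp\mathbb{E}[e^{-X}]^n$. A coalescence before generation $(1-\delta)n$ forces two simultaneous surviving spines of length $\geq\delta n$; each contributes a factor of order $\mathbb{E}[e^{-X}]^{n-T}$, so the combined cost $\mathbb{E}[e^{-X}]^{T+2(n-T)}=\mathbb{E}[e^{-X}]^{2n-T}$ is exponentially smaller than $\mathbb{P}_1(Z_n=2)$ by at least $\mathbb{E}[e^{-X}]^{\delta n}$. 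Summing over $T\leq(1-\delta)n$ and using the stronger moment assumption $\mathbb{E}[e^{(-1-s)X}]<\infty$ to absorb the polynomial prefactors after the tilt yields the claimed exponential decay. Part (i) is handled by the alternative Cram\'er tilt at $\lambda^*\in(-1,0)$ satisfying $\widetilde{\mathbb{E}}[X]=0$, which realises the rate $\varrho=\Lambda(0)>-\log\mathbb{E}[e^{-X}]$ and makes the tilted random walk centered. Here both $\mathbb{P}_1(MRCA_n=1,Z_n=2)$ and $\mathbb{P}_1(MRCA_n=n,Z_n=2)$ have the same exponential order as $\mathbb{P}_1(Z_n=2)$, and the $\liminf>0$ bounds are obtained by restricting the expectations in the explicit LF formulas (e.g.\ $\mathbb{P}_1(MRCA_n=n,Z_n=2) = \tfrac12\mathbb{E}[\zeta_{1,n}^4 e^{-2(S_n-S_1)} f_1''(1-\zeta_{1,n})]$) to the event that the tilted walk stays in a bounded window around $0$, on which all factors are uniformly bounded from below.

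The main obstacle is part (ii), the boundary case $\mathbb{E}[Xe^{-X}]=0$, where the Cram\'er tilt $\lambda=-1$ produces a centered tilted walk and $\zeta_n$ decays polynomially rather than staying bounded away from $0$. The log-rate-zero assertion follows by sandwiching $A_T\cdot B_{n-T}$ between polynomial functions of $n$ uniformly in $T\in[1,n]$, so that the conditional probability is polynomial in $n$ and its logarithm divided by $n$ tends to zero. The refined asymptotics $\mathbb{P}_1(MRCA_n=n\mid Z_n=2)\asymp 1/n$ and $\mathbb{P}_1(MRCA_n=\lceil\delta n\rceil\mid Z_n=2)\asymp n^{-3/2}$ require precise local-limit and renewal estimates for the centered tilted walk. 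The $1/n$ rate comes from $\widetilde{\mathbb{E}}[\zeta_n^2]\asymp 1/n$, valid under the second-moment assumption $\mathbb{E}[f''(1)]<\infty$. The $n^{-3/2}$ rate encodes the classical three-halves fluctuation law, applied to two independent centered walks constrained to keep the two sub-populations alive over disjoint time blocks of length $\Theta(n)$; the stronger hypothesis $\mathbb{E}[f''(1)/(1-f(0))^2]<\infty$ is used to integrate the LF prefactor along the conditioned path.
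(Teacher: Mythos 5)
Your overall strategy -- exploit the linear fractional identity $\mathbb{P}_1(Z_n=1\mid\mathcal{E})=e^{-S_n}\mathbb{P}_1(Z_n>0\mid\mathcal{E})^2$, decompose at the MRCA generation, tilt by $e^{-\lambda X}$, and invoke the fluctuation theory of the tilted walk -- is the same as the paper's, and your explicit LF computations (for instance $\mathbb{P}_1(MRCA_n=n,Z_n=2\mid\mathcal{E})=\tfrac12 e^{-2(S_n-S_1)}\zeta_{1,n}^4 f_1''(1-\zeta_{1,n})$) are correct. However there are three concrete gaps that would need repair before the argument closes.

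\textbf{(1) The factorization $\mathbb{P}_1(MRCA_n=n-T,Z_n=2\mid\mathcal{E})=A_T\cdot B_{n-T}$ is not an independent product conditionally on the environment.} Your $A_T$ (single reduced line on $[0,T]$) is, by the LF formula, $f'_{0,T}(f_T(s))$ with $s=\mathbb{P}(Z_n=0\mid Z_{T+1}=1,\mathcal{E})$; this $s$ depends on the \emph{future} environment $(Q_{T+1},\dots,Q_n)$, so $A_T$ and $B_{n-T}$ share randomness. The paper separates the two time blocks by first bounding $f'_{0,T}(f_T(s))(1-f_T(s))^2\le e^{-S_T}\mathbb{P}(Z_T>0\mid\mathcal{E})^2$ (equation (3.8)) and then using the monotonicity $\tfrac{1-s}{1-f_T(s)}\le(1-f_T(0))^{-1}$ so that the residual dependence collapses to a single factor $f''(1)/(1-f(0))^2$ (which explains the moment hypothesis in part (ii)). Your sketch omits this disentangling, which is exactly what makes the $n^{-3/2}$ bound work.

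\textbf{(2) The statement $\widetilde{\mathbb{E}}[\zeta_n^2]\asymp 1/n$ is false, and it is the load-bearing step in your derivation of the $1/n$ rate.} Under the critical tilt $\mathbf{P}$, the walk is centered with finite variance and one has $\mathbf{E}[\mathbb{P}(Z_n>0\mid\mathcal{E})^2]\asymp n^{-1/2}$ (upper bound from $\mathbf{E}[e^{2L_n}]=O(n^{-1/2})$, lower bound from $\mathbf{P}(Z_n>0)\gtrsim n^{-1/2}$), not $n^{-1}$. The $n^{-1}$ order of $\mathbb{P}_1(MRCA_n=n\mid Z_n=2)$ arises instead as the ratio of $\mathbb{P}_1(MRCA_n=n,Z_n=2)\asymp\gamma^n n^{-3/2}$ (via $\mathbf{E}[e^{-S_{n-1}+4L_{n-1}}]=O(n^{-3/2})$, Corollary 5.4) to $\mathbb{P}_1(Z_n=2)\asymp\gamma^n n^{-1/2}$ (Lemma 7.3). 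As written your argument does not produce the correct power.

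\textbf{(3) Part (iii): summing over $T$ is not enough because the population at the split generation can be large.} When the MRCA is early, you must sum over the size $k$ of the reduced population at the split time, and the factor $\binom{k}{2}\mathbb{P}_1(Z_{n-\lfloor\delta n\rfloor}=k)\,(\text{extinction})^{k-2}$ does not obviously sum to something of the claimed exponential order: a priori $k$ could grow, and the ``polynomial prefactor'' you invoke is not a polynomial in $n$ but a sum over all $k$. The paper devotes Lemma 7.6 to exactly this: it truncates at $k=n^2$, proves that $\inf_{z\ge cn^2}\mathbb{P}_z(Z_{\lfloor\delta n\rfloor}=2)$ decays faster than any exponential (using the LF geometric tails), and also establishes $\limsup\frac1n\log\mathbb{P}_1(1\le Z_n\le cn^2)=\log\mathbb{E}[e^{-X}]$. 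Without some substitute for this estimate, your exponential cost accounting does not justify the claimed decay.

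Finally, for the log-rate-zero assertion in (ii) the ``sandwiching'' you mention is not developed; the paper's lower bound is a concrete construction (two subtrees, one kept at size one for the first $n-x_n$ generations) combined with the already-established rate $\lim\frac1n\log\mathbb{P}_1(Z_n=2)=\log\mathbb{E}[e^{-X}]$. Your part (i) sketch is essentially the paper's argument and is fine, modulo the comment in point (1) about the dependence across time blocks, which is more benign there because the restriction to $\{L_n\ge 0,\,S_n\le c\}$ controls everything uniformly.
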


Thus three regimes appear for  the most recent common ancestor of the population. \\
If $\mathbb{E}\big[Xe^{-X}\big]>0$, which we 
call the 'strongly' supercritical case, the MRCA is at the end (close to the actual time). The probability that the MRCA is far away from the final generations decreases 
exponentially. 
Such a result is classical for branching processes which do not explode, such as subcritical Galton-Watson processes conditioned on survival. 
It corresponds to a spine decomposition of the population whose subtrees become extinct \cite{LPP}. Conditionally  on $\{Z_n=2\}$, $S$ is 
still a random walk with positive drift and will be typically large. Thus 
the conditioned process is typically small throughout all generations (as in the Galton-Watson case) as growing and then becoming small again within the favorable environment 
 has a very small probability. Consequently, the MRCA will be close to generation $n$.\\
But in the 'weakly' supercritical case 
($\mathbf{E}\big[Xe^{-X}\big]<0$), conditionally  on $\{Z_n=2\}$, the MRCA is either at the beginning (close to the root of the tree) or at the end (close to generation $n$). Such a  situation is much less usual. 
It has already been observed in  \cite{reduced} for the subcritical reduced tree of linear fractional 
BPRE conditioned to survive. Here, as indicated in the proof of Proposition \ref{prop2} (ii),  the random walk $S$ conditioned on $\{ Z_n= 2 \}$ typically looks like an excursion. It means that $S$ is conditioned on the event $\{\min\{S_0,\ldots S_n\}\geq 0, S_n\leq c\}$. In such an environment, subtrees that are either born at the beginning or at 
the end may survive until the end. All subtrees being born at some generation $\lfloor \delta n\rfloor$, $\delta\in(0,1)$ experience an unfavorable environment 
and become extinct. 
This can be seen as follows. During an excursion from 0 to $n$, typically $S_{\lfloor \delta n\rfloor}\gg 0$ and thus $e^{S_n-S_{\lfloor \delta n\rfloor}}\ll0$ and the corresponding subtree will become extinct.\\
 Finally, in the intermediate case ($\mathbb{E}\big[Xe^{-X}\big]=0$), 
the MRCA is close to the end, but the probability that the MRCA is far away from the end only decreases polynomially.
The intermediately supercritical regime is in-between the two regimes described above and conditioned on $\{Z_n=2\}$, the typical environment will neither be an excursion nor a random walk with positive 
drift.\\
One can expect several more detailed results describing the three regimes, which are beyond the scope of this paper.

\section{The Geiger construction for a branching process in varying environment (BPVE)}\label{section4}

In this section, we work in a quenched environment, which means that we fix the environment $e:=(q_1,q_2,\ldots)$. We  consider  a 
branching process  in varying environment $e$
and denote by $\mathsf{P}(\cdot)$ (resp. $\mathsf{E}$) the associated probability (resp. expectation), i.e.
$$\mathsf{P}( Z_1 = k_1, \cdots ,Z_n=k_n )=\mathbb{P}(Z_1 = k_1, \cdots ,Z_n=k_n|\mathcal E =e).$$
Thus $(f_1,f_2,\ldots)$ is now fixed and the probability generating function of $Z$ is given by 
$$\mathsf{E}\big[s^{Z_{n}} \ \vert  \  Z_0=k \big]=f_{0,n}(s)^{k} \qquad (0\leq s\leq 1).$$

We use a construction of $Z$  conditioned on survival, 
which is due  to \cite{geiger99}[Proposition 2.1] and extends the spine  construction of Galton-Watson processes \cite{LPP}.
In each generation, the individuals 
 are labeled by the integers $i=1,2,\cdots$ in a breadth-first manner ('from the left to the right'). 
We follow then the  \lq ancestral line' of the leftmost individual having a descendant in generation $n$. This line is denoted by $\mathbb L$. It means that  in generation $k$, the descendance of the  individual labeled  $\mathbb{L}_k$ survives until time $n$, whereas all the individuals whose label is less than $\mathbb{L}_k$ 
become extinct before time $n$.
The Geiger construction ensures that to the left of $\mathbb{L}$, independent subtrees conditioned on extinction in generation $n$ are growing.
To the right of $\mathbb{L}$, independent (unconditioned) trees are evolving.  Moreover the joint distribution of $\mathbb{L}_k$ and the number of offsprings in 
generation $k$ is known (see e.g. \cite{abkv11} for details, where $L:=\mathbb{L}-1$) and for every $k\geq 1$, $z\geq 1$ and $1\le1 l\leq z$,
\be
\label{eqcoupl}
 \mathsf{P}(Z_k=z, \mathbb{L}_k=l|Z_{k-1}=1, Z_n>0) = q_k(z) \frac{\mathsf P(Z_n >0 \ | Z_k=1) \mathsf{P}(Z_n=0 | Z_k=1 )^{l-1}}{\mathsf{P}(Z_n >0 \ | Z_{k-1}=1)}.
\ee
\begin{figure}[here]
%\setlength{\unitlength}{1cm}
%\begin{picture}(16,12)
%\put(0,0){\framebox(16,12)}
%\put(0,11){\includegraphics[angle=270,scale=0.55]{images/ladder.ps}}
\begin{center}
\includegraphics[angle=0,width=0.9\textwidth]{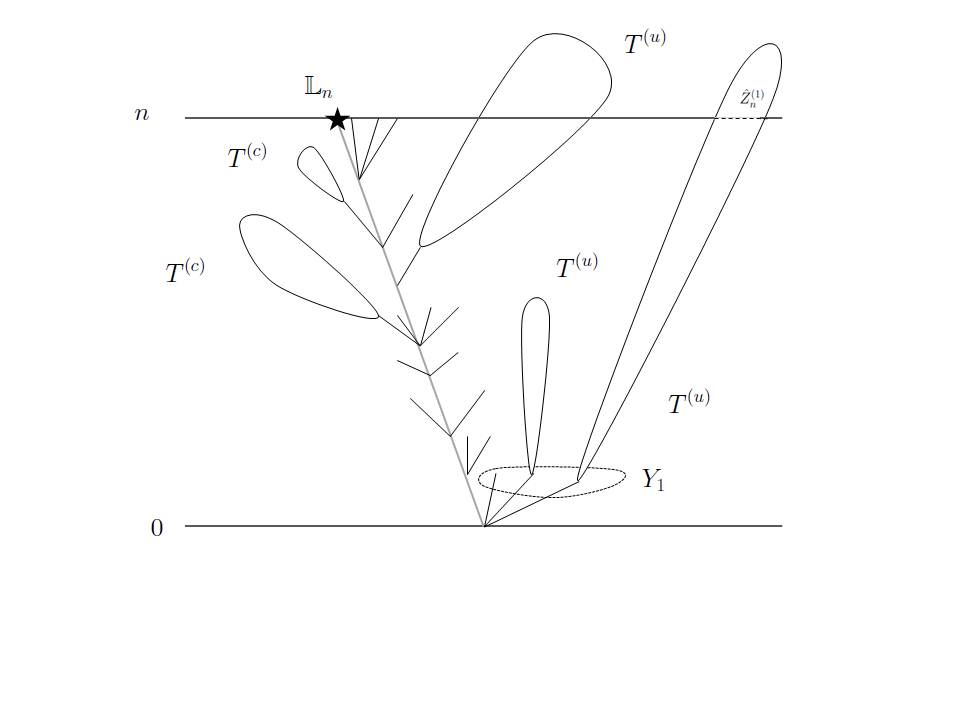}
\end{center}
%\end{picture}
\caption{\label{geiger1}  Geiger construction with $T^{(c)}$ trees conditioned on extinction and $T^{(u)}$ unconditioned trees.} 
\end{figure}

Let us give more details of this construction. We assume that the process starts with $Z_0=z$ and denote $\mathsf{P}_z(\cdot):=\mathsf{P}(\cdot|Z_0=z)$. 
%To easily apply all the following results, 
%we fictively start the process in generation $-1$ with one individual 
%and introduce the deterministic 'distribution' $q_0(z):=1$, $f_0(s)=s^z$. 
We define for $0 \leq k <n$,
$$\mathsf{p}_{k,n}:= \mathsf{P}(Z_n>0 \ \vert \ Z_k=1)=1-f_{k,n}(0), \qquad \mathsf{p}_{n,n}:=1.$$
We can specify  the distribution of the number $Y_k$ of unconditioned trees founded by the ancestral line in generation $k$, at the right of $\mathbb{L}_k$.
In generation $0$, for $0\leq i\leq z-1$, 
\begin{align}
 \mathsf{P}_z(Y_0=i |Z_n>0)&:= \mathsf{P}_{z} (\mathbb{L}_0 =z-i \ \vert \ Z_n >0) =   \frac{\mathsf P(Z_n >0 \ | Z_0=1) \mathsf{P}(Z_n=0 | Z_0=1 )^{z-i-1}}{\mathsf{P}(Z_n >0 \ | Z_{0}=z)}\nonumber\\
&= \frac{1-f_{0,n}(0)}{\mathsf{P}_z(Z_n>0)} f_{0,n}(0)^{z-i-1}.\label{subtreegen1}
\end{align}
More generally, for all $1\leq k\leq n$ and $i\geq 0$, (\ref{eqcoupl}) yields
 \begin{align}
 \mathsf{P}(Y_k=i |Z_n>0)&:=
\mathsf{P}(Z_k-\mathbb{L}_k=i|Z_n>0, Z_{k-1}=1)\nonumber \\
&= \sum_{j=i+1}^\infty \mathsf{P}(Z_k=j, \mathbb{L}_k=j-i|Z_n>0, Z_{k-1}=1) \nonumber \\
%&= \sum_{j=i+1}^\infty\mathsf{P}(Z_k=j|Z_{k-1}=1)  \frac{\mathsf{P}(Z_n>0|Z_k=1)\mathsf{P}(Z_n=0|Z_k=j-i-1) }{\mathsf{P}(Z_n>0|Z_{k-1}=1) }\nonumber \\
%&= \sum_{j=i+1}^\infty q_k(j)\frac{\mathsf{p}_{k,n}f_{k,n}(0)^{j-i-1}}{\mathsf{p}_{k-1,n}} \nonumber \\
&= \frac{\mathsf{p}_{k,n} }{\mathsf{p}_{k-1,n}}\sum_{j=i+1}^\infty q_k(j) f_{k,n}(0)^{j-i-1}. \label{conddis2}
\end{align}
Finally, we note  that $f_{n,n}(0)=0$, thus for  $k=n$, we have $\mathsf{P}(Y_n=i |Z_n>0)=\tfrac{q_n(i+1)}{p_{n-1,n}}$.\medskip\\
%The individuals to the left of $\mathbb{L}_k$ give birth  to independent subtrees, conditioned on extinction 
%before generation $n$. The individuals to the right found independent unconditioned subtrees. 
Here, we do not require the full 
description of the conditioned tree since we are only interested in the population alive  at time $n$. Thus we do not have to consider the trees conditioned on extinction,
 which grow to the left of $\mathbb{L}$. We can  construct the population alive in generation $n$  using the i.i.d random variables  $\hat{Y}_0,\hat{Y}_1,\hat{Y}_2, \ldots, \hat{Y}_n$ whose distribution is specified by (\ref{subtreegen1}) and (\ref{conddis2}) :
$$\P( \hat{ Y}_k =i)= \mathsf{P}(Y_k=i |Z_n>0)$$
Let 
$(\hat{Z}_j^{(k)})_{j\geq 0}$ be  independent branching processes in varying environment which are distributed as $Z$ for $j >k$ and satisfy
$$ \hat{Z}_j^{(k)} := 0 \t{ for}\quad j<k, \qquad  \hat{Z}_k^{(k)}:= \hat{Y}_k.$$ 
More precisely, for all $0\leq k \leq n$ and $z_0, \cdots ,z_n \geq 0$, 
\Bea
&&\mathsf{P}(\hat{Z}_0^{(k)}=0, \cdots, \hat{Z}_{k-1}^{(k)}=0, \hat{Z}_k^{(k)}=z_k, \hat{Z}_{k+1}^{(k)}=z_{k+1} , \cdots,  \hat{Z}_{n}^{(k)}=z_{n}) \\
&& \qquad \qquad \qquad \qquad =\mathsf{P}(\hat{Y}_k=z_k) \mathsf{P}(Z_{k+1}=z_{k+1}, \cdots,  Z_n=z_n \ \vert \ Z_k=z_k).
\Eea
The  sizes of the independent subtrees  generated  by the ancestral line in generation $k$, 
which may survive until generation $n$, are given by $(\hat{Z}_j^{(k)})_{0\leq j\leq n}$, $0\leq k\leq n-1$. In particular,
\begin{align}
 \mathcal{L}(Z_n|Z_n>0) = \mathcal{L}(\hat{Z}_n^{(0)} + \ldots+\hat{Z}_n^{(n-1)}+\hat{Y}_n+1). \label{geigerdecomp}
\end{align}
\begin{lemma}\label{le_subtree1}
 The probability that all  subtrees  emerging before generation $n$ become extinct before generation $n$ is given for $z\geq 1$ by 
\begin{align}
 \mathsf{P}_z(\hat{Z}_n^{(0)} + \ldots+\hat{Z}_n^{(n-1)}=0)&= \prod_{k=0}^{n-1}\mathsf{P}_z(\hat{Z}_n^{(k)}=0) =\frac{\mathsf{p}_{n-1,n}}{\mathsf{p}_{-1,n}} \prod_{k=0}^{n-1}  f'_k(f_{k,n}(0)), \nonumber 
\end{align}
where we use the following convenient notation $f_0(s):=s^z$, $p_{-1,n}:=\mathsf{P}_z(Z_n>0)$.
\end{lemma}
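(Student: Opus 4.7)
The proof decomposes into three ingredients: independence of the subtree sizes, a single-generation computation yielding the derivative $f_k'(f_{k,n}(0))$, and a telescoping over $k$.

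\textbf{Step 1: Independence.} The first equality is immediate from the construction of the family $(\hat{Z}_j^{(k)})_{0\le j\le n}$: the subtrees emerging at distinct generations $k$ along the ancestral line are built from independent copies of the BPVE, each one initialized independently via $\hat Y_k$. Hence
$$\mathsf{P}_z\bigl(\hat{Z}_n^{(0)}+\cdots+\hat{Z}_n^{(n-1)}=0\bigr)=\prod_{k=0}^{n-1}\mathsf{P}_z\bigl(\hat{Z}_n^{(k)}=0\bigr).$$

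\textbf{Step 2: Single generation.} For each $k$, condition on $\hat Y_k=i$. Given the subtree founder sizes, each of the $i$ independent subtrees survives until generation $n$ with probability $1-f_{k,n}(0)$, so
$$\mathsf{P}\bigl(\hat{Z}_n^{(k)}=0\bigm|\hat Y_k=i\bigr)=f_{k,n}(0)^{i}.$$
For $k\ge 1$, plugging (\ref{conddis2}) in and swapping the order of summation,
\begin{align*}
\mathsf{P}\bigl(\hat{Z}_n^{(k)}=0\bigr)
&=\frac{\mathsf{p}_{k,n}}{\mathsf{p}_{k-1,n}}\sum_{i\ge 0}f_{k,n}(0)^{i}\sum_{j\ge i+1}q_k(j)\,f_{k,n}(0)^{j-i-1}\\
&=\frac{\mathsf{p}_{k,n}}{\mathsf{p}_{k-1,n}}\sum_{j\ge 1}j\,q_k(j)\,f_{k,n}(0)^{j-1}
=\frac{\mathsf{p}_{k,n}}{\mathsf{p}_{k-1,n}}\,f_k'\bigl(f_{k,n}(0)\bigr).
\end{align*}
For $k=0$, (\ref{subtreegen1}) gives $\mathsf{P}_z(\hat{Y}_0=i)=\tfrac{\mathsf{p}_{0,n}}{\mathsf{p}_{-1,n}}f_{0,n}(0)^{z-1-i}$ for $0\le i\le z-1$, so
$$\mathsf{P}_z\bigl(\hat{Z}_n^{(0)}=0\bigr)=\frac{\mathsf{p}_{0,n}}{\mathsf{p}_{-1,n}}\sum_{i=0}^{z-1}f_{0,n}(0)^{z-1}=\frac{\mathsf{p}_{0,n}}{\mathsf{p}_{-1,n}}\,z\,f_{0,n}(0)^{z-1},$$
which is exactly $\tfrac{\mathsf{p}_{0,n}}{\mathsf{p}_{-1,n}}f_0'(f_{0,n}(0))$ under the convention $f_0(s)=s^z$.

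\textbf{Step 3: Telescoping.} Taking the product over $0\le k\le n-1$, the ratios $\mathsf{p}_{k,n}/\mathsf{p}_{k-1,n}$ collapse to $\mathsf{p}_{n-1,n}/\mathsf{p}_{-1,n}$, yielding
$$\prod_{k=0}^{n-1}\mathsf{P}_z\bigl(\hat{Z}_n^{(k)}=0\bigr)=\frac{\mathsf{p}_{n-1,n}}{\mathsf{p}_{-1,n}}\prod_{k=0}^{n-1}f_k'\bigl(f_{k,n}(0)\bigr).$$

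The only slightly delicate step is the swap of summations for $k\ge 1$, which is justified by nonnegativity (Tonelli); everything else is bookkeeping. The unified treatment of the initial generation $k=0$ via the convention $f_0(s)=s^z$ is what makes the formula look symmetric, and is worth verifying separately as done above.
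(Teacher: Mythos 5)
Your proof is correct and follows essentially the same route as the paper's: condition on $\hat Y_k$, use (\ref{conddis2}) and (\ref{subtreegen1}) to compute $\mathsf{P}(\hat{Z}_n^{(k)}=0)=\frac{\mathsf{p}_{k,n}}{\mathsf{p}_{k-1,n}} f_k'(f_{k,n}(0))$ for each $k$ (including $k=0$ via the convention $f_0(s)=s^z$), then invoke independence and telescope the ratios. The only cosmetic difference is that you explicitly flag the Tonelli justification for the sum swap, which the paper leaves implicit.
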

\begin{proof}
First, we compute  the probability that 
the subtree generated by the ancestral line in generation $k$ does not survive until generation $n$, i.e. $\{\hat{Z}_n^{(k)}=0\}$. By (\ref{conddis2}), for $k\geq 1$,
\bea
 \mathsf{P}(\hat{Z}_n^{(k)}=0)&=& \sum_{i=0}^\infty  \mathsf{P}_z(\hat{Y}_k=i ) \mathsf{P}(Z_n=0|Z_{k}=i)\nonumber\\
&=& \frac{\mathsf{p}_{k,n}}{\mathsf{p}_{k-1,n}} \sum_{i=0}^\infty \sum_{j=i+1}^\infty q_k(j)f_{k,n}(0)^{j-i-1}\cdot f_{k,n}(0)^{i}\nonumber\\
&=&\frac{\mathsf{p}_{k,n}}{\mathsf{p}_{k-1,n}} \sum_{i=0}^\infty \sum_{j=i+1}^\infty q_k(j)f_{k,n}(0)^{j-1}\nonumber \\
&=& \frac{\mathsf{p}_{k,n}}{ \mathsf{p}_{k-1,n}}\sum_{j=1}^\infty j  q_k(j) f_{k,n}(0)^{j-1}\nonumber \\
&=& \frac{\mathsf{p}_{k,n}}{ \mathsf{p}_{k-1,n}} f'_k(f_{k,n}(0)). \label{equachap} 
\eea
Similarly,  we get from (\ref{subtreegen1}) that
\bea 
 \mathsf{P}_z(\hat{Z}_n^{(0)}=0)&=& \sum_{i=0}^{z-1}  \mathsf{P}_z(Y_0=i|Z_n>0 ) \mathsf{P}(Z_n=0|Z_{0}=i)\nonumber\\
&=& \sum_{i=0}^{z-1}  \frac{1-f_{0,n}(0)}{\mathsf{P}_z(Z_n>0)} f_{0,n}(0)^{z-i-1} f_{0,n}(0)^i \nonumber\\
&=&  \frac{\mathsf{p}_{0,n}}{ \mathsf{p}_{-1,n}} zf_{0,n}(0)^{z-1}=\frac{\mathsf{p}_{0,n}}{ \mathsf{p}_{-1,n}} f'_0(f_{0,n}(0)) \nonumber
\eea
with the convention $f_0(s)=s^z$. Adding that the subtrees given by $(\hat{Z}_j^{(k)})_{j\geq 0}$ are independent, a telescope argument yields the claim. 
\end{proof}
For the next lemma, we introduce  the last generation before $n$ when the environment allows extinction :
$$\kappa_n:=\sup\{1\leq k\leq n \ : \  q_k(0)>0\}, \qquad (\sup \varnothing =0).$$
Note that $\kappa_n$ only depends on the environment up to generation $n$.
\begin{lemma}\label{le_subtree2}
Let $z_0$ be the smallest element in $\mathcal{I}$. Then, 
\begin{align}
 \mathsf{P}_{z_0}(Z_n=z_0 \ \vert \ Z_n>0)= \frac{q_{\kappa_n}(z_0)}{\mathsf{p}_{\kappa_n-1,\kappa_n}} \times \prod_{k=0}^{\kappa_n-1}\frac{\mathsf{p}_{k,\kappa_n}}{\mathsf{p}_{k-1,\kappa_n}}  f'_k(f_{k,\kappa_n}(0))  \times \prod_{j=\kappa_n+1}^{n} q_j(1)^{z_0}, \nonumber
\end{align}
where we recall  the following convenient notation $f_0(s)=s^z$, $p_{-1,n}=\mathsf{P}_z(Z_n>0)$.
\end{lemma}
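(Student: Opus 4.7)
The plan is to split the trajectory at the last possibly-fertile generation $\kappa_n$: handle the deterministic behaviour on $\{\kappa_n+1,\ldots,n\}$ by the Markov property, and reduce the computation of $\mathsf{P}_{z_0}(Z_{\kappa_n}=z_0\mid Z_{\kappa_n}>0)$ to a single Geiger configuration by exploiting the minimality of $z_0$. The structural input that drives everything is the following consequence of $z_0=\min\mathcal{I}$: since $\P(Q(j)>0,Q(0)>0)=0$ for every $1\le j\le z_0-1$, almost surely in the environment, whenever $q_k(0)>0$ we have $q_k(j)=0$ for all $j=1,\ldots,z_0-1$.

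First I would deal with the tail of the trajectory. Because $q_j(0)=0$ for every $\kappa_n<j\le n$, the process $(Z_j)_{j\ge\kappa_n}$ is non-decreasing, whence $\{Z_n>0\}=\{Z_{\kappa_n}>0\}$. Applying the structural remark to $q_{\kappa_n}$ shows that each individual in generation $\kappa_n-1$ produces either $0$ or at least $z_0$ children, so $Z_{\kappa_n}\in\{0\}\cup\{z_0,z_0+1,\ldots\}$. Combined with $z_0=Z_n\ge Z_{\kappa_n}\ge 1$, this forces $Z_{\kappa_n}=z_0$ and $Z_j=z_0$ for every $\kappa_n\le j\le n$. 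The Markov property then gives
\[
\mathsf{P}_{z_0}(Z_n=z_0\mid Z_n>0)=\mathsf{P}_{z_0}(Z_{\kappa_n}=z_0\mid Z_{\kappa_n}>0)\cdot\prod_{j=\kappa_n+1}^{n}q_j(1)^{z_0},
\]
since conditional on $Z_{\kappa_n}=z_0$ the event $\{Z_j=z_0\ \forall j>\kappa_n\}$ requires each of the $z_0$ individuals to produce exactly one child in every subsequent generation.

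It then remains to identify $\mathsf{P}_{z_0}(Z_{\kappa_n}=z_0\mid Z_{\kappa_n}>0)$ with the two other factors in the statement. For this I would apply the Geiger decomposition (\ref{geigerdecomp}) at generation $\kappa_n$: conditional on $Z_{\kappa_n}>0$, one has $Z_{\kappa_n}\overset{d}{=}\sum_{k=0}^{\kappa_n-1}\hat{Z}_{\kappa_n}^{(k)}+\hat{Y}_{\kappa_n}+1$. Because $f_{\kappa_n,n}(0)=0$ (all $f_j$ with $j>\kappa_n$ vanish at $0$), formula (\ref{conddis2}) shows that $\hat{Y}_{\kappa_n}$ has law $q_{\kappa_n}(i+1)/\mathsf{p}_{\kappa_n-1,\kappa_n}$, and the structural remark restricts its support to $\{z_0-1,z_0,z_0+1,\ldots\}$. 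Consequently the event $\{Z_{\kappa_n}=z_0\}$ is equivalent to $\hat{Y}_{\kappa_n}=z_0-1$ together with $\hat{Z}_{\kappa_n}^{(k)}=0$ for every $0\le k\le\kappa_n-1$. Using the independence of $\hat{Y}_{\kappa_n}$ from the subtrees $\hat{Z}^{(k)}$ built into the Geiger construction, the joint probability factors as $q_{\kappa_n}(z_0)/\mathsf{p}_{\kappa_n-1,\kappa_n}$ multiplied by the probability that all subtrees are extinct at $\kappa_n$, which is exactly the content of Lemma \ref{le_subtree1} applied with $n$ replaced by $\kappa_n$. Combining the three factors yields the claim; the main subtlety is the minimality step, which collapses what would otherwise be a sum over many Geiger configurations into a single explicit term.
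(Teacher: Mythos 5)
Your proof is correct and follows essentially the same route as the paper's: both exploit the minimality of $z_0$ to force every subtree to the right of the spine to die out by generation $\kappa_n$, invoke Lemma \ref{le_subtree1} for the resulting extinction probability, and read off the deterministic product $\prod_{j=\kappa_n+1}^{n}q_j(1)^{z_0}$ for the tail. Your explicit Markov split at $\kappa_n$ before applying Geiger's decomposition is a slightly cleaner organization of the same computation; the only thing the paper adds is a separate (trivial) treatment of the case $\kappa_n=0$, which your argument covers implicitly via the conventions $f_0(s)=s^{z_0}$ and $\mathsf{p}_{-1,0}=1$.
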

\begin{proof}
By definition of $\mathcal{I}$ and $z_0$, $q(0)>0$ implies 
$q(k)=0$ for every  $1\leq k< z_0$. 
We first deal with the case $\kappa_n>0$. Then,
$$q_{\kappa_n}(0)>0, \quad q_{\kappa_n}(k)=0 \ \t{if} \ 1\leq k < z_0 \ ;  \qquad q_{\kappa_n+1}(0)=\cdots =q_{n}(0)=0.$$
In particular the number of individuals in generation $\kappa_n$ is at least $z_0$ times the number of individuals in generation $\kappa_{n}-1$  
who leave at least one  offspring in generation $\kappa_n$. Moreover, as extinction is not possible after generation $\kappa_n$, it holds that $Z_{\kappa_n}\leq Z_{\kappa_n+1}\leq \cdots\leq Z_{n}$. \\
Let us consider the event  $Z_n=z_0>0$. Then $Z_{\kappa_n-1}>0$ and  $Z_{\kappa_n} \geq z_0$. So  $Z_{\kappa_n}=Z_{\kappa_n+1}=\cdots=Z_{n}=z_0$ and 
only a single  individual in generation $\kappa_n-1$ leaves  one offspring (or more) in generation $\kappa_n$. This individual lives on the ancestral line. Thus all the subtrees to the right of the ancestral line which are born before generation $\kappa_n$ have become 
extinct before 
generation $\kappa_n$, i.e. $\hat{Z}_{\kappa_n}^{(0)}= \ldots=\hat{Z}_{\kappa_n}^{(\kappa_n-1)}=0$. In generation $\kappa_n-1$, the individual on the
ancestral line has $z_0$ offsprings and $\hat{Y}_{\kappa_n}=z_0-1$. After generation $\kappa_n$, all the individuals must leave exactly one offspring  to  keep the population constant until generation $n$, since 
$q_{\kappa_n+1}(0)=\cdots =q_{n}(0)=0$. This probability is then given by $q_j(1)^{z_0}$ in generation $j>\kappa_n$. 
Moreover, $(\ref{conddis2})$ simplifies to $\mathsf{P}(\hat{Y}_{\kappa_n}=z_0-1) = q_{\kappa_n}(z_0)/\mathsf{p}_{\kappa_n-1,\kappa_n}$.
\begin{figure}[here]
\setlength{\unitlength}{1cm}
%\begin{picture}(16,12)
%\put(0,0){\framebox(16,12)}
%\put(0,11){\includegraphics[angle=270,scale=0.55]{images/ladder.ps}}
\begin{center}
\includegraphics[angle=0,width=0.5\textwidth]{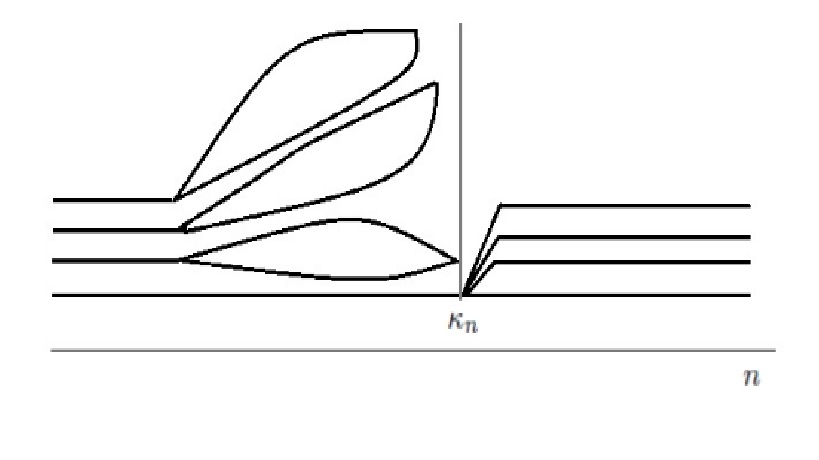}
\end{center}
%\end{picture}
\caption{\label{geiger2} Illustration of the proof of Lemma \ref{le_subtree2}.} 
\end{figure}
Using the previous lemma, it can be written as follows:
\begin{align*}
& \mathsf{P}_{z_0}(Z_n=z_0) \\
%&= \mathsf{P}_{z_0}(\hat{Z}_n^{(0)} + \ldots+\hat{Z}_n^{(n-1)}+Y_n+1=z_0)\\
& \qquad  = \mathsf{P}_{z_0}(\hat{Z}_{\kappa_n}^{(0)}= \ldots=\hat{Z}_{\kappa_n}^{(\kappa_n-1)}=0) \mathsf{P}(\hat{Y}_{\kappa_n}=z_0-1)\mathsf{P}_{z_0}(\hat{Z}_{n}^{(\kappa_n)} + \ldots+\hat{Z}_{n}^{(n-1)}+Y_n+1=z_0) \\
%& \qquad  = \mathsf{P}_{z_0}(\hat{Z}_{\kappa_n}^{(0)}= \ldots=\hat{Z}_{\kappa_n}^{(\kappa_n-1)}=0) \frac{q_{\kappa_n}(z_0)}{\mathsf{p}_{\kappa_n-1,\kappa_n}}\mathsf{P}_{z_0}(\hat{Z}_{n}^{(\kappa_n)} + \ldots+\hat{Z}_{n}^{(n-1)}+Y_n+1=z_0) \\
& \qquad =   \left[\prod_{k=0}^{\kappa_n-1}\frac{\mathsf{p}_{k,n}}{\mathsf{p}_{k-1,n}}  f'_k(f_{k,\kappa_n}(0)) \right]\frac{q_{\kappa_n}(z_0)}{\mathsf{p}_{\kappa_n-1,\kappa_n}}\mathsf{P}_{z_0}(\hat{Z}_{n}^{(\kappa_n)} + \ldots+\hat{Z}_{n}^{(n-1)}+Y_n+1=z_0) \\
& \qquad =  \frac{q_{\kappa_n}(z_0)}{\mathsf{p}_{\kappa_n-1,\kappa_n}}\left[\prod_{k=0}^{\kappa_n-1}\frac{\mathsf{p}_{k,n}}{\mathsf{p}_{k-1,n}}  f'_k(f_{k,\kappa_n}(0)) \right]\prod_{j=\kappa_n+1}^{n} q_j(1)^{z_0}. 
\end{align*}
Recall that after generation $\kappa_n$, each individual has at least one offspring and thus $\mathsf{p}_{j,n}=\mathsf{p}_{j,\kappa_n}$ for any $j<\kappa_n$. This  ends up the proof in the case $\kappa_n>0$. The case when 
 $\kappa_n=0$ is easier. Indeed,
$$ \mathsf{P}_{z_0}(Z_n=z_0)= \mathsf{P}_{z_0}(Z_1=\cdots=Z_n=z_0)= \prod_{j=1}^{n} q_j(1)^{z_0}$$ since $q_{\kappa_n+1}(0)=\cdots =q_{n}(0)=0$ and  $Z$ is nondecreasing until generation $n$.  
\end{proof}

\section{Proof of Theorem \ref{theo_rho1}  : the probability of staying positive but bounded}\label{proofs}
In this section, we  prove Theorem \ref{theo_rho1} with the help of two  lemmas. The first lemma establishes the existence of a proper 'common' limit.
\begin{lemma}\label{rho_1}
 Assume that $z\geq 1$ satisfies $\mathbb{P}(Q(0)>0, Q(z)>0)>0$. \\
Then for all $k,j\in Cl(\{z\})$,
 the following limits exist in $[0,\infty)$ 
and coincide  
\begin{align} 
\lim_{n\rightarrow\infty} \tfrac{1}{n} \log \mathbb{P}_k(Z_n=j)= \lim_{n\rightarrow\infty} \tfrac{1}{n}\log\mathbb{P}_{z}(Z_n=z). \nonumber
\end{align}
Moreover, for every sequence $k_n$ such that $k_n \geq z$ for $n$ large enough and $k_n/n\rightarrow 0$ as $n\rightarrow \infty$,
\begin{align*}
 \lim_{n\rightarrow\infty} \tfrac{1}{n} \log \mathbb{P}_{z}(Z_n=z)= \lim_{n\rightarrow\infty} \tfrac{1}{n}\log\mathbb{P}_{z}(1\leq Z_n\leq k_n).
\end{align*}
\end{lemma}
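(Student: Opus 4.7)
The plan is to first establish existence and finiteness of the exponential rate for the return probability $\mathbb{P}_z(Z_n=z)$ via Fekete's lemma, then transfer this rate to $\mathbb{P}_k(Z_n=j)$ for arbitrary $k,j\in Cl(\{z\})$ by a two-sided comparison, and finally upgrade from the point event $\{Z_n=z\}$ to the set event $\{1 \leq Z_n \leq k_n\}$.

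\emph{Existence of the rate.} The Markov property combined with the i.i.d.\ environment gives, for all $n,m\geq 1$, $\mathbb{P}_z(Z_{n+m}=z) \geq \mathbb{P}_z(Z_n=z)\mathbb{P}_z(Z_m=z)$, so $a_n := -\log \mathbb{P}_z(Z_n=z)$ is subadditive and Fekete's lemma yields $a_n/n \to \inf_n a_n/n \in [0,\infty]$. The hypothesis $\mathbb{P}(Q(0)>0, Q(z)>0)>0$ forces $\mathbb{P}_z(Z_1=z)>0$: in any environment with $Q(0),Q(z)>0$ the event that exactly one founder produces $z$ offspring while the other $z-1$ produce zero has conditional probability $zQ(z)Q(0)^{z-1}>0$. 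Supermultiplicativity then gives $a_n \leq n a_1 < \infty$, so the limit $-\varrho := -\lim_n a_n/n$ lies in $(-\infty,0]$.

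\emph{First equality.} Pick $n_1, n_2$ such that $\mathbb{P}_z(Z_{n_1}=k)>0$ and $\mathbb{P}_z(Z_{n_2}=j)>0$; these exist by definition of $Cl(\{z\})$. The same one-individual-survives argument gives $\mathbb{P}_k(Z_1=z)>0$ and $\mathbb{P}_j(Z_1=z)>0$ for every $k, j\geq 1$. Two applications of the Markov property produce the sandwich
\begin{align*}
\mathbb{P}_k(Z_{1+n+n_2}=j) &\geq \mathbb{P}_k(Z_1=z)\,\mathbb{P}_z(Z_n=z)\,\mathbb{P}_z(Z_{n_2}=j),\\
\mathbb{P}_z(Z_{n_1+n+1}=z) &\geq \mathbb{P}_z(Z_{n_1}=k)\,\mathbb{P}_k(Z_n=j)\,\mathbb{P}_j(Z_1=z).
\end{align*}
Taking $\tfrac{1}{n}\log$ and sending $n\to\infty$ makes all $n$-independent factors vanish, so $\lim_n \tfrac{1}{n}\log \mathbb{P}_k(Z_n=j) = -\varrho$.

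\emph{Second equality and main obstacle.} Once $k_n \geq z$, the inclusion $\{Z_n=z\}\subset\{1\leq Z_n\leq k_n\}$ immediately gives $\liminf_n \tfrac{1}{n}\log \mathbb{P}_z(1\leq Z_n\leq k_n) \geq -\varrho$. For the reverse, I would choose $\epsilon\in(0,1)$ with $p_\epsilon := \mathbb{P}(Q(0)>\epsilon, Q(z)>\epsilon) > 0$; then $\mathbb{P}_j(Z_1=z) \geq p_\epsilon\epsilon^j$ uniformly in $j\geq 1$ by the same combinatorial argument. Since $\epsilon^j$ is decreasing in $j$, the Markov property yields
$$\mathbb{P}_z(Z_{n+1}=z) \geq \sum_{j=1}^{k_n} \mathbb{P}_z(Z_n=j)\,\mathbb{P}_j(Z_1=z) \geq p_\epsilon\,\epsilon^{k_n}\,\mathbb{P}_z(1\leq Z_n\leq k_n).$$
Taking $\tfrac{1}{n}\log$, the hypothesis $k_n/n\to 0$ kills the correction $\tfrac{k_n}{n}\log\epsilon\to 0$, so $\limsup_n \tfrac{1}{n}\log \mathbb{P}_z(1\leq Z_n\leq k_n) \leq -\varrho$. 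The main obstacle is precisely this exponential-in-$j$ lower bound on $\mathbb{P}_j(Z_1=z)$: a merely sub-exponential growth hypothesis on $k_n$, as in the Galton--Watson case (\ref{asGW}), would not be absorbed by $\epsilon^{k_n}$, and one would instead need to reduce $j$ to $z$ across several generations through intermediate states rather than in a single step.
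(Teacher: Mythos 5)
Your proof is correct and follows essentially the same route as the paper: subadditivity of $-\log\mathbb{P}_z(Z_n=z)$ via the Markov property and i.i.d.\ environments, the same two-sided sandwich through $Cl(\{z\})$ using $\mathbb{P}_j(Z_1=z)>0$, and the same uniform lower bound $\mathbb{P}_j(Z_1=z)\geq p_\varepsilon\varepsilon^{j}$ on a favorable environment set to absorb the sum over $1\leq j\leq k_n$ under $k_n/n\to 0$. Your closing remark on why only $k_n=o(n)$ (rather than sub-exponential) is attainable by this one-step reduction is an accurate observation about the limitation of the argument, not a gap.
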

\textsl{Proof.}
Note that for every $k\geq 1$, $\mathbb{P}_{k}(Z_1=z)>0$ since 
$$\mathbb{P}_{k}\big(Z_1=z   \ \vert \ Q_1\big)\geq Q_1(0)^{k-1} Q_1(z), \qquad
\mathbb{P}(Q(0)>0, Q(z)>0)>0.$$
 We know that by Markov property, for all $m,n\geq 1$,
\begin{align}
 \mathbb{P}_{z}(Z_{n+m}=z)& 
\geq \mathbb{P}_{z}(Z_{n}=z) \mathbb{P}_{z}(Z_m=z). \label{help01}
\end{align}
Adding that $\mathbb{P}_{z}(Z_{1}=z)>0$, we obtain that  the sequence $(a_n)_{n\in\mathbb{N}}$ defined by $a_n:=-\log \mathbb{P}_{z}(Z_n=z)$ is finite and subadditive. Then Fekete's lemma ensures that
$\lim_{n\rightarrow\infty}  a_n/n$ exists and belongs to $[0,\infty)$. 
 Next, if $j,k \in Cl(\{z\})$, there exist $l,m \geq 0$ such that
 $\mathbb{P}_{z}(Z_{l}=j)>0$ and $\mathbb{P}_{z}(Z_{m}=k)>0.$  
We get
$$ \mathbb{P}_k(Z_{n+l+1}=j)\geq \P_k(Z_1=z)\P_{z}(Z_{n}=z)\P_{z}(Z_l=j)$$
and
$$\P_{z}(Z_{m+n+1}=z) \geq\P_{z}(Z_m=k)\mathbb{P}_k(Z_{n}=j)\P_j(Z_1=z).$$
Adding that  $\P_j(Z_1=z)>0$, we obtain 
$$\liminf_{n\rightarrow\infty} \tfrac{1}{n} \log \mathbb{P}_k(Z_n=j)\geq  \lim_{n\rightarrow\infty} \tfrac{1}{n}\log\mathbb{P}_{z}(Z_n=z)\geq \limsup_{n\rightarrow\infty} \tfrac{1}{n} \log \mathbb{P}_k(Z_n=j),$$
which yields the first result.\medskip\\
For the second part of the lemma, we first  observe that  $\P_{z}(Z_n=z) \leq \P_{z}(1\leq Z_n\leq k_n)$ for $n$ large enough. To prove the converse inequality, we
define for $\varepsilon>0$ the set
\begin{align*}
  \mathcal{A}_{\varepsilon}:= \{ q\in\Delta | q(0)>\varepsilon,\, q(z)>\varepsilon\} .
\end{align*}
According to the definition of $\mathcal{I}$ and the assumption, $\mathbb{P}(Q\in\mathcal{A}_{\varepsilon})>0$ if $\varepsilon$ is chosen small enough. Thus, we get  
\begin{align*}
 \P_{z}(Z_n=z)&\geq \P_{z}(1\leq Z_{n-1} \leq k_n) \min_{1\leq j\leq k_n} \P_{j}(Z_1=z) \\
&\geq \P_{z}(1\leq Z_{n-1} \leq k_n) \mathbb{P}(Q\in\mathcal{A}_{\varepsilon})  \min_{1\leq j\leq k_n} \E[\P_1(Z_1=z)\P_1(Z_1=0|Q)^{j-1} |Q\in\mathcal{A}_{\varepsilon}] \\
&\geq \P_{z}(1\leq Z_{n-1} \leq k_n) \mathbb{P}(Q\in\mathcal{A}_{\varepsilon})\  \varepsilon^{k_n}.
\end{align*}
Using the logarithm of this expression and letting $n\rightarrow \infty$ yields 
\begin{align*}
 \lim_{n\rightarrow\infty} \tfrac 1n \log \P_{z}(Z_n=z) &\geq \limsup_{n\rightarrow\infty} \left( \tfrac 1n \log \P_{z}(1\leq Z_{n-1} \leq k_n) + \log(\varepsilon) \tfrac{k_n}{n} \right).
\end{align*}
Adding that $k_n=o(n)$ by assumption gives the claim.\qed\medskip\\

Now, we prove a representation of the limit $\rho$ in terms of generating functions. It will be useful in the rest of the paper.  
\begin{lemma}\label{lemchar}
 Assume that $\mathbb{P}_1(Z_1=0)>0$. Then for all $i,j \in Cl(\mathcal{I})$,
$$
 \lim_{n\rightarrow\infty} \tfrac{1}{n} \log \mathbb{P}_{i}(Z_n=j)
 =\lim_{n\rightarrow\infty} \tfrac{1}{n} \log \E\Big[Q_{n}(z_0)f_{0,n}(0)^{z_0-1} \prod_{i=1}^{n-1}f'_i\big(f_{i,n}(0)\big)\Big],$$
where $z_0$ is the smallest element in $\mathcal{I}$.
\end{lemma}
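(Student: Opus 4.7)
The plan is to first reduce, via Lemma \ref{rho_1}, to the identity $\lim_n\tfrac{1}{n}\log E_n=\lim_n\tfrac{1}{n}\log\P_{z_0}(Z_n=z_0)=:-\rho$, where $E_n:=\E\big[Q_n(z_0)f_{0,n}(0)^{z_0-1}\prod_{i=1}^{n-1}f'_i(f_{i,n}(0))\big]$. The reduction is nearly free: the pointwise bound $Q^{*z_0}(z)\geq z_0 Q(z)Q(0)^{z_0-1}$ gives $\P_{z_0}(Z_1=z)>0$ for every $z\in\mathcal{I}$, so $Cl(\mathcal{I})\subseteq Cl(\{z_0\})$, and since $z_0\in\mathcal{I}$, Lemma \ref{rho_1} identifies the left hand side of Lemma \ref{lemchar} with $\lim_n\tfrac{1}{n}\log\P_{z_0}(Z_n=z_0)=-\rho$. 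The remaining work is to sandwich $E_n$ between two quantities sharing this rate.

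For $\limsup_n\tfrac{1}{n}\log E_n\leq-\rho$, I would isolate inside the Geiger construction the single-spine scenario in which (i) every non-spine subtree dies out before generation $n$ and (ii) the spine individual has exactly $z_0$ offspring at generation $n$. Event (i) has quenched probability $\tfrac{\mathsf{p}_{n-1,n}}{\mathsf{p}_{-1,n}}\prod_{k=0}^{n-1}f'_k(f_{k,n}(0))$ by Lemma \ref{le_subtree1}, while the conditionally independent event (ii) has probability $q_n(z_0)/\mathsf{p}_{n-1,n}$ by the identity stated just after (\ref{conddis2}). Multiplying and using the convention $f_0(s)=s^{z_0}$ collapses the survival factor and yields the quenched bound $\mathsf{P}_{z_0}(Z_n=z_0)\geq z_0\, q_n(z_0)f_{0,n}(0)^{z_0-1}\prod_{k=1}^{n-1}f'_k(f_{k,n}(0))$; averaging over the environment gives $\P_{z_0}(Z_n=z_0)\geq z_0 E_n$ and hence the desired upper bound on the rate of $E_n$.

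For the matching lower bound, I would observe that on the environmental event $A_n:=\{Q_n(0)>0\}=\{\kappa_n=n\}$ the trailing factor $\prod_{j=\kappa_n+1}^n q_j(1)^{z_0}$ in Lemma \ref{le_subtree2} is empty, so that lemma becomes the \emph{equality} $\mathsf{P}_{z_0}(Z_n=z_0)=z_0 q_n(z_0)f_{0,n}(0)^{z_0-1}\prod_{k=1}^{n-1}f'_k(f_{k,n}(0))$ on $A_n$. Averaging and bounding $\mathbf{1}_{A_n}\leq 1$ gives the reverse sandwich $E_n\geq z_0^{-1}\P_{z_0}(Z_n=z_0,A_n)$. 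It remains to show that conditioning on $A_n$ costs at most a constant in exponential rate, which I would achieve by the one-step comparison $\P_{z_0}(Z_{n+1}=z_0,A_{n+1})\geq\gamma'\,\P_{z_0}(Z_n=z_0)$ with $\gamma':=z_0\,\E[Q(z_0)Q(0)^{z_0-1}\mathbf{1}_{Q(0)>0}]$: given $Z_n=z_0$ and an independent $Q_{n+1}$ with $Q_{n+1}(0)>0$, one realises $Z_{n+1}=z_0$ by asking one of the $z_0$ parents to produce $z_0$ offspring and the rest to produce none, providing the pointwise lower bound $z_0 Q_{n+1}(z_0)Q_{n+1}(0)^{z_0-1}$ on $\P_{z_0}(Z_{n+1}=z_0\mid Z_n=z_0,Q_{n+1})$. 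Integrating against the independent $Q_{n+1}$ and noting $\gamma'>0$ by $z_0\in\mathcal{I}$ gives $E_{n+1}\geq(\gamma'/z_0)\P_{z_0}(Z_n=z_0)$, hence $\liminf_n\tfrac{1}{n}\log E_n\geq-\rho$. The main obstacle is this last step: the sandwich inequalities are direct bookkeeping from the Geiger formulas, but promoting the $A_n$-restricted inequality to a statement about rates relies on being able to force a fresh extinction-compatible environment admitting a $z_0\to z_0$ transition, which is precisely the content of $z_0\in\mathcal{I}$.
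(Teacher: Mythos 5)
Your proof is correct, and for the crucial reverse inequality it takes a genuinely different and arguably cleaner route than the paper's. The direction $\P_{z_0}(Z_n=z_0)\geq z_0 E_n$ is the same: both isolate the single-spine scenario via the Geiger decomposition, Lemma \ref{le_subtree1}, and the identity $\mathsf P(\hat Y_n=z_0-1\mid Z_n>0)=q_n(z_0)/\mathsf p_{n-1,n}$. The divergence is in establishing $\liminf_n\tfrac 1n\log E_n\geq-\rho$ and the existence of the limit. The paper decomposes $\{Z_n=z_0\}$ over the random time $\kappa_n$, applies Lemma \ref{le_subtree2} for each value of $\kappa_n$, bounds $\P_{z_0}(Z_n=z_0)$ by the convolution $\sum_k z_0E_k\,\E[Q(1)^{z_0}]^{n-k-1}+\E[Q(1)^{z_0}]^n$, invokes the rate-of-a-sum identity, separately shows the $E_k$ term dominates the $\E[Q(1)^{z_0}]^n$ term, and finally runs a supermultiplicativity argument for $\phi_n=\P_{z_0}(\hat Z^{(0)}_n+\cdots+\hat Z^{(n-1)}_n=0,Z_n=z_0)$ to upgrade the $\limsup$ to an actual limit. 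You instead restrict to $A_n=\{\kappa_n=n\}=\{Q_n(0)>0\}$, where Lemma \ref{le_subtree2} collapses (after multiplying by $\mathsf p_{-1,n}$ and telescoping) to the exact quenched identity $\P_{z_0}(Z_n=z_0\mid\mathcal E)=z_0\,Q_n(z_0)f_{0,n}(0)^{z_0-1}\prod_{k=1}^{n-1}f'_k(f_{k,n}(0))$, deduce $z_0E_n\geq\P_{z_0}(Z_n=z_0,A_n)$ by nonnegativity off $A_n$, and then transfer the rate from $\P_{z_0}(Z_n=z_0)$ to $\P_{z_0}(Z_n=z_0,A_n)$ via the one-step bound $\P_{z_0}(Z_{n+1}=z_0,A_{n+1})\geq\gamma'\,\P_{z_0}(Z_n=z_0)$ with $\gamma'=z_0\E[Q(z_0)Q(0)^{z_0-1};Q(0)>0]>0$ by $z_0\in\mathcal I$. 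This avoids the sum decomposition entirely, and the two-sided sandwich $(\gamma'/z_0)\,\P_{z_0}(Z_{n-1}=z_0)\leq E_n\leq z_0^{-1}\P_{z_0}(Z_n=z_0)$ delivers existence of $\lim_n\tfrac 1n\log E_n$ directly from the known existence of $\lim_n\tfrac 1n\log\P_{z_0}(Z_n=z_0)$, with no separate subadditivity step. Your reduction $Cl(\mathcal I)\subseteq Cl(\{z_0\})$ via the pointwise bound $Q^{*z_0}(z)\geq z_0Q(z)Q(0)^{z_0-1}$ for $z\geq1$ is also correct and suffices to invoke Lemma \ref{rho_1}.
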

We note that $\mathbb{P}_1(Z_1=0)>0$ is equivalent to  $\mathbb{P}(Q(0)>0)>0$ and in view of Lemma \ref{rho_1}, we only have to prove the result for $k=j=z_0$, where $z_0$ is the smallest element in $\mathcal{I}$. Differentiation of the probability generating function of $Z_n$ yields 
the result for $z_0=1$. The generalization of the result for $z_0 \ne 1$ via higher order derivatives of generating functions appears to be complicated. Instead,
we use probabilistic arguments, involving the Geiger construction of the previous section.
\begin{proof}
First, the result is obvious  when $z_0=1 \in \mathcal{I}$ since
\begin{align}
 \mathbb{P}(Z_n=1|\mathcal{E}) = \frac{d}{ds} f_{0,n}(s) \big|_{s=0} = f'_n(0) \cdot\prod_{i=1}^{n-1} f'_{i}(f_{i,n}(0)). \nonumber
\end{align}
%Next let us assume that $1\notin\mathcal{I}$, i.e. $\mathbb{P}(Q(0)>0,Q(1)>0)=0$.
For the case $z_0>1$, we start by proving the lower bound.
Using (\ref{geigerdecomp}), Lemma \ref{le_subtree1},  (\ref{subtreegen1}) and  recalling that $\mathbb{P}_{z_0}(Z_n>0|\mathcal{E})=p_{-1,n}$,  we have 
\begin{align}
 \mathbb{P}_{z_0}(Z_n=z_0) &= \mathbb{E}\Big[\mathbb{P}_{z_0}(Z_n=z_0|Z_n>0,\mathcal{E})\mathbb{P}_{z_0}(Z_n>0|\mathcal{E}) \Big]\nonumber \\
&=\mathbb{E}\Big[\mathbb{P}_{z_0}(\hat{Z}_n^{(0)} + \cdots + \hat{Z}_{n}^{(n-1)}+\hat Y_n+1=z_0|\mathcal{E})\mathbb{P}_{z_0}(Z_n>0|\mathcal{E}) \Big]\nonumber \\
&\geq \mathbb{E}\Big[\mathbb{P}_{z_0}(\hat{Z}_n^{(0)} + \cdots + \hat{Z}_{n}^{(n-1)}=0, \hat Y_n=z_0-1|\mathcal{E})\mathbb{P}_{z_0}(Z_n>0|\mathcal{E})\Big] \nonumber \\
%&\qquad \qquad +\mathbb{E}\Big[\mathbb{P}_{z_0}(Z_n^{(0)}=z_0-1,\hat{Z}_n^{(1)}, \cdots, \cdots  \hat{Z}_{n}^{(n-1)},Y_n =0|\mathcal{E}) \mathbb{P}_{z_0}(Z_n>0|\mathcal{E})\Big]\nonumber \\
&= \mathbb{E}\Big[\mathbb{P}_{z_0}(Z_n>0|\mathcal{E})\  \frac{Q_{n}(z_0)}{\mathsf{p}_{n-1,n}} \ \frac{\mathsf{p}_{n-1,n}}{\mathsf{p}_{-1,n}}  \prod_{i=0}^{n-1} f'_i(f_{i,n}(0)) \Big] \nonumber \\
%+\mathbb{E}\big[    \P_1(Z_1, \cdots, Z_n=1 \ \vert \mathcal E )^{z_0}\big]\nonumber \\
&= \mathbb{E}\Big[Q_{n}(z_0) \prod_{i=0}^{n-1} f'_i\big(f_{i,n}(0)\big) \Big] .\label{interpretation}
%+\mathbb{E}\big[Q(1)^{z_0}\big]^n.
\end{align}
Recalling also that  $f'_0(s)=z_0 s^{z_0-1}$, we get
\begin{align*}
 \lim_{n\rightarrow\infty} \tfrac 1n \log \mathbb{P}_{z_0}(Z_n=z_0) &\geq  \limsup_{n\rightarrow\infty} \tfrac 1n \log \mathbb{E}\Big[Q_{n}(z_0)f_{0,n}^{z_0-1}(0) \prod_{i=1}^{n} f'_i\big(f_{i,n}(0)\big) \Big].
\end{align*}
Let us now prove the converse inequality. Following the previous section, $z_0$ is the smallest element in $\mathcal{I}$
and  $\kappa_n$ is the (now random) last moment when an environment satifies  $Q(0)>0$.
We  decompose the event $\{Z_n=z_0\}$ according to $\kappa_n$ : 
$$  \mathbb{P}_{z_0}(Z_n=z_0)% =\mathbb{E}\Big[\mathbb{P}_{z_0}(Z_n=z_0|\mathcal{E}, Z_n>0)\mathbb{P}_{z_0}(Z_n>0|\mathcal{E}) \Big]
=\sum_{k=0}^{n} \mathbb{E}\Big[\mathbb{P}_{z_0}(Z_n=z_0|\mathcal{E}, Z_n>0)\mathbb{P}_{z_0}(Z_n>0|\mathcal{E}); \kappa_n=k \Big]
$$
Using  that conditionally on $\kappa_n=k$, $\mathbb{P}_{z_0}(Z_n>0|\mathcal{E})=\mathbb{P}_{z_0}(Z_k>0|\mathcal{E})$ and Lemma \ref{le_subtree2}, we get by independence
\begin{align}
 \mathbb{P}_{z_0}(Z_n=z_0)&=\sum_{k=0}^{n} \mathbb{E}\Big[ \mathbb{P}_{z_0}(Z_k>0|\mathcal{E}) \frac{Q_k(z_0)}{\mathsf{p}_{k-1,k}}\frac{\mathsf{p}_{k-1,k}}{\mathsf{p}_{-1,k}}\prod_{i=0}^{k-1}  f'_i(f_{i,k}(0))  \prod_{j=k+1}^{n} Q_j(1)^{z_0}\ ;\ \kappa_n=k \Big]\nonumber\\
&\leq \sum_{k=0}^{n} \mathbb{E}\Big[ Q_k(z_0)\prod_{i=0}^{k-1}  f'_i(f_{i,k}(0))\Big] \prod_{j=k+1}^{n} \mathbb{E}\big[Q_j(1)^{z_0}\big]\ \nonumber \\
&= \sum_{k=1}^{n} \mathbb{E}\Big[ Q_k(z_0)\prod_{i=0}^{k-1} f'_i(f_{i,k}(0))\Big] \mathbb{E}\big[Q(1)^{z_0}\big]^{n-k-1}+ \mathbb{E}\big[Q(1)^{z_0}\big]^n.\nonumber 
\end{align}
Let $(a_n)_{n\in\N}$ be a sequence in $\R^+$ and $b>0$. Then, by standard results on the exponential rate of sums, we have  
\begin{align*}
\limsup_{n\rightarrow\infty}& \tfrac 1n \log \sum_{k=1}^n a_k b^{n-k}= \max\Big\{\limsup_{n\rightarrow\infty} \tfrac 1n \log a_n,\log b \Big\}.
\end{align*}
Thus
\Bea
&&  \lim_{n\rightarrow\infty} \tfrac 1n \log \mathbb{P}_{z_0}(Z_n=z_0)  \\
&& \qquad  \leq \max\Big\{\limsup_{n\rightarrow\infty} \tfrac{1}{n} \log \E\big[Q_{n}(z_0) \prod_{i=0}^{n}f'_i(f_{i,n}(0)\big)\big]
; \log\mathbb{E}\big[Q(1)^{z_0}\big]\Big\}.
\Eea
We now prove  that the first term always realizes the maximum. Using that $f'_0(f_{0,n}(0))=z_0f_{0,n}^{z_0-1}(0)=z_0 \mathbb{P}_1(Z_n=0|\mathcal{E})^{z_0-1}$ and
$f'_i\big(f_{i,n}(0)\big)\geq f'(0)$, we have
\begin{align*}
% \mathbb{E}\Big[Q_{n}(z_0) \prod_{i=0}^{n-1} f'_i\big(f_{i,n}(0)\big) \Big]&\geq
 \mathbb{E}\Big[Q_{n}(z_0) \prod_{i=0}^{n-1} f'_i\big(0\big) \Big] 
&\geq z_0 \mathbb{E}\Big[Q_{n}(z_0) \mathbb{P}_1(Z_n=0|\mathcal{E})^{z_0-1}\prod_{i=1}^{n-1} Q_i(1) \Big]\nonumber \\
&\geq z_0 \mathbb{E}\Big[Q_{n}(z_0) \Big(Q_n(0)\prod_{i=1}^{n-1} Q_i(1) \Big)^{z_0-1}\prod_{i=1}^{n-1} Q_i(1) \Big]\nonumber \\
&\geq z_0 \mathbb{E}\Big[Q_{n}(z_0) Q_n(0)^{z_0-1}\Big] \mathbb{E}\big[Q(1)^{z_0}\big]^{ n},\nonumber 
\end{align*}
By definition of $z_0$, $\mathbb{E}\big[Q_{n}(z_0) Q_n(0)^{z_0-1}]>0$, and we can conclude 
$$
 \lim_{n\rightarrow\infty} \tfrac 1n \log \mathbb{P}_{z_0}(Z_n=z_0) \leq \limsup_{n\rightarrow\infty} \tfrac{1}{n} \log \E\big[Q_{n}(z_0)f_{0,n}(0)^{z_0-1} \prod_{i=1}^{n}f'_i(f_{i,n}(0)\big)\big]\nonumber
$$
We end up the proof by checking the convergence of the sequence on the right-hand side above.
%$\tfrac 1n \log  \E\Big[Q_{n}(z_0)f_{0,n}(0)^{z_0-1} \prod_{i=1}^{n}f'_i\big(f_{i,n}(0)\big)\Big]$. 
We use again (\ref{geigerdecomp}) and  Lemma \ref{le_subtree1} to write
\begin{align}
\phi_n:&=\mathbb{P}_{z_0}(\hat Z^{(0)}_n+\ldots+\hat Z^{(n-1)}_n=0, Z_n=z_0) \nonumber \\
&=\E\Big[\mathbb{P}_{z_0}(\hat Z^{(0)}_n+\ldots+\hat Z^{(n-1)}_n=0 \ \vert \EE)\P(\hat{Y}_n=z_0-1) \Big]\nonumber \\
&= \E\Big[Q_{n}(z_0)\prod_{i=0}^{n}f'_i\big(f_{i,n}(0)\big)\Big]. \label{interpretation2}
\end{align}
It is  the probability of having $z_0$-many individuals in generation $n$, where all individuals in 
generation $n$ have a common ancestor in generation $n-1$. By Markov property, for $k=1,\ldots,n$
\begin{align*}&\mathbb{P}_{z_0}(\hat Z^{(0)}_n+\ldots+\hat Z^{(n-1)}_n=0, Z_n=z_0) \\
& \qquad \geq \mathbb{P}_{z_0}(\hat Z^{(0)}_k+\ldots+\hat Z^{(k-1)}_k=0, Z_k=z_0)\mathbb{P}_{z_0}(\hat Z^{(0)}_{n-k}+\ldots+\hat Z^{(n-k-1)}_{n-k}=0, Z_{n-k}=z_0). 
\end{align*}
The same subadditivity arguments as in the proof of Lemma \ref{rho_1} applied to $\phi_n$ yield the existence of the limit of 
$\tfrac 1n \log  \phi_n$ and
\be 
\lim_{n\rightarrow\infty} \tfrac 1n \log \mathbb{P}_{z_0}(Z_n=z_0)= \lim_{n\rightarrow \infty} \tfrac{1}{n} \log \phi_n =  \lim_{n\rightarrow \infty} \tfrac{1}{n} \log \E\Big[Q_{n}(z_0)f_{0,n}(0)^{z_0-1} \prod_{i=1}^{n}f'_i\big(f_{i,n}(0)\big)\Big] \label{equat}
\ee
This ends up the proof.

\end{proof}

\section{Proof of  Proposition \ref{prop2}}
\subsection{Preliminaries on random walks}
In this section, we will shortly present some standard results on random walks which we use. In all following results, we take $S_0=0$. 
We assume that there exists   $s>0$ such that  $\mathbb{E}[Xe^{-sX}]=0$. This suggests to change to a measure $\mathbf{P}$, 
defined by 
\[ \mathbf{P}(X\in dx):= \frac{e^{-s x} \mathbb{P}(X\in dx) }{\mu}, \]
where $\mu:=\mathbb{E}[e^{-sX}]$. We note that $\mathbf{E}[X]=\mu^{-1} \mathbb{E}[Xe^{-sX}]=0$, and that $S$ is a recurrent random walk  under $\mathbf{P}$. 
In the following proofs, we use the change of measure described here. We define
\[L_n:=\min\{S_1,\ldots, S_n\} \] 
and 
\[M_n:=\min\{S_1,\ldots, S_n\}. \] 

The following result is directly derived from  
\cite{abkv10}[Proposition 2.1]. For lattice random walks, the claims result e.g. from \cite{VV}[Theorem 6].
\begin{lemma}\label{le_as1}
Assume that $\mathbf{E}[X]=0$ and $\mathbf{Var}(X)<\infty$. Then for every $\theta>0$, there exists   $d=d(\theta)$ such that
\[ \mathbf{E}[e^{-\theta S_n};L_n\geq 0] \sim d \ n^{-\tfrac 32} \quad (n\rightarrow\infty) \]
and
\[ \mathbf{E}[e^{\theta S_n};M_n< 0] \sim d \ n^{-\tfrac 32} \quad (n\rightarrow\infty) . \]
\end{lemma}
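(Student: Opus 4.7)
The plan is to invoke \cite{abkv10}[Proposition 2.1] in the non-lattice setting (and \cite{VV}[Theorem 6] in the lattice one) essentially verbatim; the only thing to verify is that the hypotheses of those results hold under the tilted measure $\mathbf{P}$, and then to reduce the second assertion to the first by a reflection argument. Under $\mathbf{P}$, the walk has mean zero by the defining identity $\mathbf{E}[Xe^{-sX}]=0$ used in the tilt and finite variance by the hypothesis $\mathbf{Var}(X)<\infty$, which are precisely the ingredients required by the cited propositions. The non-lattice hypothesis comes from Assumption \ref{axi}.

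To make clear why the $n^{-3/2}$ order is the correct one, I would present the following heuristic as a sanity check. By Sparre Andersen, $\mathbf{P}(L_n\geq 0)\sim c_1 n^{-1/2}$, and under $\mathbf{P}(\,\cdot\mid L_n\geq 0)$ the rescaled endpoint $S_n/(\sigma\sqrt{n})$ converges weakly to a Rayleigh law with density $y\,e^{-y^2/2}$ on $(0,\infty)$. Decomposing
\begin{equation*}
\mathbf{E}[e^{-\theta S_n};\,L_n\geq 0] \;=\; \mathbf{P}(L_n\geq 0)\cdot \mathbf{E}[e^{-\theta S_n}\mid L_n\geq 0]
\end{equation*}
and substituting $u=\sigma\sqrt{n}\,y$ in the limit density yields formally $\mathbf{E}[e^{-\theta S_n}\mid L_n\geq 0] \asymp 1/(n\theta^{2}\sigma^{2})$, from which the claimed order follows. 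The actual rigorous content of \cite{abkv10}[Proposition 2.1] lies in upgrading this weak-convergence argument to convergence of the Laplace functional, which requires uniform integrability estimates obtained through Doney-type bounds on the conditioned walk.

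For the second asymptotic (reading $M_n$ as $\max\{S_1,\ldots,S_n\}$, correcting the displayed definition), I would use the reflection $X\mapsto -X$. It preserves both the zero-mean and finite-variance hypotheses on $\mathbf{P}$; the reflected walk $S^- = -S$ turns $\{M_n<0\}$ into $\{L_n^- > 0\}$ and $e^{\theta S_n}$ into $e^{-\theta S_n^-}$, so the second bound follows from the first applied to $S^-$. The strict-versus-non-strict boundary effect contributes at most a lower-order correction in the non-lattice case and only redefines the constant $d$ in the lattice case. The main obstacle, if one wanted to avoid citing the proposition entirely, would be precisely the uniform integrability step for $e^{-\theta S_n}\ind_{L_n\geq 0}$; since the tilt already provides an exponential moment of $X$ on a one-sided interval, this could be supplied by renewal-theoretic bounds on the ladder-height distribution, but here it is cleanest to invoke the cited results.
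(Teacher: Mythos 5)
Your approach is exactly the paper's: it proves nothing itself but states the lemma is ``directly derived from \cite{abkv10}[Proposition 2.1]'' in the non-lattice case and from \cite{VV}[Theorem 6] in the lattice case, so the only work is checking that the tilted measure $\mathbf{P}$ satisfies the zero-mean and finite-variance hypotheses, which you do. Your reflection reduction of the second display to the first and your Rayleigh-law heuristic are useful elaborations, and you are right that the paper's displayed definition of $M_n$ is a typo for $\max\{S_1,\ldots,S_n\}$; the only caveat is that a reflection of the increment law produces a different walk and hence a priori a different constant, but the lemma's insistence on a single $d$ for both asymptotics is immaterial to its use (Corollary~\ref{corexp} only needs the $O(n^{-3/2})$ order).
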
   
The following lemma results from \cite{abkv10}[Equation (2.5) therein].
\begin{lemma}\label{le_as}
Assume that $\mathbf{E}[X]=0$ and $\mathbf{Var}(X)<\infty$. Then for every $c>0$ large enough, there exists   $d=d(c)$ such that
\[ \mathbf{P}(L_n\geq 0, S_n\leq c) \sim  d \ n^{-\tfrac 32} \qquad (n\rightarrow\infty). \]
\end{lemma}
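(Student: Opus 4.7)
I would prove Lemma~\ref{le_as} as an integrated form of a local limit theorem for $S$ conditioned to stay non-negative, in the same circle of ideas as Lemma~\ref{le_as1}. The statement is ascribed to \cite{abkv10}, equation~(2.5), so the cleanest route is simply to deduce it from that reference by integration; below I sketch how this would go.

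The first step is a pointwise asymptotic: for a centred finite-variance walk, the standard Caravenna-type local CLT for walks conditioned to stay non-negative furnishes a constant $C > 0$ and the renewal function $V$ of the strict descending ladder heights of $-S$ (harmonic for the walk killed on exiting $[0,\infty)$) such that, uniformly in $y$ on compact subsets of $[0,\infty)$,
\[ n^{3/2}\,\mathbf{P}\left(L_n \geq 0,\, S_n \in [y, y+h]\right) \longrightarrow C\, V(y)\, h \qquad (n \to \infty), \]
for small $h > 0$ (after a suitable discretisation in the lattice case, which is consistent with the $\sim n^{-3/2}$ asymptotics appearing in Lemma~\ref{le_as1}).

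The second step is to integrate this over $y \in [0, c]$, invoking dominated convergence via the uniform upper bound
\[ \mathbf{P}\left(L_n \geq 0,\, S_n \in [y, y+h]\right) \leq K(1+y)\,h\,n^{-3/2}, \]
available from the same fluctuation-theoretic toolkit. One then obtains
\[ n^{3/2}\,\mathbf{P}(L_n \geq 0,\, S_n \leq c) \longrightarrow C \int_0^c V(y)\,dy =: d(c), \]
which is the claimed asymptotic. Positivity of $d(c)$ holds as soon as $c$ exceeds the infimum of the support of the distribution of $V$-weighted non-negative excursions, which explains the "for $c$ large enough" assumption.

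The principal obstacle is establishing the uniform local CLT together with its companion upper bound. The classical route proceeds via Wiener--Hopf factorisation, splitting $S$ at the first weak descending ladder epoch and applying Stone--Shepp type local CLTs to the two pieces; the harmonic function $V$ then enters via the renewal measure of the ladder heights. Since this entire machinery is developed in \cite{abkv10}, Lemma~\ref{le_as} follows from (2.5) therein by the integration described above, and I would not redevelop the fluctuation theory from scratch.
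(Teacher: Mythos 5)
The paper offers no proof of this lemma: it is ascribed wholesale to \cite{abkv10}[Eq.\ (2.5)]. Your proposal spells out the fluctuation-theoretic content behind that citation --- a Caravenna/Vatutin--Wachtel local limit theorem for the walk restricted to $\{L_n\geq 0\}$, uniform in $y$ on compacts, followed by integration of the local density over $[0,c]$ --- which is indeed the standard route to such an estimate, and you correctly identify the harmonic function $V$ as the renewal function of the strict descending ladder heights (the function $u$ in the paper's own notation before Lemma~\ref{le_critical}). In substance your argument and the paper's citation rest on the same machinery, so this is essentially the same approach, with you supplying the mechanism that the paper outsources.

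Two small remarks. First, since the integration runs only over the compact set $[0,c]$, the dominated-convergence step with the envelope $K(1+y)\,h\,n^{-3/2}$ is unnecessary: uniform convergence of $n^{3/2}\,\mathbf{P}(L_n\geq 0,\,S_n\in[y,y+h])$ on $[0,c]$ already lets you sum a Riemann partition, send $n\to\infty$, and then $h\to 0$. Second, the ``for $c$ large enough'' caveat is, as the paper's remark immediately following the lemma makes precise, only a concession to the lattice case (one needs $\mathbf{P}(0\leq S_1\leq c)>0$); in the non-lattice setting every $c>0$ works, because $u>0$ on all of $[0,\infty)$ so $\int_0^c u(y)\,dy>0$. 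Your phrasing about the ``infimum of the support of $V$-weighted non-negative excursions'' gets at this but is vaguer than necessary.
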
 
\textbf{Remark:} In the non-lattice case, the result holds for every $c>0$. In the lattice case, $c$ must be chosen such that $\mathbf{P}(0\leq S_1\leq c)>0$. \medskip\\

From the previous results, it follows that
\begin{cor}\label{corexp}
Assume that $\mathbf{E}[X]=0$ and $\mathbf{Var}(X)<\infty$. Then for every $\theta>1$,
\[\mathbf{E}\big[e^{-S_n+\theta L_n}\big]=O(n^{-3/2}).\]
\end{cor}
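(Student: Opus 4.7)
The plan is to decompose $\mathbf{E}[e^{-S_n + \theta L_n}]$ according to the first time $\tau := \min\{k \in \{1,\ldots,n\} : S_k = L_n\}$ at which the minimum is achieved, and to reduce each contribution to one of the expectations already treated by Lemma \ref{le_as1}. Writing $e^{-S_n + \theta S_k} = e^{(\theta-1) S_k}\, e^{-(S_n - S_k)}$ and noting that $\{\tau = k\} = \{S_j > S_k,\, j < k\} \cap \{S_j \geq S_k,\, j > k\}$ factorises along the Markov property at time $k$, I would obtain
\[
\mathbf{E}\big[e^{-S_n + \theta L_n}\big] \ = \ \sum_{k=1}^n B_k \cdot q_{n-k},
\]
where $B_k := \mathbf{E}[e^{(\theta-1)S_k};\, S_j > S_k,\, 1\leq j < k]$ and $q_m := \mathbf{E}[e^{-W_m};\, \min_{1\leq i\leq m} W_i \geq 0]$ with $W$ an independent copy of the walk starting at $0$. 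The first claim of Lemma \ref{le_as1} gives $q_m = O(m^{-3/2})$, so the main task is to prove $B_k = O(k^{-3/2})$.

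For $B_k$ I would use the time-reversal $Y_i := S_k - S_{k-i}$, $i = 0,\ldots,k$. Then $(Y_i)$ is a mean-zero random walk with the same increment law under $\mathbf{P}$, $Y_k = S_k$, and the constraint $S_j > S_k$ for $j < k$ translates to $Y_i < 0$ for $1 \leq i \leq k-1$, so
\[
B_k \ = \ \mathbf{E}\big[e^{(\theta-1)Y_k};\, Y_i < 0,\, 1\leq i \leq k-1\big].
\]
I would split according to the sign of $Y_k$. On $\{Y_k < 0\}$ the constraint strengthens to $\max_{i \leq k} Y_i < 0$, and the second claim of Lemma \ref{le_as1} applied to $Y$ (with parameter $\theta - 1 > 0$) yields $O(k^{-3/2})$. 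On $\{Y_k \geq 0,\, Y_{k-1} < 0\}$ I would condition on $Y_1,\ldots,Y_{k-1}$ and integrate out the last step; writing $\phi(y) := \mathbf{E}[e^{(\theta-1)(y+X)};\, y + X \geq 0] = e^{(\theta-1)y}\,\mathbf{E}[e^{(\theta-1)X};\, X \geq -y]$, a standard exponential-moment estimate gives $\phi(y) \leq C\, e^{(\theta-1)y}$ for $y \leq 0$. The remaining contribution is then at most $C\, \mathbf{E}[e^{(\theta-1)Y_{k-1}};\, Y_i < 0,\, i \leq k-1]$, which the first claim of Lemma \ref{le_as1}, applied to $-Y$, identifies as $O(k^{-3/2})$ as well.

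Assembling the estimates, $B_k \cdot q_{n-k} = O\big(\min(1, k^{-3/2})\,\min(1, (n-k)^{-3/2})\big)$, where $\min(1,\cdot)$ handles the boundary values $k \in \{1, n\}$ at which one of the two constraints is vacuous. The elementary estimate
\[
\sum_{k=1}^n \min(1, k^{-3/2})\,\min(1, (n-k)^{-3/2}) \ = \ O(n^{-3/2})
\]
then produces the claim. I expect the delicate point to be the second sub-case in the bound on $B_k$: on $\{Y_k \geq 0,\, Y_{k-1} < 0\}$ the integrand $e^{(\theta-1)Y_k}$ is at least $1$, so it cannot be discarded. The tail condition $X_k \geq |Y_{k-1}|$ forces $\phi$ to decay exponentially as $y \to -\infty$, and it is precisely the hypothesis $\theta > 1$ (together with a modest exponential moment of $X$ under $\mathbf{P}$, implicit in the setup used for Lemma \ref{le_as1}) that converts this tail decay into the factor $e^{(\theta-1)y}$ needed to recast the remainder in the form handled by Lemma \ref{le_as1}.
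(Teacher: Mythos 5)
Your overall strategy --- decompose at the first time the walk attains its minimum, factor the expectation into a pre-minimum and a post-minimum piece by time-reversal and independence, apply Lemma \ref{le_as1} to each piece, and conclude with the convolution estimate $\sum_k k^{-3/2}(n-k)^{-3/2}=O(n^{-3/2})$ --- is the same as the paper's. The divergence, and the gap, is in \emph{which} minimum you stop at.

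You set $\tau:=\min\{k\in\{1,\ldots,n\}: S_k=L_n\}$, so on $\{\tau=k\}$ the pre-part event is $\{S_j>S_k,\,1\leq j<k\}$, with no constraint relating $S_k$ to $S_0=0$. After time-reversal this becomes $\{Y_i<0,\,1\leq i\leq k-1\}$ with the endpoint $Y_k=S_k$ left free, and you must handle $\{Y_k\geq 0\}$ by integrating out the last step via $\phi(y)=e^{(\theta-1)y}\,\mathbf{E}[e^{(\theta-1)X};X\geq -y]$. Bounding $\phi(y)\leq C e^{(\theta-1)y}$ requires $C:=\mathbf{E}[e^{(\theta-1)X}]<\infty$. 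This is already visible at $k=1$, where the pre-constraint is vacuous and $B_1=\mathbf{E}[e^{(\theta-1)X}]$ outright. That exponential moment is \emph{not} implied by the corollary's hypotheses $\mathbf{E}[X]=0$, $\mathbf{Var}(X)<\infty$, and it is \emph{not} implicit in Lemma \ref{le_as1}: in both parts of that lemma the sign constraints ($L_n\geq 0$ with $e^{-\theta S_n}$, or $M_n<0$ with $e^{\theta S_n}$) force the integrand to be bounded by $1$, so Lemma \ref{le_as1} genuinely needs only the second moment. Your closing remark, attributing the needed decay to ``a modest exponential moment of $X$ under $\mathbf{P}$, implicit in the setup used for Lemma \ref{le_as1},'' is therefore not correct.

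The paper sidesteps this entirely by taking $\tau_n:=\min\{k\in\{0,\ldots,n\}:S_k=L_n\wedge 0\}$, so that $S_0=0$ participates in the minimization. Then on $\{\tau_n=k\}$ with $k\geq 1$ one has $S_k<S_0=0$ as part of the pre-event, which after duality is exactly $\{M_k<0\}$; there $e^{(\theta-1)S_k}\leq 1$ automatically, and the $k=0$ term is just $\mathbf{E}[e^{-S_n};L_n\geq 0]$. No exponential moment is needed. (Indeed, with $L_n=\min\{S_1,\ldots,S_n\}$ read literally, the contribution of $\{L_n>0\}$ to $\mathbf{E}[e^{-S_n+\theta L_n}]$ can be infinite without exponential moments, so the statement is really about $L_n\wedge 0$ --- which is also all that the applications in Lemmas \ref{mrcaint2}--\ref{le3} use, since $\mathbb{P}(Z_n>0\,|\,\mathcal{E})\leq e^{L_n\wedge 0}$.) If you replace your $\tau$ by the paper's $\tau_n$ (equivalently, include $S_0$ in the minimum), your proof reduces to the paper's and the extra sub-case and its moment requirement disappear.
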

\begin{proof} We use a decomposition according to the first minimum of the random walk, i.e. let
\[\tau_n:=\min\{k\in\{0,\ldots,n\}\ | \ S_k=L_n\wedge 0\}. \]
Decomposing at the first minimum and using duality yields
\begin{align*}
 \mathbf{E}\big[e^{-S_n+\theta L_n}\big]&= \sum_{k=0}^n \mathbf{E}\big[e^{-(S_n-\theta S_k)};\tau_n=k\big]\\
&=\sum_{k=0}^n\mathbf{E}\big[e^{-(S_n-S_k)} e^{(\theta-1)S_k} ;\tau_k=k, \min_{i=k,\ldots,n} \{S_n-S_i\}\geq 0\big]\\
&=\sum_{k=0}^n\mathbf{E}\big[ e^{(\theta-1)S_k} ;\tau_k=k\big]\mathbf{E}\big[e^{-S_{n-k}}; L_{n-k}\geq 0\big]\\
& =\sum_{k=0}^n\mathbf{E}\big[ e^{(\theta-1)S_k} ;M_k<0\big]\mathbf{E}\big[e^{-S_{n-k}}; L_{n-k}\geq 0\big] .
\end{align*}
The last step follows from duality (see e.g. \cite{abkv11}). 
Recall that by assumption, $\theta-1>0$. Applying Lemma \ref{le_as1} now yields that there is a $c<\infty$ such that for $n$ large enough
\begin{align*}
 \mathbf{E}\big[e^{-S_n+\theta L_n}\big]& \leq c\Big(\frac{1}{n^{3/2}}+  \sum_{k=1}^{n-1} \frac{1}{(n-k)^{3/2} k^{3/2}}\Big) \\
&\leq  c\Big(\frac{2d}{n^{3/2}}+ \frac{2d}{\lfloor n/2\rfloor^{3/2} \sum_{k=0}^{\lfloor n/2\rfloor} k^{3/2}}\Big) =O(n^{-3/2}),
\end{align*}
which is the claim of the corollary.
\end{proof}

In the next lemma, we will use probability measures $\mathbf{P}^+$ and $\mathbf{P}^-$ which e.g. have been introduced in \cite{abkv10}. Here, we recall the definition.
Define the renewal functions $u: \mathbb{R} \to \mathbb{R}$ and $v: \mathbb{R} \to \mathbb{R}$ by
\begin{align*} u(x)  \ &= \ 1 + \sum_{k=1}^\infty \mathbf{P} (-S_k\le x, \max\{S_1,\ldots,S_k\} < 0 ), \quad x \ge 0,\\
v(x) \ &:= \ 1 + \sum_{k=1}^\infty \mathbf{P} (-S_k > x, \min\{S_1,\ldots,S_k\} \geq 0), \quad x \leq 0 \  ,\\
v(0) \ &= \ u(0) \  = \ 1 ,
\end{align*}
and 0 elsewhere. Using the identities 
\begin{equation} \begin{array}{rl} \mathbf{E}  [u(x+X); X + x \ge 0] \ = \ u(x),  &x \ge 0,  \\   \mathbf{E} [ v(x+X);X+x<0] \ = \ v(x),   &x \le 0, \end{array}  \label{harm}
\end{equation}
which hold for every oscillating random walk, one can construct probability measures $\mathbf{P}^+$ and $\mathbf{P}^-$: 
Define the filtration $\mathcal F=(\mathcal F_n)_{n \ge 0}$, where $\mathcal F_n=\sigma(Q_1,\ldots,Q_n,Z_0,\ldots,Z_n)$ . Then $S$ is adapted to 
$\mathcal F$ and $X_{n+1}$ (as well as $Q_{n+1}$) is independent of $\mathcal F_n$ for all $n \ge 0$. Now, for every bounded, $\mathcal F_n$-measurable random variable $R_n$
we can define
\begin{align*} \mathbf E^+_x[ R_n ] \ &= \ \frac{1}{u(x)}\mathbf{E}_x[  R_n u(S_n); L_n \ge 0], \quad x \ge 0,\\ \mathbf E^-_x[ R_n ] \ &= \ \frac{1}{v(x)}\mathbf{E}_x[ R_n v(S_n); M_n < 0], \quad x \le 0 . 
\end{align*}
The probability measures  $\mathbf P_x^+$ and $\mathbf P_x^-$ correspond to conditioning  the random walk $S$  on not to enter $(-\infty,0)$ and $[0,\infty)$ respectively.\\ 
More precisely, if $R_n\rightarrow R$ a.s. with respect to $\mathbf{P}^+$ (resp. $\mathbf{P}^-$), then
\begin{align*}
\lim_{n\rightarrow\infty} &\mathbf{P}(R_n\in \cdot|L_n\geq 0)\rightarrow \mathbf{P}^+(R\in\cdot)\\ 
\lim_{n\rightarrow\infty} &\mathbf{P}(R_n\in \cdot|M_n<0)\rightarrow \mathbf{P}^-(R\in\cdot) .
\end{align*}
The first result is proved in \cite{kersting053}[Lemma 2.5] in the more general setting of random walks in the domain of attraction of a stable law. The proof of the second claim is analogous. 

We end up with an asymptotic result in the critical case, which is stated and proved only in the non-lattice case.
\begin{lemma}\label{le_critical}
We assume that $\mathbf{E}[X]=0$, $\mathbf{Var}(X)<\infty$, and that $\mathbf{E}\big[(\log^+ \xi_Q(a))^{2+\varepsilon}]<\infty$ for some $\varepsilon>0$. Then, for every $c>0$,
\begin{align*}
 \liminf_{n\rightarrow\infty} \mathbf{P}(Z_n>0|L_n\geq 0, S_n\leq c) >0 .
\end{align*}
\end{lemma}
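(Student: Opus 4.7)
The plan is to adapt the strategy used in \cite{abkv10}: reduce via Bayes and Lemma \ref{le_as} to a matching lower bound on the numerator, control $1-f_{0,n}(0)$ via the Geiger-Kersting iteration, and use the weak convergence of the conditioned walk to a Brownian excursion, together with the truncated-moment hypothesis, to show that the error term from the iteration is tight under the conditional probability. Since
\begin{align*}
\mathbf{P}(Z_n>0\mid L_n\geq 0,\,S_n\leq c)=\frac{\mathbf{E}\big[(1-f_{0,n}(0))\,\ind_{\{L_n\geq 0,\,S_n\leq c\}}\big]}{\mathbf{P}(L_n\geq 0,\,S_n\leq c)}
\end{align*}
and Lemma \ref{le_as} yields $\mathbf{P}(L_n\geq 0,S_n\leq c)\sim d\,n^{-3/2}$, it suffices to prove that $\liminf_n n^{3/2}\mathbf{E}[(1-f_{0,n}(0))\,\ind_{\{L_n\geq 0,S_n\leq c\}}]>0$.

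For the quenched survival probability, the Geiger-Kersting iteration, based on the identity $\tfrac{1}{1-f_k(s)}=e^{-X_k}/(1-s)+r_k(s)$ with a nonnegative remainder satisfying $r_k(s)\leq C\,\xi_{Q_k}(a)$ uniformly in $s\in[0,1]$, yields by telescoping (using $f_{n,n}=\mathrm{Id}$, hence $1/(1-f_{n,n}(0))=1$) the pointwise bound
\begin{align*}
\frac{1}{1-f_{0,n}(0)}\ \leq\ e^{-S_n}+C\sum_{k=1}^{n}e^{-S_{k-1}}\,\xi_{Q_k}(a),
\end{align*}
and consequently
\begin{align*}
1-f_{0,n}(0)\ \geq\ \frac{e^{S_n}}{1+C\,e^{S_n}\,\Delta_n},\qquad \Delta_n:=\sum_{k=1}^{n}e^{-S_{k-1}}\xi_{Q_k}(a).
\end{align*}
On the event $\{L_n\geq 0,\,S_n\leq c\}$ the prefactor $e^{S_n}$ lies in $[1,e^c]$, so the lemma reduces to the tightness of $\Delta_n$ under the conditional law.

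This tightness is the main obstacle. Here I would use that, under $\mathbf{P}(\cdot\mid L_n\geq 0,S_n\leq c)$, the rescaled trajectory $(S_{\lfloor nt\rfloor}/\sqrt n)_{t\in[0,1]}$ converges weakly to a normalized Brownian excursion, so that $\min_{k\in[\varepsilon n,(1-\varepsilon)n]}S_k/\sqrt n$ is bounded away from $0$ in probability; combined with the $(2+\varepsilon)$-log-moment bound on $\xi_Q(a)$ from Assumption \ref{axi}, this makes the bulk contribution $\sum_{\varepsilon n\leq k\leq(1-\varepsilon)n}e^{-S_{k-1}}\xi_{Q_k}(a)$ negligible via a union bound. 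The boundary contributions near $k\approx 0$ and $k\approx n$ are treated by decomposing at the first minimum exactly as in the proof of Corollary \ref{corexp} and reversing time for the right endpoint, reducing the problem to two independent processes to which $\mathbf{P}^+$ and $\mathbf{P}^-$-type estimates from \cite{abkv10} apply. The delicate point is that $(S_k)$ and $(\xi_{Q_k}(a))$ are not independent, since both are functions of $Q_k$; this forces one to first condition on $(X_k)$ and then apply Chebyshev to the residual randomness of $\xi_{Q_k}(a)$, which is precisely where the exponent $2+\varepsilon$ in Assumption \ref{axi} is consumed. Once tightness of $\Delta_n$ is established, the bound above gives $1-f_{0,n}(0)\geq\delta>0$ with conditional probability at least $1/2$ for large $n$, which combined with Lemma \ref{le_as} yields the conclusion.
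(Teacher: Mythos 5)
Your opening reduction (Bayes + Lemma~\ref{le_as}, then the Geiger--Kersting/Agresti lower bound for $1-f_{0,n}(0)$) is the same as the paper's, but from there you diverge genuinely: the paper splits the sum at $\lfloor n/2\rfloor$, packages everything into the bounded functional $\varphi(U_n,\tilde V_n,S_n)$, and then invokes Lemma~3.1 and Proposition~2.5 of \cite{abkv10} to get convergence of the conditional expectation, whereas you propose to prove tightness of $\Delta_n=\sum_k e^{-S_{k-1}}\xi_{Q_k}(a)$ under $\mathbf{P}(\cdot\mid L_n\geq 0,S_n\leq c)$ directly via weak convergence of the rescaled walk to a Brownian excursion plus a union bound and a conditioning-on-$(X_k)$ argument. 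That is a self-contained route that buys you independence from the black-boxed Proposition~2.5, at the price of having to re-establish (a special case of) its content; the paper's route is shorter precisely because it delegates the tightness/convergence work to the cited result.

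There is, however, a concrete gap at the very first technical step. The Geiger--Kersting remainder satisfies $r_k(s)\le \eta_k:=f_k''(1)/f_k'(1)^2$ (the \emph{untruncated} standardized second factorial moment), not $r_k(s)\le C\,\xi_{Q_k}(a)$ as you claim. The hypothesis of the lemma controls only the truncated quantity $\xi_Q(a)$, and $\eta_q-\xi_q(a)$ is of order $a^2/m_q^2$, which is unbounded unless $X=\log m_Q$ is bounded below. This bound does hold in the only place the lemma is applied (the tilted measure $\mathbf{P}$ in the proof of Proposition~\ref{prop0} is constructed with a truncation $X>x$), but as stated your argument assumes it tacitly. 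Relatedly, the excursion/tightness step is only an outline: you need a conditional invariance principle for the walk under both the floor constraint $L_n\ge 0$ and the endpoint constraint $S_n\le c$, uniform enough to control $\min_{\varepsilon n\le k\le(1-\varepsilon)n}S_k/\sqrt n$; you need to make the conditioning-on-$(X_k)$-then-Chebyshev step precise (since the conditioning event couples $S_{k-1}$ and $Q_k$); and you need the boundary analysis via $\mathbf{P}^\pm$ that you merely gesture at. None of these are fatal, but together they amount to re-deriving the content of Lemma~3.1 and Proposition~2.5 of \cite{abkv10}, which is exactly the work the paper's proof avoids by citation.
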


\begin{proof} 
The proof follows essentially \cite{abkv10} and we just present the main steps. First,

 Lemmas \ref{le_as1} and \ref{le_as} ensure that  for all  $\theta, c>0$ large enough,  there exists  $d>0$ such that 
\begin{align}
 \mathbf{E}\big[e^{-\theta S_n}; L_n\geq 0\big] \sim d \ \mathbf{P}(L_n\geq 0, S_n\leq c)\qquad (n\rightarrow \infty). \label{help03}
\end{align}
Secondly, we recall the well-known estimate (see e.g. \cite{agresti}[Lemma 2]) 
\begin{align*}
 \mathbf{P}(Z_n>0 \ \vert \ \EE) \geq \frac{1}{e^{-S_n}+\sum_{i=0}^{n-1}\eta_{i+1} e^{-S_i} }\quad \text{a.s.},
\end{align*}
where $\eta_i:=\sum_{y=1}^\infty y(y-1)Q_i(y)/m_{Q_i}^2$. Then, we rewrite
\begin{align*}
\mathbf{E}\Big[&\frac{1}{e^{-S_n}+\sum_{i=0}^{n-1}\eta_{i+1} e^{-S_i} }; L_n\geq 0, S_n\leq c\Big]\\
&\geq \mathbf{E}\Big[\frac{1}{1+\sum_{i=0}^{\lfloor n/2\rfloor}\eta_{i+1} e^{-S_i}+e^{-S_{\lfloor n/2\rfloor}} 
\sum_{i=\lfloor n/2\rfloor +1}^{n-1}\eta_{i+1} e^{S_{\lfloor n/2\rfloor}-S_i} }; L_n\geq 0, S_n\leq c\Big] \\
&\geq \mathbf{E}\Big[\frac{(c-S_n)^+\wedge 1}{1+\sum_{i=0}^{\lfloor n/2\rfloor}\eta_{i+1} e^{-S_i}+ 
\sum_{i=\lfloor n/2\rfloor +1}^{n-1}\eta_{i+1} e^{S_{\lfloor n/2\rfloor}-S_i} }; L_n\geq 0\Big] \\
&= \mathbf{E}\Big[\varphi(U_n, \tilde V_n, S_n); L_n\geq 0\Big]\\
&\geq e^{-c/2} \mathbf{E}\Big[e^{-S_n/2}\varphi(U_n, \tilde V_n, S_n); L_n\geq 0\Big],
\end{align*}
where $U_n:= \sum_{i=0}^{\lfloor n/2\rfloor}\eta_{i+1} e^{-S_i}$, $\tilde V_n:=\sum_{i=\lfloor n/2\rfloor +1}^{n-1}\eta_{i+1} e^{S_{\lfloor n/2\rfloor}-S_i}$ and  $\varphi(u,v,z)=(1+u+v)^{-1} (c-z)^+\wedge 1$. 
Using (\ref{help03}), it becomes (with $\theta=\tfrac 12$)
\begin{align*}
\liminf_{n\rightarrow\infty} \mathbf{P}&(Z_n>0|L_n\geq 0, S_n\leq c) \geq d^{-1} \liminf_{n\rightarrow\infty} \frac{e^{-c/2} \mathbf{E}\Big[e^{-S_n/2}\varphi(U_n, \tilde V_n, S_n); L_n\geq 0\Big]}{\mathbf{E}\big[e^{-S_n/2}; L_n\geq 0\big]}.
\end{align*}
Due to monotonicity and Lemma 3.1 in \cite{abkv10}, 
the limits of $U_{\infty}=\lim_{n\rightarrow\infty}U_n$ and \linebreak $V_\infty=\lim_{n\rightarrow\infty}\sum_{i=0}^{\lfloor n/2\rfloor}\eta_{i} e^{S_i}$ exist 
and are finite respectively under the probabilities $\mathbf{P}^+$-a.s. and $\mathbf{P}^-$-a.s. 
Thus all conditions of Proposition 2.5 in \cite{abkv10} are met. Applying this proposition with $\theta=1/2$, 
there exists a non zero measure $\nu_{1/2}$ on $\mathbb{R}^+$ which gives the convergence of the right-hand side above and 
\begin{align*}
\liminf_{n\rightarrow\infty} \mathbf{P}&(Z_n>0|L_n\geq 0, S_n\leq c) & \geq \int_{\R_+^3} \varphi(u,v,-z) \mathbf{P}^+(U_\infty\in du)\mathbf{P}^-(V_\infty\in dv) \nu_{1/2}(dz) >0 .
\end{align*}
Note that in the function $\varphi$, $z$ is changed to $-z$ for duality reasons (see \cite{abkv10} for details). 
As  $U_{\infty}$ and $V_{\infty}$ are a.s. finite with respect to the corresponding measures, this yields the claim.
\end{proof}
\textbf{Remark.} The proof may be adapted to the lattice case, by proving for example that
\[\liminf_{n\rightarrow\infty}\mathbf{P}^+\Big(\sum_{i=0}^{n/2} \eta_{i+1}e^{-S_i} <d,S_n/\sqrt{n} \in (a,b) \Big) >0.\]
We  note that 
$\mathbf{P}^+(\sum_{i=0}^\infty \eta_{i+1}e^{-S_i}<\infty)=\mathbf{P}^-(\sum_{i=1}^\infty \eta_{i}e^{S_i}<\infty)=1$
has been proved in \cite{kersting053} also for the non-lattice case. 
But the main remaining problem is that Proposition 2.5 in \cite{abkv10} is only stated for 
non-lattice random walks. The generalization of this result is a technically involved task and beyond the scope of this paper.

\subsection{Proof of Proposition \ref{prop2} (i) : $\rho>0$}\label{rho0}
Under Assumption \ref{technical2} , we now prove that $\rho>0$. It means that  the
probability of staying small but alive is exponentially small. The proof relies again on the Geiger construction and results of the previous section. \medskip\\
We assume that there exists  $\gamma>0$ such that  $Q(0)<1-\gamma$ a.s. and $\mathbb{E}[|X|]<\infty$. Let $z_0$ be the smallest element in $\mathcal{I}$. Using (\ref{interpretation2}) and $(\ref{equat})$, we get 
\begin{align*}
 \lim_{n\rightarrow\infty}\tfrac 1n\log\mathbb{P}_{z_0}(Z_n=z_0)
 &= \lim_{n\rightarrow\infty}\tfrac 1n\log\mathbb{E}\big[\mathbb{P}_{z_0}(\hat{Z}^{(0)}_n+\ldots+\hat{Z}^{(n-1)}_n=0,\hat{Y}_n=z_0-1|\mathcal{E})\mathbb{P}_{z_0}(Z_n>0|\mathcal{E})\big]\\
&\leq \liminf_{n\rightarrow\infty}\tfrac 1n\log \mathbb{E}\Big[\prod_{j=0}^{n-1} \mathbb{P}(\hat{Z}^{(j)}_n=0|\mathcal{E})\Big] \\
& =\liminf_{n\rightarrow\infty}\tfrac 1n\log \mathbb{E}\Big[\exp\Big(\sum_{j=0}^{n-1}\log \mathbb{P}(\hat{Z}^{(j)}_n=0|\mathcal{E})\Big)\Big] .\label{10012012}
\end{align*} 
The fact that $\log(x)\leq x-1$ yields
\begin{align*}
\mathbb{E}\Big[\exp\Big(\sum_{j=0}^{n-1}\log \mathbb{P}(\hat{Z}^{(j)}_n=0|\mathcal{E})\Big)\Big]&\leq \mathbb{E}\Big[\exp\Big(-\sum_{j=0}^{n-1}\mathbb{P}(\hat{Z}^{(j)}_n>0|\mathcal{E})\Big)\Big] .
\end{align*} 
It remains to prove that this last expectation  decreases  exponentially. From  (\ref{equachap}), we get 
\begin{equation}\label{eq1234}
\mathbb{P}(\hat{Z}^{(j)}_n=0|\mathcal{E})=\frac{p_{j,n}}{p_{j-1,n}} f'_j(f_{j,n}(0))= \frac{1-f_{j,n}(0)}{1-f_j(f_{j,n}(0))} f'_j(f_{j,n}(0))=  h_j\big(f_{j,n}(0)\big),  
\end{equation}
where for $s \in [0,1)$,
$$h(s):=\frac{f'(s)}{g(s)}, \qquad  g(s):=\frac{1-f(s)}{1-s}.$$
We will now show that $g(1)=f'(1)$, $h(1)=1$. As already noticed in  \cite{BK09},  for every $s\in[0,1)$
$$g(s)=\sum_{k=0}^\infty \frac{1- s^k}{1-s}\frac{f^{(k)}(0)}{k!}= \sum_{k=1}^\infty (1+s+s^2+\ldots + s^{k-1}) \frac{f^{(k)}(0)}{k!} .$$
Thus $f'(1)=g(1)=1$ and $h(1)=1$. Moreover,   $f'(0)\ne 1$ ensures that for every $k>1$ and $s<1$, $ks^{k-1}<(1+s+s^2+\ldots + s^{k-1})$, so
%  if $s<1$ (except for the case $f'(0)=1$, i.e. the corresponding random variable is one with probability one). 
 $h(s)<1$. A straightforward calculation shows that $h$ has exactly one minimum in some $s_0\in (0,1)$. Adding that $h$ is increasing for $s>s_0$ and $h(s_0)\leq h(0)$, we have every $t\in (0,1)$ that
\begin{align}\label{hmax} h(s)\leq \max\big\{h(0),h(t)\big\} \text{ for } s\leq t,\end{align}
%where $h(0)=\tfrac{f'(0)}{1-f(0)}$. 
First, we deal with $f_{j,n}(0)=\mathbb{P}(Z_n=0|\mathcal{E},Z_j=1)$ in (\ref{eq1234}). For this purpose, we use a truncation argument. Let $a\in\mathbb{N}$ be fixed for the moment and introduce 
\[ \bar{Q}(j):=Q(j), 1\leq j< a, \ \bar{Q}(a)=Q([a,\infty)). \]
We refrain from indicating the dependence on $a$ in our notation. The corresponding truncated random variables are denoted similarly, e.g. by $\bar{X}$, $\bar{S}$, $\bar{f}$. Note that $\bar{f}''(1)\leq a^2$.
By dominated convergence, we get that
\[\lim_{a\rightarrow\infty} \mathbb{E}[\bar{X}]=\mathbb{E}[X]>0.\]
Thus if $a$ is chosen large enough, $\bar{S}$ is still a random walk with positive drift and $\mathbb{E}[\bar{f}'(1)]>1$. Also note that with respect to the truncated offspring distributions, $\bar{Z}_n$ is stochastically 
smaller than $Z_n$ and thus $\mathbb{P}(\bar{Z}_n=0)\geq \mathbb{P}(Z_n=0)$. Applying this together with a well-known formula for the 
extinction probability (see e.g. \cite{agresti}[Lemma 2]), we get that
\begin{align*}
\mathbb{P}(Z_n=0|\mathcal{E},Z_j=1)\leq \mathbb{P}_1(\bar{Z}_n=0|\mathcal{E})&\leq 1- \frac{1}{e^{-\bar{S}_n}+\sum_{k=j}^{n-1} \bar{\eta}_{k+1} e^{-(\bar{S}_k-\bar{S}_j)}}
\\
&\leq 1-\frac{1}{a^2 \sum_{k=j}^\infty e^{-2\bar{X}_{k+1}-(\bar{S}_{k}-\bar{S}_j)}}, 
\end{align*}
where we used that $\bar{\eta}= \bar{f}''(1)/\bar{f}'(1)^2\leq a^2e^{-2\bar{X}}$ a.s. We now aim at bounding $\sum_{k=j}^\infty \exp(-2\bar{X}_{k+1}-(\bar{S}_{k}-\bar{S}_j))$. 
First, the assumption $Q(0)<1-\gamma$ implies that $\bar{X}\geq \log(\gamma)$ a.s. Thus
\begin{align*}
\mathbb{P}(Z_n=0|\mathcal{E},Z_j=1)&\leq \mathbb{P}_1(\bar{Z}_n=0|\mathcal{E})\leq 1-\frac{\gamma^2}{a^2 \sum_{k=j}^\infty e^{-(\bar{S}_{k}-\bar{S}_j)}}. 
\end{align*}
Next, we introduce the random walk $\breve{S}_n:=\bar{S}_n-\varepsilon n$ with $0<\varepsilon<\mathbb{E}[\bar{X}]$. 
It is still a random walk with positive drift and we have 
\begin{align*}
 f_{j,n}(0)&\leq 1-\frac{\gamma^2}{a^2\sum_{k=j}^\infty e^{-(\breve{S}_{k}-\breve{S}_j)}e^{-(k-j)\varepsilon}}. 
\end{align*}
Let us now consider  the prospective minima (see e.g. \cite{kersting053}[p.661]) of $\breve{S}$ which are defined by $\nu(0):=0$ and
\[\nu(j):=\inf\{n>\nu(j-1)  \ : \  \breve{S}_{k}>\breve{S}_n\ \forall k>n\}. \]
Then we can estimate for $j\geq 1$ (note that $\breve{S}_k\geq \breve{S}_{\nu(j)}$ for all $k\geq \nu(j)$, $j\geq 1$)
\begin{align*}
 f_{\nu(j),n}(0)&\leq 1-\frac{\gamma^2}{a^2\sum_{k=\nu(j)}^\infty e^{-(\breve{S}_{k}-\breve{S}_\nu(j))}e^{-(k-\nu(j))\varepsilon}} \\
 &\leq  1-\frac{\gamma^2}{a^2 \sum_{k=\nu(j)}^\infty e^{-(k-\nu(j))\varepsilon}} =1- \frac{\gamma^2(1-e^{-\varepsilon})}{a^2} .
\end{align*}
From  (\ref{hmax}), setting $d:=\gamma^2(1-e^{-\varepsilon})/a^2\in(0,1)$,  we get  for $j\geq 1$, 
\begin{align*}
 \mathbb{P}(\hat{Z}^{(\nu(j))}_n>0|\mathcal{E})= 1-h_{\nu(j)}\big(f_{\nu(j),n}(0)\big)& \geq   1-\max\{h(0),h_{\nu(j)}\big(1- d\big)\}\\
 &= \min\{1-h(0),1-h_{\nu(j)}\big(1- d\big)\}=:A_{\nu(j)} ,
\end{align*}
From the classical random walk theory, $U_j:=\nu(j)-\nu(j-1)$ 
(and also $Q_{\nu(j)}$) are i.i.d. random variables (see \cite{kersting053}).
We  now prove that for $\delta>0$ small enough that the probability
 that there are less than $\{\delta n\}$-many prospective minima is exponentially small. Note that $\mathbb{E}[\breve{X}]>0$ implies $\mathbb{E}[\nu(1)]<\infty$.
 Let $0<\delta<\mathbb{E}[\nu(1)]^{-1}$. Then
\begin{align}
\label{ctrlneg}
\mathbb{P}(\sharp\{j \geq 0 \ : \nu(j)\leq n\}<\delta n)\leq \mathbb{P}(\nu(\lceil \delta n\rceil)> n) \leq\mathbb{P}\Big(\sum_{j=1}^{\lfloor \delta n\rfloor}U_j >\frac{1}{\delta} n\delta\Big)\leq e^{-\delta n \Psi(\delta^{-1})},   
\end{align}
where $\Psi$ is the rate function of the process $(\sum_{j=1}^{ n}U_j)_n$, which is a random walk with nonnegative increments. 
Thus it remains to prove that $\Psi(\theta)>0$ for some $\theta>\mathbb{E}[U_1]=\mathbb{E}[\nu(1)]$. From large deviations theory,
 we just need to check that the tail of $\nu(1)$ decreases exponentially. This follows from 
\begin{align*}
 \mathbb{P}(\nu(1)>k)&\leq \mathbb{P}(S_j\leq 0 \ for \ some \ j>k) \leq \sum_{j=k}^\infty \mathbb{P}(S_j\leq 0) \\
&\leq \sum_{j=k}^\infty e^{-\breve{\Lambda}(0) j} = \ \frac{e^{-\breve{\Lambda}(0)k} }{1-e^{-\breve{\Lambda}(0)}}, 
\end{align*}
where $\breve{\Lambda}$ is the rate function of $\breve S$. This  rate function is proper since   $\log(1-\gamma)\leq \breve X\leq a$ a.s. Adding that  $\mathbb{E}[\breve X]>0$ ensures that
$\breve{\Lambda}(0)>0$ and we can conclude that $\Psi(\theta)>0$. \\
% Thus also the random variable $\nu(1)$ has exponential tails and there exists a proper rate function $\Psi$. 
Finally, we use that $A_{\nu(j)}$ are independent to get 
 \begin{align*}
\mathbb{P}_{z_0}(Z_n=z_0)&\leq \mathbb{E}\Big[\exp\Big(-\sum_{j=0}^{n-1}\mathbb{P}(\hat{Z}^{(j)}_n>0|\mathcal{E})\Big)\Big] \\
 &\leq \mathbb{P}(\sharp\{j \geq 0 \ :  \nu(j)\leq n\}<\delta n)+ \mathbb{E}\Big[\exp\Big(-\sum_{j=0}^{\lfloor\delta n\rfloor}A_{\nu(j)}\Big)\Big] \\
 &\leq \mathbb{P}(\sharp\{j \geq  0 \ : \nu(j)\leq n\}<\delta n)+ \mathbb{E}\Big[\exp\Big(-A_{\nu(1)}\Big)\Big]^{\lfloor\delta n\rfloor}.
 \end{align*}
Recalling   that $A_{\nu(j)} \geq 0$ and $\P(A_{\nu(j)} > 0)=\P(h(0)<1)=\P(f'(0) \ne 1)>0$, we get  
$$\mathbb{E}\Big[\exp\Big(-A_{\nu(j)}\Big)\Big]<1.$$
Then (\ref{ctrlneg}) ensures that  $\rho>0$.\qed\medskip\\

\begin{Rk} To get Proposition \ref{prop2}  (i), Assumption $\ref{technical2}$ can be replaced by assuming that there exists 
$c$ such that $\eta\leq c$ a.s. The proof is very similar. In this case, the truncation is not required and we may estimate
%\begin{align*}
$$\mathbb{P}(Z_n=0|\mathcal{E},Z_j=1)\leq \bar{\mathbb{P}}(Z_n=0)\leq 1- \frac{1}{e^{-S_n}+\sum_{k=j}^{n-1} \eta_{k+1} e^{-(S_k-S_j)}}\leq 1-\frac{1}{c \sum_{k=j}^\infty e^{-(S_{k}-S_j)}}.$$
%\end{align*}
\end{Rk}

\subsection{Proof of  Proposition \ref{prop2} (ii) : $\varrho\leq \Lambda (0)$}\label{section6}
Here, we prove the second part of Proposition \ref{prop2} which  ensures that $\varrho \leq \Lambda(0)$. It means 
 that small but positive values can always be realized by a suitable exceptional environment, which is 'critical'.
 We focus on the nontrivial case when $\Lambda(0)<\infty$. The proof of Proposition \ref{prop2} (ii) can then be splitted into two subcases, which correspond to the two following propositions.
\begin{prop}\label{prop0}
Under Assumption  \ref{axi} and $\P(X<0)>0$, we have
$\rho \leq \Lambda(0).$
\end{prop}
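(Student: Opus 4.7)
The plan is to produce environments of probability at least $e^{-\Lambda(0)n+o(n)}$ on which the branching process stays strictly positive and bounded (by a \emph{fixed} constant $K$) with uniformly positive probability, and then appeal to Lemma~\ref{rho_1} to identify the resulting exponential rate with $-\rho$. The right environments are the ones making $S$ look like a pinned non-negative excursion, which is exactly the picture behind the interpretation of $\Lambda(0)$ as the environmental stochasticity rate. I would carry this out via the exponential tilt already introduced in the preliminaries of Section~\ref{section6}.

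Let $s>0$ be the minimizer of $\lambda\mapsto\E[e^{-\lambda X}]$, which is finite, strictly positive (because $\E[X]>0$ and $\P(X<0)>0$), and unique by strict convexity; then $\E[Xe^{-sX}]=0$ and $\mu:=\E[e^{-sX}]=e^{-\Lambda(0)}$. Define the tilted law $\mathbf{P}$ at the level of environments by $\mathbf{P}(Q\in dq)=\mu^{-1}m_q^{-s}\P(Q\in dq)$; the conditional law of $(Z_n)$ given $\EE$ is unchanged, so for any $A_n\in\mathcal{F}_n$ one has the change-of-measure identity
\[
\P_{z_0}(A_n)\;=\;\mu^{n}\,\mathbf{E}_{z_0}\!\left[e^{sS_n}\mathbf{1}_{A_n}\right].
\]
Fix $c>0$ (large enough for Lemma~\ref{le_as}) and a constant $K$ to be chosen. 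On $\{L_n\ge 0,\,S_n\le c\}$ one has $e^{sS_n}\ge 1$, so restricting the expectation yields
\[
\P_{z_0}(1\le Z_n\le K)\;\ge\;\mu^{n}\,\mathbf{P}_{z_0}\!\left(1\le Z_n\le K,\,L_n\ge 0,\,S_n\le c\right).
\]
Now I want to show that the probability on the right is of order $n^{-3/2}$. The excursion probability alone satisfies $\mathbf{P}(L_n\ge 0,S_n\le c)\sim d\,n^{-3/2}$ by Lemma~\ref{le_as}, so it suffices to prove that $\liminf_n\mathbf{P}_{z_0}(1\le Z_n\le K\mid L_n\ge 0,S_n\le c)>0$ for a suitable $K$. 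The lower part comes from Lemma~\ref{le_critical}: the inequality $\mathbf{P}_{z_0}(Z_n>0\mid\EE)\ge\mathbf{P}_1(Z_n>0\mid\EE)$ transfers its conclusion to starting population $z_0$. The upper part comes from Markov's inequality, since on $\{S_n\le c\}$ we have $\mathbf{E}_{z_0}[Z_n\mid\EE]=z_0 e^{S_n}\le z_0 e^{c}$, whence $\mathbf{P}_{z_0}(Z_n>K\mid L_n\ge 0,S_n\le c)\le z_0 e^{c}/K$; taking $K$ large makes this less than half the liminf from Lemma~\ref{le_critical}. Combining, $\P_{z_0}(1\le Z_n\le K)\gtrsim e^{-\Lambda(0)n}n^{-3/2}$. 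Dividing by $n$ and passing to $\liminf$, while invoking Lemma~\ref{rho_1} with the constant (and hence sub-exponential) sequence $k_n\equiv K$, gives $-\rho\ge -\Lambda(0)$, i.e.\ $\rho\le\Lambda(0)$.

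The main obstacle is checking that the hypotheses of Lemma~\ref{le_critical} really apply under~$\mathbf{P}$. Centering is by construction; finite variance $\mathbf{Var}(X)<\infty$ follows from smoothness of the log-m.g.f.\ at the interior minimizer~$s$, using the exponential moment assumption. The delicate point is the truncated moment condition $\mathbf{E}[(\log^+\xi_Q(a))^{2+\varepsilon}]<\infty$: Assumption~\ref{axi} is stated as a conditional bound $\E[(\log^+\xi_Q(a))^{2+\varepsilon}\mid X>-x]<\infty$ for every $x>0$, and to transfer it one must write $\mathbf{E}[(\log^+\xi_Q(a))^{2+\varepsilon}]=\mu^{-1}\E[(\log^+\xi_Q(a))^{2+\varepsilon}e^{-sX}]$ and split the outer expectation according to the sign and size of $X$, using the finiteness of $\E[e^{-sX}]$ to absorb the tails. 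Once this technical transfer is in place, the rest of the argument is the routine tilt-plus-excursion scheme, and the case $\P(X\ge 0)=1$ (covered separately in Proposition~\ref{prop2}(ii)) falls out even more cheaply since there the excursion event has probability~$1$ under the tilt.
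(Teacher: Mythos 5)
The architecture you describe (exponential tilt, excursion event $\{L_n\geq 0,\,S_n\leq c\}$, Lemma~\ref{le_as} for the polynomial rate, Lemma~\ref{le_critical} for uniform positivity, Markov for the upper cutoff, Lemma~\ref{rho_1} to close) is exactly the paper's, but you have dropped the \emph{truncation} step that makes the scheme rigorous, and this leaves two genuine gaps.

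First, you assert that the minimizer $s>0$ of $\lambda\mapsto\E[e^{-\lambda X}]$ exists, is interior, and satisfies $\E[Xe^{-sX}]=0$. The only exponential-moment hypothesis in force is that $\E[e^{-sX}]<\infty$ for \emph{some} $s>0$; the effective domain may be a bounded interval, the infimum may be attained at its right endpoint, and then the first-order condition $\E[Xe^{-sX}]=0$ fails. Without centering, $S$ is not a recurrent random walk under $\mathbf{P}$ and Lemmas~\ref{le_as} and~\ref{le_critical} do not apply. The paper avoids this by first conditioning on $E_{x,n}=\{\min_i X_i>x\}$ with $x<0$: under $\P(\cdot\mid X>x)$ the negative tail is bounded, $\E[|X|e^{-\lambda X}\mid X>x]<\infty$ for every $\lambda\geq 0$, the map $\lambda\mapsto\E[e^{-\lambda X}\mid X>x]$ is everywhere smooth, and an interior zero $\nu_x$ of the derivative is guaranteed once $\P(x<X<0)>0$. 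The cost is that one only recovers $\Lambda(0)$ after letting $x\to-\infty$ at the very end, by monotone convergence of $\log\E[e^{-\nu_x X};X>x]$.

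Second, and more seriously, your proposed verification of the hypothesis of Lemma~\ref{le_critical} under $\mathbf{P}$ does not go through. You need $\mathbf{E}[(\log^+\xi_Q(a))^{2+\varepsilon}]=\mu^{-1}\E[(\log^+\xi_Q(a))^{2+\varepsilon}e^{-sX}]<\infty$, but Assumption~\ref{axi} only controls the \emph{conditional} moment $\E[(\log^+\xi_Q(a))^{2+\varepsilon}\mid X>-x]$ for each finite $x$. Splitting at a fixed level $x$ leaves a contribution $\E[(\log^+\xi_Q(a))^{2+\varepsilon}e^{-sX};X\leq -x]$ in which both factors are unbounded and correlated, and finiteness of $\E[e^{-sX}]$ does not dominate this term unless one imposes an unconditional moment bound on $\log^+\xi_Q(a)$, which is precisely what the assumption refuses to give. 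The paper's truncation makes this a one-line estimate: under the truncated tilt, $X>x$ holds $\mathbf{P}$-a.s., so $e^{\nu_x X}\geq e^{\nu_x x}$ and
\[
\E\big[(\log^+\xi_Q(a))^{2+\varepsilon}\mid X>x\big]\;=\;\mu\,\mathbf{E}\big[e^{\nu_x X}(\log^+\xi_Q(a))^{2+\varepsilon}\big]\;\geq\;\mu\,e^{\nu_x x}\,\mathbf{E}\big[(\log^+\xi_Q(a))^{2+\varepsilon}\big],
\]
which bounds the tilted moment by the conditional one for each fixed $x$. In short: the truncation is not an optional simplification but the device that simultaneously guarantees centering of the tilt and transfer of the moment assumption, and without it your proof has a genuine gap. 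Once you reinstate $E_{x,n}$, tilt the truncated law, run the excursion argument, and only then send $x\to-\infty$, you land on the paper's proof (the cosmetic difference that they take $k_n\to\infty$ slowly while you keep $K$ fixed is immaterial, since Lemma~\ref{rho_1} allows either).
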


\begin{prop} \label{partII}
 Assume that $\mathbb{P}(X\geq 0)=1$ and $\mathbb{P}(X=0)>0$. Then
\begin{align}
 \rho\leq -\log \mathbb{P}(X=0)=\Lambda(0).
\end{align}
\end{prop}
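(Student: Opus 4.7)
The plan is to produce a matching lower bound on $\P_{z_0}(Z_n = z_0)$ by restricting the environment to be critical at every generation and then showing that the resulting conditional probability decays only sub-exponentially. Set $\delta := \P(X=0) > 0$. A preliminary calculation identifies $\Lambda(0) = -\log \delta$ under the hypothesis $\P(X \geq 0) = 1$: for $\lambda \geq 0$ one has $\E[e^{\lambda X}] \geq 1$, while for $\lambda \to -\infty$ dominated convergence (using $e^{\lambda X} \leq 1$ a.s.) yields $\E[e^{\lambda X}] \to \P(X=0) = \delta$. Hence $\inf_\lambda \E[e^{\lambda X}] = \delta$.

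I would then work with the quantity $\phi_n = \E[Q_n(z_0) f_{0,n}(0)^{z_0-1} \prod_{i=1}^{n-1} f'_i(f_{i,n}(0))]$ from (\ref{interpretation2}), whose logarithm divided by $n$ converges to $-\varrho$. Restricting the expectation to the independent environment event $\mathcal{A}_n = \{X_1 = \cdots = X_n = 0\}$, which has probability $\delta^n$, and observing that on $\mathcal{A}_n$ the environments $(Q_i)_{1 \leq i \leq n}$ are i.i.d.\ with conditional law $\P^0(\cdot) := \P(\cdot \mid X=0)$, yields
\[
\phi_n \ \geq\ \delta^n\, \phi_n^0, \qquad \phi_n^0 := \E^0 \Big[Q_n(z_0) f_{0,n}(0)^{z_0-1} \prod_{i=1}^{n-1} f'_i(f_{i,n}(0))\Big].
\]
Under $\P^0$, every $f_i$ satisfies $f'_i(1) = 1$, so the associated random walk is identically zero and we are in a fully critical BPRE.

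The main step is to show that $\phi_n^0$ decays at worst sub-exponentially, i.e.\ $n^{-1} \log \phi_n^0 \to 0$. I would fix $M$ large enough that $p_M := \P^0(f''(1) \leq M)$ is positive (indeed arbitrarily close to $1$ if $f''(1) < \infty$ $\P^0$-a.s.) and restrict further to $\mathcal{B}_n^M := \{f''_i(1) \leq M,\, 1 \leq i \leq n\}$, of probability $p_M^n$. On $\mathcal{B}_n^M$, writing $p_{i,n} := 1 - f_{i,n}(0)$, the Taylor bound $f'_i(1 - p_{i,n}) \geq 1 - M p_{i,n}$ combined with the Kolmogorov--Agresti estimate $p_{i,n} \leq C/(n-i+1)$ for critical BPVEs of bounded normalized variance gives
\[
\log \prod_{i=1}^{n-1} f'_i(f_{i,n}(0)) \ \geq\ -2M \sum_{i=1}^{n-1} p_{i,n} \ \geq\ -C' \log n,
\]
whence $\phi_n^0 \geq c\, p_M^n\, n^{-C'}$. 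Combining gives $\liminf_n n^{-1} \log \phi_n \geq \log \delta + \log p_M$, and letting $M \to \infty$ so that $p_M \to 1$ yields $\varrho \leq -\log \delta = \Lambda(0)$.

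The main obstacle is the sub-exponential control of $\phi_n^0$ in the fully critical regime. Quantifying the survival probabilities $p_{i,n}$ uniformly requires second-moment control of the offspring distributions, which motivates the truncation on $\mathcal{B}_n^M$; the key point is that truncation loses no probability in the limit, i.e.\ that $f''(1) = \mathrm{Var}(Q)$ is $\P^0$-a.s.\ finite. Both ingredients are standard for critical Galton--Watson processes but must be carried out carefully in the random-environment setting under $\P^0$.
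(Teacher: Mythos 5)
Your reduction to the fully critical regime (restricting to $\{X_1=\cdots=X_n=0\}$, which contributes the $\delta^n=\P(X=0)^n$ factor and leaves a critical BPRE under $\P^0$) is exactly the paper's starting point. The gap is in how you control $\phi_n^0$. You invoke a ``Kolmogorov--Agresti estimate $p_{i,n}\leq C/(n-i+1)$ for critical BPVEs of bounded normalized variance,'' but that estimate requires a \emph{lower} bound on the variances, not the upper bound your truncation $\mathcal{B}_n^M=\{f''_i(1)\leq M\}$ provides. Indeed, Agresti's bound gives $p_{i,n}^{-1}\gtrsim \sum_{j>i}f''_j(1)$, so $p_{i,n}\lesssim 1/(c(n-i))$ only when $f''_j(1)\geq c>0$; if some $f''_j(1)$ can vanish (i.e.\ $\P^0(Q=\delta_1)>0$, which is allowed), the survival probability need not decay at all and $\sum_i p_{i,n}$ is not $O(\log n)$. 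Worse, even if you add a two-sided truncation $\{c\leq f''_i(1)\leq M\}$, the corresponding $p_M$ does not tend to $1$ when $\P^0(f''(1)=0)>0$ or $\P^0(f''(1)=\infty)>0$ — neither of which is excluded by the hypotheses of the proposition — so the resulting bound $\varrho\leq -\log\delta-\log p_M$ is strictly weaker than the claim.

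The paper sidesteps these moment issues entirely. After the same conditioning, it splits the product at $n-a$, bounds the last $a$ factors crudely (by a constant not depending on $n$, using $f'(s)\geq \varepsilon s^k$ on $\{Q([1,k])>\varepsilon\}$, a truncation whose probability is controllable without any moment assumption), and applies Jensen's inequality to the remaining convex function $\prod_{i=1}^{n-a} f'_i$. The inner argument of Jensen is then $\bar{\mathbf{E}}[f_{0,a-1}(0);\ldots]$, which tends to $1$ as $a\to\infty$ because the critical BPRE under $\P^0$ becomes extinct a.s.\ — a qualitative fact that needs no second moments. Letting $a\to\infty$ then $\varepsilon\to0$ gives the sharp bound. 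If you wish to salvage a quantitative variant of your strategy, the place to look is this soft extinction/Jensen argument rather than a pathwise Kolmogorov bound.
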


\begin{proof}[Proof of Proposition \ref{prop0}]
We recall that $\mathcal{I}:=\big\{ j \geq 1 \ :\ \P(Q(j)>0, Q(0)>0)>0\big\}$. We use a standard approximation argument 
 and consider the event $E_{x,n}:=\{\min_{i=1,\ldots, n} X_i>x \}$ for $x<0$. Then, $\P(X>x)>0$ since we are in the supercritical regime and 
for every $s\geq 0$, $\E\big[\vert X \vert e^{-sX} |X>x\big]<\infty$. As $\P(X<0)>0$, we may choose $x$ small enough such that $\mathbb{P}(x<X<0)>0$. Then
 $\E\big[\vert X \vert e^{-sX} |X>x\big]$ tends to infinity as $s\rightarrow \infty$.  
Moreover $\mathbb{E}[e^{-sX}|X>x]$ is differentiable with respect to $s$ for $s>0$. We call 
$s=\nu_x$ a point where the minimum  of this function is reached. In particular, 
$$\inf_{s\geq 0}  \mathbb{E}[e^{-sX}|X>x] =  \mathbb{E}[e^{-\nu_x X}|X>-x], \qquad \tfrac{d}{ds}\mathbb{E}[e^{-sX}|X>x]\big|_{s=\nu_x}=\E[Xe^{-\nu_xX}|X>x]=0.$$
The second part of Lemma \ref{rho_1} ensures that for every $z\in \mathcal{I}$ and  for every sequence $k_n=o(n)$, 
\begin{align}
 \lim_{n\rightarrow\infty} \tfrac{1}{n} \log\mathbb{P}_{z}(Z_{n}=z)
=\lim_{n\rightarrow\infty} \tfrac{1}{n} \log\mathbb{P}_{z}(1\leq Z_{ n}\leq k_n)= -\rho. \nonumber
\end{align}
Let us now change to the measure $\mathbf{P}$, defined by
\begin{align}
 \mathbf{P}(X\in dy) = \frac{e^{-\nu_x y} \mathbb{P}(X\in dy|X>x)}{\mu}\label{bfp}
\end{align}
where $\mu:=\mathbb{E}[e^{-\nu_x X}|X>x]$. Under $\mathbf{P}$, $\mathbf{E}[X]=0$ and $S$ is a recurrent random walk. \\
Let $c>0$ be so large such that $\mathbf{P}(L_n\geq 0,S_n\leq c)>0$ for every $n$. Then
\begin{align}
\mathbb{P}_{z}(1\leq Z_{n}\leq k_n|E_{x,n}) &= \mu^{n}\mathbf{E}\Big[\mathbb{P}_{z}(1\leq Z_{n}\leq k_n|
\mathcal{E}) e^{\nu_x S_{n}}\Big]\nonumber \\
&\geq \mu^{n}\mathbf{E}\Big[\mathbb{P}_{z}(1\leq Z_{n}\leq k_n|\mathcal{E});
 L_{n}\geq 0, S_{n}\leq c\Big].  \label{he2}
\end{align}
We note that $ \mathbb{P}_{z}(1\leq Z_{n}\leq k_n|\mathcal{E})= \mathbb{P}_{z}(Z_{n}>0|\mathcal{E})- \mathbb{P}_{z}(Z_{n}> k_n|\mathcal{E}) \ \ \text{a.s.}$ and by Markov inequality,
$ \mathbb{P}_{z}( Z_{n}> k|\mathcal{E})\leq \frac{ze^{S_{n}}}{k} \ \  \text{a.s.}$
It ensures that  
\begin{align*}
 \mathbb{P}_{z}(1\leq Z_{n}\leq k_n|E_{x,n})\geq \mu^{n} \mathbf{E} \Big[\mathbf{P}(Z_n>0|\mathcal{E})-ze^c/k_n; L_{n}\geq 0, S_{n}\leq c\Big] .
\end{align*}
Plugging this into (\ref{he2}) and setting $b_n:=\mathbf{P}(L_n\geq 0, S_n\leq c)$, we get
\begin{align}
 \mathbb{P}_{z}(1\leq Z_{n}\leq k_n)& \geq \mathbb{P}_z(1\leq Z_n\leq k_n \vert  E_{x,n})\mathbb{P}(E_{x,n}) \nonumber \\
%&\geq  \ \mu^{n} \mathbf{E}_{z}\Big[\ind_{\{Z_n>0\}}-ze^c/k_n\,  L_{n}\geq 0, S_{n}\leq c\Big]\mathbb{P}(E_{x,n})\nonumber \\
&= \ \mu^{n}
b_n \Big[\mathbf{P}(Z_n>0|L_{n}\geq 0, S_{n}\leq c)-ze^c/k_n\Big] \mathbb{P}(X>x)^n. \label{he3}
\end{align}
By construction of $\mathbf{E}$, $\mathbf{Var}(X)\leq \mu^{-1}\mathbb{E}[X^2 e^{-\nu_x X}|X>x]<\infty$. 
Then from Lemma \ref{le_as}, we have
$b_n=O(n^{-3/2})$ and 
$\lim_{n\rightarrow\infty} \tfrac{1}{n} \log b_n=0$. 
Let  $k_n=n^{-1/2}$. The fact that Assumption \ref{axi} holds under $\P$ entails that it holds also under $\mathbf{P}$.
Indeed,
\begin{align*}
\mathbb{E}\big[(\log^+ \xi_Q(a))^{2+\varepsilon}|X>x\big]&= \mu \mathbf{E}\big[e^{\nu_x X}(\log^+ \xi_Q(a))^{2+\varepsilon}\big]\\
&\geq \mu e^{\nu_x x}\mathbf{E}\big[(\log^+ \xi_Q(a))^{2+\varepsilon}\big],
\end{align*}
as $X>x$ ($x<0$) $\mathbf{P}$-a.s. Thus  we can use 
Lemma \ref{le_critical} and (\ref{he3}) to get 
\begin{align}
 \liminf_{n\rightarrow\infty} \tfrac{1}{n} \log \mathbb{P}_{z}(1\leq Z_{n}\leq k_n)&\geq \log \mu +\log \mathbb{P}(X>x)\nonumber\\
&=\log \mathbb{E}[e^{-\nu_x X}|X>x]+\log \mathbb{P}(X>x)\nonumber \\
&= -\sup_{s\leq 0}\big\{- \log \mathbb{E}[e^{-s X};X>x]\big\}.  \nonumber
\end{align}
By monotone convergence, we let  $x \rightarrow-\infty$ and
\begin{align}
 \liminf_{n\rightarrow\infty} \tfrac{1}{n} \log \mathbb{P}_{z}(1\leq Z_{n}\leq k_n)\geq -\sup_{s\leq 0}\big\{- \log \mathbb{E}[e^{-s X}]\big\}=-\Lambda(0). \nonumber
\end{align}
As $k_n=o(n)$, we apply Lemma $\ref{rho_1}$ to end up the proof.
\end{proof}

\begin{proof}[Proof of Proposition \ref{partII}]
As $\mathbb{P}(X\geq 0)=1$, we have $\mathbb{P}(S_n=0)=\mathbb{P}(X=0)^n$ and  $\Lambda(0)=-\log \mathbb{P}(X=0)$.\\
If $\mathbb{P}(Q(1)=1|X=0)=1$, the proof is trivial. So let us work with the assumptions  $\mathbb{P}(X=0)>0$ and $\mathbb{P}(Q(1)=1|X=0)<1$. 

 By conditioning on the environment, we get  for $z\in\mathcal{I}$ that
\begin{align}
 \mathbb{P}_{z}(Z_n=z) \geq \mathbb{P}(X=0)^n \cdot\mathbb{P}_{z}(Z_n=z|X_1=0,\ldots, X_n=0).\nonumber
\end{align}
For simplicity, we introduce a new measure $\bar{\mathbf{P}}$ on the space of all probability measures on $\mathbb{N}_0$ with expectation $1$. It is defined for 
 every measurable $A\subset \Delta$ by
\begin{align*}
\bar{\mathbf{P}}(Q\in A):= \frac{\mathbb{P}(Q\in A; m_Q=1)}{\mathbb{P}(m_Q=1)} = \frac{\mathbb{P}(Q\in A;m_Q=1)}{\mathbb{P}(X=0)}  .
\end{align*}
Note that $\bar{\mathbf{P}}(X=0)=1$ and there exists  $z\geq 1$ such that $\bar{\mathbf{P}}\big(Q(z)>0,Q(0)>0\big)>0$. 
With respect to $\bar{\mathbf{P}}$, $(Z_n\ :\ n\in\N_0)$ is still a branching process in random environment. By  Lemma \ref{rho_1}, there exists  $\bar{\rho} \in [0,\infty)$ such that
\begin{align}
\bar{\rho}&= -\lim_{n\rightarrow\infty} \tfrac{1}{n} \log \bar{\mathbf{P}}_z (Z_n=z)\nonumber \\
&=-\lim_{n\rightarrow\infty} \tfrac{1}{n} \log \mathbb{P}_{z}(Z_n=z|X_1=0,\ldots, X_n=0)\nonumber \\
&= -\lim_{n\rightarrow\infty} \tfrac{1}{n} 
\log \mathbb{E}\Big[Q_n(z)f_{0,n}^{z-1}(0)\prod_{i=1}^{n-1} f'_i\big(f_{i,n}(0)\big)\Big|X_1=0,\ldots, X_{n}=0\Big]. \nonumber
\end{align}
Next, we use convexity arguments. 
First,  for all $i\leq k$ and $s\in[0,1]$, $f_{i,k}(s)\geq 1-f'_{i,k}(1)(1-s)$. As $\bar{\mathbf{P}}(f_{i,k}'(1)=1)=1$, we get 
\begin{align}
 f_{i,k}(s)\geq s \quad  \bar{\mathbf{P}}\text{-a.s.}\label{pbarhelp}
\end{align}
Also recall that $f_{0,n}(0)=f_{0,n-1}(f_n(0))$ and by (\ref{pbarhelp}), $f_{i,n}(0)\geq f_n(0)=Q_n(0)$. Thus,  for every $a\in\mathbb{N}$, 
\begin{align}
 \liminf_{n\rightarrow\infty}& \tfrac{1}{n}\log\mathbb{E}\Big[Q_n(z)f_{0,n}^{z-1}(0)\prod_{i=1}^{n-1} f'_i\big(f_{i,n}(0)\big)\Big|X_1=0,\ldots, X_{n}=0\Big]
\nonumber \\
&\geq \liminf_{n\rightarrow\infty} \tfrac{1}{n}\log \bar{\mathbf{E}}\Big[Q_n(z)Q_n(0)^{z-1}\prod_{i=1}^{n-a} f'_i\big(f_{n-a,n}(0)\big)
\prod_{i=n-a+1}^{n-1} f'_i\big(f_{i,n}(0)\big)\Big].\nonumber
\end{align}
For $\varepsilon>0$  fixed, we choose $k=k_\varepsilon\in\mathbb{N}$ large enough such that $\bar{\mathbf{P}}(Q([1,k])>\varepsilon)\geq 1-\varepsilon$. Then, conditionally on $\{Q([1,k])>\varepsilon\}$,  $f'(s)\geq \sum_{j=1}^k Q(k) s^k\geq \varepsilon s^k$ a.s. for $s\in[0,1]$. Using this inequality, (\ref{pbarhelp}) and $f_{i,n}(0)\leq f_{i,n-1}(0)$, we have 
\begin{align*}
\bar{\mathbf{E}}&\Big[Q_n(z)Q_n(0)^{z-1}\prod_{i=1}^{n-a} f'_i\big(f_{n-a,n}(0)\big)\prod_{i=n-a+1}^{n-1} f'_i\big(f_{i,n}(0)\big)\Big| Q_{1},\ldots, Q_{n-a}\Big] \nonumber \\
&\geq \bar{\mathbf{E}}\Big[Q_n(z)Q_n(0)^{z-1}\prod_{i=1}^{n-a} f'_i\big(f_{n-a,n}(0)\big)\\
&\qquad \qquad \times \prod_{i=n-a+1}^{n-1} f'_i\big(Q_n(0)\big); Q_{n-a+1}([1,k])>\varepsilon,\ldots,Q_{n-1}([1,k])>\varepsilon \Big| Q_{1},\ldots, Q_{n-a}\Big]\\
 &\geq \bar{\mathbf{E}}\Big[Q_n(z)Q_n(0)^{z-1}\prod_{i=1}^{n-a} f'_i\big(f_{n-a,n-1}(0)\big)\\
&\qquad \qquad \times \prod_{i=n-a+1}^{n-1} \varepsilon Q_n(0)^k; Q_{n-a+1}([1,k])>\varepsilon,\ldots,Q_{n-1}([1,k])>\varepsilon\Big| Q_{1},\ldots, Q_{n-a}\Big]\\
 &\geq \bar{\mathbf{E}}\Big[\prod_{i=1}^{n-a} f'_i\big(f_{n-a,n-1}(0)\big); Q_{n-a+1}([1,k])>\varepsilon,\ldots,Q_{n-1}([1,k])>\varepsilon\Big| Q_{1},\ldots, Q_{n-a}\Big]\\
 &\qquad \qquad \times \bar{\mathbf{E}}\Big[\varepsilon^{a-2} Q_n(z)Q_n(0)^{z-1+(a-2)k}\Big],\nonumber 
\end{align*}
where the second expectation is strictly positive as $\bar{\mathbf{P}}(Q(z)>0,Q(0)>0)>0$.  The product of two generating functions (and thus the product of finitely many) is again convex. Indeed
 generating functions, as well as all their derivatives are convex, nonnegative and nondecreasing functions, thus
\[ (fg)''=f''g+2g'f'+fg'' \geq 0. \]
Similarly,  the product of the derivatives of generating functions is again convex. For more details on the product of nonnegative, convex and nondecreasing functions, we refer to \cite{marchi}. Applying Jensen's inequality to the convex function $\Pi_{i=1}^{n-a} f_i'$, the independence of the environments ensures that 
\begin{align*}
\bar{\mathbf{E}}\Big[&\prod_{i=1}^{n-a} f'_i\big(f_{n-a,n-1}(0)\big);Q_{n-a+1}([1,k])>\varepsilon,\ldots,Q_{n-1}([1,k])>\varepsilon\Big| Q_{1},\ldots, Q_{n-a}\Big]\\
& \geq \prod_{i=1}^{n-a} 
f'_i\big(\bar{\mathbf{E}}\big[f_{n-a,n-1}(0);Q_{n-a+1}([1,k])>\varepsilon,\ldots,Q_{n-1}([1,k])>\varepsilon\big| Q_1,\ldots,Q_{n-a}\big]\big) \\
&\qquad \qquad = \prod_{i=1}^{n-a} f'_i\big(\bar{\mathbf{E}}\big[f_{0,a-1}(0);Q_{1}([1,k])>\varepsilon,\ldots,Q_{a-1}([1,k])>\varepsilon\big]\big) \qquad \bar{\mathbf{P}}\text{-a.s.}
\end{align*}
 Using this inequality yields
\begin{align}
& \liminf_{n\rightarrow\infty} \tfrac{1}{n}\log\mathbb{E}\Big[f_{0,n}^{z-1}(0)\prod_{i=1}^{n} f'_i\big(f_{i,n}(0)\big)\Big|X_1=0,\ldots, X_n=0\Big]\nonumber \\
&\qquad \geq \liminf_{n\rightarrow\infty} \tfrac{1}{n}\log\Bigg(\bar{\mathbf{E}}\Big[\prod_{i=1}^{n-a} f'_i\big(\bar{\mathbf{E}}\big[f_{0,a-1}(0);Q_{1}([1,k])>\varepsilon,\ldots,Q_{a-1}([1,k])>\varepsilon\big]\big)\Big]  \nonumber\\
&\qquad \qquad \qquad \qquad \qquad \qquad \qquad 
\times \bar{\mathbf{E}}\Big[\varepsilon^{a-2} Q_n(z)Q_n(0)^{z-1}(0) Q_n(0)^{(a-2)k}\Big]\Bigg)\nonumber \\
&\qquad = \liminf_{n\rightarrow\infty} \tfrac{1}{n}\log\bar{\mathbf{E}}\Big[f'\big(\bar{\mathbf{E}}\big[f_{0,a-1}(0);Q_{1}([1,k])>\varepsilon,\ldots,Q_{a-1}([1,k])>\varepsilon\big]\big)\Big]^{n-a}\nonumber \\
&\qquad = \log\bar{\mathbf{E}}\Big[f'\big(\bar{\mathbf{E}}\big[f_{0,a-1}(0);Q_{1}([1,k])>\varepsilon,\ldots,Q_{a-1}([1,k])>\varepsilon\big]\big)\Big]. \nonumber
\end{align}
Finally, $Z$ is a critical branching process in random environment under the probability $\bar{\mathbf{P}}$
% (and also conditioned on $\{Q_{1}([1,k])>\varepsilon,\ldots,Q_{a-1}([1,k])>\varepsilon\}$), and 
so  $\bar{\mathbf{P}}(Z_{a-1}=0|\mathcal{E}) =f_{0,a-1}(0)\rightarrow 1$ $\bar{\mathbf{P}}$-a.s. 
as $a\rightarrow\infty$ (see e.g. \cite{smith69}). Letting  $a\rightarrow\infty$,  $\varepsilon\rightarrow 0$ and recalling that $\bar{\mathbf{P}}(Q([1,k])>\varepsilon)\geq 1-\varepsilon$ yields  by dominated convergence 
\begin{align*} \log\bar{\mathbf{E}}\Big[f'\big(\bar{\mathbf{E}}\big[f_{0,a-1}(0);Q_{1}([1,k])>\varepsilon,\ldots,Q_{a-1}([1,k])>\varepsilon\big]\big)\Big]\rightarrow \log\bar{\mathbf{E}}\big[f'(1)\big]=0. \nonumber
\end{align*}
Then,
\begin{align*}
 \liminf_{n\rightarrow\infty}& \tfrac{1}{n}\log\mathbb{E}\Big[f_{0,n}^{z-1}(0)\prod_{i=1}^{n-1} f'_i\big(f_{i,n}(0)\big)\Big|X_1=0,\ldots, X_n=0\Big]\geq 0 .
\end{align*}

This yields the claim.
\end{proof}
\begin{Rk}
 Note that the bound $f'(s)\leq f'(1)$ for $s\in[0,1]$ immediately yields that \linebreak
$\limsup_{n\rightarrow\infty} \tfrac{1}{n} \log \mathbb{P}_{z}(Z_n=z)\leq \log\mathbb{E}[X]$. In particular, we recover that for a BPRE with  $X=0$ a.s.,  
the probability of staying bounded but positive is not exponentially small. \\
\end{Rk}

\section{The linear fractional case : Proof of Corollary \ref{lf1} }
In this section, we assume that  the offspring distributions have  generating functions of linear fractional form, i.e.
\begin{eqnarray}
 f(s)\ =\ 1- \frac{1-s}{m^{-1}+ b\ m^{-2} (1-s)/2},  \nonumber
\end{eqnarray}
where $m=f'(1)$ and $b=f''(1)$. 
\medskip \\
Under this assumption, direct calculations with generating functions are feasible, i.e. we can explicitly calculate the generating function of $Z_n$, 
conditioned on the environment. We also assume that 
$\mathbb{E}[|X|]<\infty$,  $\mathbb{P}(Z_1=0)>0$ and that either  $\P(X\geq 0)=1$ or Assumption \ref{axi} hold, such that Proposition \ref{prop2} (ii) holds.\\
In the next subsection, we prove Corollary \ref{lf1}. It  gives an expression of $\varrho$ which depends on the sign of $\E[X\exp(-X)]$. Afterwards, we prove Corollary \ref{corMRCA} which concerns the MRCA. Let us define 
 $\eta_k:=  b_{k} m^{-2}_{k}/2$ and recall that $f_{j,n}=f_{j+1}\circ\ldots\circ f_n$. Then for all $n\in\mathbb{N}$ and $s\in[0,1]$ (see \cite[p. 156]{kozlov06})
\begin{align*}
 f_{0,n}(s)&=1-\frac{(1-s)}{e^{-S_n}+(1-s) \sum_{k=0}^{n-1} \eta_{k+1} e^{-S_{k}}}.
\end{align*}
resp.
\begin{align}
 f_{j,n}(0)&=1-\frac{1}{e^{-(S_n-S_{j})}+\sum_{k=j+1}^{n} \eta_k e^{-(S_{k-1}-S_j)}}.\label{genf1}
\end{align}
Let us state some direct consequences resulting from this formula which will be used later. Taking the derivative, 
\begin{align}
 f_{0,n}'(s)&=\frac{e^{-S_n}}{\big(e^{-S_n}+(1-s)\sum_{k=0}^{n-1} \eta_{k+1} e^{-S_{k}}\big)^2}.\label{genf20}
\end{align}
Note that for every $s\in [0,1)$, applying (\ref{genf1})
\begin{align}
f_{0,n}'(s) (1-s)^2&= \frac{e^{-S_n}}{\big((1-s)^{-1}e^{-S_n}+\sum_{k=0}^{n-1} \eta_{k+1} e^{-S_{k}}\big)^2}\nonumber\\
&\leq e^{-S_n} \frac{1}{\big(e^{-S_n}+\sum_{k=0}^{n-1} \eta_{k+1} e^{-S_{k}}\big)^2}\nonumber\\
&= e^{-S_n} \big(1-f_{0,n}(0)\big)^2 = e^{-S_n} \mathbb{P}(Z_n>0|\mathcal{E})^2. \label{genf30}
\end{align}
Moreover,
\begin{align}
 f_j'(s)=\frac{e^{-X_j}}{(e^{-X_j}+\eta_j(1-s))^2} \label{genf2}
\end{align}
and we can now compute the value of $\varrho$.\\

\subsection{Determination of the value of $\varrho$}\label{linearfrac}
By Proposition \ref{prop2} (ii), $\rho \leq \Lambda(0)$. Then it remains to prove that $\rho=-\log\mathbb{E}\big[ e^{-X}\big]$ if $\mathbb{E}[Xe^{-X}]\geq 0$ and $\rho\geq \Lambda(0)$ 
otherwise.  For that purpose, we  use the representation
of $\rho$ in terms of generating functions. Combining (\ref{genf1})  and (\ref{genf2})
we get 
\begin{align}
 f_j'\big(f_{j,n}(0)\big)&=e^{-X_j}\big(e^{-X_j}+\tfrac{\eta_j}{e^{-(S_n-S_{j})}+\sum_{k=j+1}^{n} \eta_{k} e^{-(S_{k-1}-S_j)}}\big)^{-2} \nonumber\\
&=e^{-X_j} \Big(\frac{e^{-(S_n-S_{j})}+\sum_{k=j+1}^{n} \eta_k e^{-(S_{k-1}-S_j)}}{e^{-(S_n-S_{j-1})}+\sum_{k=j}^{n} \eta_{k} e^{-(S_{k-1}-S_{j-1})}}\Big)^2 \nonumber \\
&= e^{-X_j} \Big(\frac{\mathbb{P}(Z_n>0|Z_{j-1}=1,\mathcal{E})}{\mathbb{P}(Z_n>0|Z_{j}=1,\mathcal{E})}\Big)^2. \nonumber 
\end{align}
Since   $\mathbb{P}(Z_n>0|Z_n=1)=1$, we get 
\bea
\label{utile}
 \P_1(Z_n=1)&=&\E_1\Big[\prod_{j=1}^{n}f_j'\big(f_{j,n}(0)\big)\Big] \nonumber \\
&=&\E_1\Big[\prod_{j=1}^{n}e^{-X_j}\frac{\mathbb{P}(Z_n>0|Z_{j-1}=1,\mathcal{E})^2}{\mathbb{P}(Z_n>0|Z_{j}=1,\mathcal{E})^2}\Big] \nonumber \\
&=& e^{-S_{n}} \mathbb{P}(Z_n>0|\mathcal{E},Z_0=1)^2.
\eea
First, we consider the case $\mathbb{E}[Xe^{-X}]\geq 0$. Bounding the probability above  by $1$ immediately yields 
$$\mathbb{E}\Big[\prod_{j=1}^{n}f_j'\big(f_{j,n}(0)\big) \Big]\leq \mathbb{E}[e^{-S_n}]=\mathbb{E}[e^{-X}]^n.$$ 
Thus  $\rho\geq -\log \mathbb{E}[e^{-X}]$. To get the converse inequality,
 we  change to the measure $\hat{\mathbb{P}}$, defined by 
\begin{align}
 \hat{\mathbb{P}}(X\in dx) &= \frac{e^{-x}\mathbb{P}(X\in dx)}{\mathbb{E}[e^{-X}]}. \nonumber
\end{align}
This measure is well-defined as $\mathbb{E}\big[X^2e^{-X}\big]<\infty$ implies $\mathbb{E}\big[e^{-X}\big]<\infty$. Then by Jensen's inequality, 
\begin{align}
\mathbb{E}_1\Big[e^{-S_{n}} &\mathbb{P}(Z_n>0|\mathcal{E})^2\Big] = \mathbb{E}\big[e^{-X}\big]^{n} \hat{\mathbb{E}}\big[\mathbb{P}_1(Z_n>0|\mathcal{E})^2\big]\geq \mathbb{E}[e^{-X}]^{n} \hat{\mathbb{P}}_1(Z_n>0)^2. \nonumber 
\end{align}
We observe that $\hat{\mathbb{E}}[X]= \mathbb{E}[Xe^{-X}]\geq 0$, such that under $\hat{\P}$, $S_n$ is a random walk with nonnegative drift. It ensures that the branching process is still 
critical or supercritical with respect to $\hat{\mathbb{P}}$. Thus, under Assumption \ref{axi} and as $\hat{\mathbb{E}}[X]=\mathbb{E}[X^2e^{-X}]<\infty$,
%\marginpar{I changed the conditions, ok?} 
$\hat{\mathbb{P}}(Z_n>0)>Cn^{-\tfrac 12}$ for some $C>0$ as $n\rightarrow\infty$ (see e.g. \cite{kersting053} for the critical case, whereas $\mathbb{P}(Z_n>0)$ has a positive limit in the supercritical case). Letting $n\rightarrow \infty$ and adding that $1 \in \mathcal{I}$, we get 
\begin{align*}
 \rho=-\lim_{n\rightarrow\infty}\tfrac 1n \log  \mathbb{E}\Big[\prod_{j=1}^{n}f_j'\big(f_{j,n}(0)\big) \Big]\leq -\log\mathbb{E}[e^{-X}] .
\end{align*}
Secondly, we consider $\mathbb{E}[Xe^{-X}]<0$. Then there exists   $\nu\in(0,1)$ such that $\mathbb{E}[Xe^{-\nu X}]=0$ and we 
change to the measure $\mathbf{P}$ defined in (\ref{bfp}) (here without truncation).
Applying this change of measure and the well-known estimate $\mathbb{P}(Z_n>0|\mathcal{E})\leq e^{L_n\wedge 0}$ a.s., we get that
\begin{align*}
 \mathbb{E}\Big[e^{-S_{n}} &\mathbb{P}(Z_n>0|\mathcal{E},Z_0=1)^2\Big]\leq \mathbb{E}[e^{-\nu X}]^{n}\mathbf{E}\Big[e^{(-1+\nu)S_{n}+2L_n\wedge 0}\Big].
\end{align*}
Note that $L_n\wedge 0\leq \min(S_n,0)$ and $\nu\in(0,1)$  that $(-1+\nu)S_{n}+2L_n\wedge 0\leq 0$, and thus 
 \begin{align*}
 \mathbb{E}\Big[e^{-S_{n}} &\mathbb{P}(Z_n>0|\mathcal{E},Z_0=1)^2\Big]\leq \mathbb{E}[e^{-\nu X}]^{n}.
\end{align*}
This  yields $\rho\geq -\log \mathbb{E}[e^{-\nu X}]=\Lambda(0)$ since   $\Lambda(0)=\sup_{s\leq 0} \{-\log\mathbb{E}[e^{sX}]\}$ and  the condition 
$\mathbb{E}[Xe^{-\nu X}]=0$ implies that the supremum is taken in $s=-\nu$.

\subsection{Proof of the limit theorems for the MRCA}\label{secmrca}
The three cases (i-ii-iii) in Corollary \ref{corMRCA} result respectively from Lemma \ref{le1}, Lemmas \ref{le2}, \ref{mrcaint2}, \ref{mrcaint3} and Lemma \ref{le3} below. In all this Section, we assume that the 
assumptions of Corollary \ref{lf1} are met, i.e.  that $\mathbb{E}[X^2e^{-X}]<\infty$, $\mathbb{E}[|X|]<\infty$ and either $\mathbb{P}(X\geq 0)=1$ or 
Assumption \ref{axi} holds. 
\begin{lemma}\label{le1}
If $\mathbb{E}[Xe^{-X}]<0$, then
\[\liminf_{n\rightarrow\infty} \mathbb{P}_1(MRCA_n=n|Z_n=2)\in (0,1)\  \]
and
\[\liminf_{n\rightarrow\infty} \mathbb{P}_1(MRCA_n=1|Z_n=2)\in (0,1). \]
\end{lemma}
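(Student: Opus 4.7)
My plan is to establish the two positive liminfs separately; the upper bound $<1$ then comes for free from disjointness, since $\{MRCA_n=n\}$ and $\{MRCA_n=1\}$ are incompatible for $n\ge 2$, and by subadditivity of $\liminf$ one gets
\[
\liminf_{n} \P_1(MRCA_n=n\mid Z_n=2) \;\leq\; 1 - \liminf_{n} \P_1(MRCA_n=1\mid Z_n=2) \;<\; 1
\]
as soon as both liminfs are strictly positive (and symmetrically). So the whole lemma reduces to the two lower bounds.

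The first step is to derive clean quenched expressions for the two joint probabilities. The event $\{Z_n=2,MRCA_n=n\}$ corresponds to the root splitting into exactly two surviving lineages, while $\{Z_n=2,MRCA_n=1\}$ corresponds to both final individuals having the same parent at generation $n-1$. Decomposing $f_{0,n}=f_1\circ f_{1,n}$ resp.\ $f_{0,n}=f_{0,n-1}\circ f_n$ and differentiating gives
\begin{align*}
\P_1(Z_n=2, MRCA_n=n \mid \mathcal{E}) &= \tfrac{1}{2}\, f_1''(f_{1,n}(0))\, (f_{1,n}'(0))^2,\\
\P_1(Z_n=2, MRCA_n=1 \mid \mathcal{E}) &= \tfrac{1}{2}\, f_{0,n-1}'(f_n(0))\, f_n''(0).
\end{align*}
Plugging in (\ref{genf1})--(\ref{genf2}) and writing $A_n:=e^{-S_n}$, $B_n:=\sum_{k=0}^{n-1}\eta_{k+1}e^{-S_k}$, a direct simplification (using $e^{-X_1}(A_n^{(1)}+B_n^{(1)})=A_n+B_n-\eta_1$ and its analogue at the right endpoint) reduces these to
\[
\frac{\eta_1\, e^{-2S_n}}{(A_n+B_n)^3(A_n+B_n-\eta_1)} \qquad\text{and}\qquad \frac{\eta_n\, e^{-S_n}}{(A_n+B_n)^2(e^{-X_n}+\eta_n)},
\]
to be compared with $\P_1(Z_n=2\mid\mathcal{E})=e^{-S_n}B_n/(A_n+B_n)^3$ coming from the same computation.

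Next I would apply the exponential tilt already used in Section~\ref{linearfrac}: let $\nu\in(0,1)$ be defined by $\E[Xe^{-\nu X}]=0$ (well-defined since $\E[Xe^{-X}]<0$), and let $\mathbf{P}$ be the tilted measure with density $e^{-\nu X}/\E[e^{-\nu X}]$. Under $\mathbf{P}$, $S$ is centred with finite variance, and each of the three probabilities factors as $\E[e^{-\nu X}]^n=e^{-n\Lambda(0)}$ times a $\mathbf{P}$-expectation of the same integrand with an additional tilt $e^{(\nu-2)S_n}$ (for $MRCA_n=n$) or $e^{(\nu-1)S_n}$ (for $MRCA_n=1$ and for the normalising $\P_1(Z_n=2)$). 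The excursion asymptotics of Section~5.1 --- Lemmas~\ref{le_as1}, \ref{le_as}, \ref{le_critical} and Corollary~\ref{corexp} --- then pin each of these expectations to order $n^{-3/2}$ with a strictly positive constant: the dominant trajectories satisfy $L_n\ge 0$ and $S_n\leq c$, on which $A_n=e^{-S_n}$ stays of order $1$ and $B_n$ converges in law under the conditioned measures $\mathbf{P}^+$/$\mathbf{P}^-$, exactly as in the proof of Proposition~\ref{prop2}(ii). Taking ratios of the resulting $n^{-3/2}$ constants yields both $\liminf_n\P_1(MRCA_n=n\mid Z_n=2)>0$ and $\liminf_n\P_1(MRCA_n=1\mid Z_n=2)>0$.

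The main technical obstacle is ensuring that the $n^{-3/2}$ leading constants are \emph{strictly} positive. The numerators of the two MRCA-expressions carry the random factors $\eta_1$ at the root and $\eta_n,\,e^{-X_n}$ at the last generation, localised at the two endpoints of the excursion. One must therefore combine the $\mathbf{P}^+$-limit controlling the ``ascent'' of $S$ at the beginning (where $B_n$ is governed by the starting environment, hence by $\eta_1$) with the $\mathbf{P}^-$-limit/duality controlling the ``descent'' at the end (where the factor $\eta_n/(e^{-X_n}+\eta_n)$ sits), and show via dominated convergence that the joint integrand remains bounded below in the limit. Non-triviality of $\eta$ in the linear-fractional case with $\P(Z_1=0)>0$ ensures $\P(\eta>0)=1$, so this boils down to an integrability/tightness check in the spirit of \cite{abkv10, kersting053}; it is the only point requiring genuine effort.
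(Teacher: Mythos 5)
Your proof takes a genuinely more computational route than the paper's. You derive \emph{exact} quenched expressions for all three probabilities by differentiating the linear-fractional generating functions (your simplifications via $e^{-X_1}(A_n^{(1)}+B_n^{(1)})=A_n+B_n-\eta_1$ and its analogue at $n$ are correct), and then propose to extract exact $n^{-3/2}$ asymptotics under the tilt $e^{-\nu X}$ for each and take ratios. The paper avoids exact formulas entirely: for $MRCA_n=n$ it lower bounds the numerator by the single-spine event (two children of the root, each a singleton lineage), and it replaces the denominator $\mathbb{P}_1(Z_n=2)$ by $\mathbb{P}_1(Z_n=1)$ using the geometric decay of LF weights (\ref{relLF}); this reduces everything to the clean quantity $\mathbb{E}\big[\mathbb{P}_1(Z_n=1|\mathcal{E})^2\big]/\mathbb{P}_1(Z_n=1)$ and thence, via (\ref{eq_p1}), to expectations of $e^{-\theta S_n+2L_n}$ handled by Lemmas \ref{le_as1}--\ref{le_critical}. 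The $MRCA_n=1$ case is dispatched by the trivial lower bound $\mathbb{P}_1(Z_{n-1}=1)\mathbb{P}(Z_1=2)$. Your approach buys sharper constants (and would give actual limits rather than just liminfs), but it also exposes you to more technical debt: (a) you need integrability of factors like $\eta_1$ and of $1/(A_n+B_n-\eta_1)$ under the tilted excursion measure, which may force moment assumptions on $f''(1)$ not present in Corollary \ref{corMRCA}(i); (b) your claim that $\P(\eta>0)=1$ is not correct in general --- a linear-fractional law with $b=0$ and $m<1$ has $\P(Z_1=0)>0$ but $\eta=0$; what is true (and what you actually need) is $\P(\eta>0)>0$, which does follow here since $\E[X]>0$ forces $\P(m>1)>0$ and hence $\P(Z_1\geq 2)>0$; and (c) the ``dominated convergence / tightness check'' you defer is exactly where all the work is --- showing the limiting $\mathbf{P}^+$/$\mathbf{P}^-$ integrals are strictly positive is the content of Lemma \ref{le_critical} and Proposition 2.5 of \cite{abkv10}, and your integrands are not literally the ones covered there, so one must still lower-bound them by ones that are, which is what the paper's cruder bounds do directly. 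Finally, your observation that both $\liminf<1$ follow from disjointness of $\{MRCA_n=1\}$ and $\{MRCA_n=n\}$ once both $\liminf>0$ are known is correct and is implicitly the reason the paper's proof only establishes positivity.
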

\begin{proof} 
Conditionally on $\mathcal{E}$, the branching property holds and ensures that
\begin{align*}
 \mathbb{P}_1(MRCA_n=n|Z_n=2)&\geq \mathbb{P}_1(Z_1=2)\cdot\frac{\mathbb{E}\big[\mathbb{P}_1(Z_{n-1}=1|\mathcal{E})^2 \big]}{\mathbb{P}_1(Z_{n}=2)},
\end{align*}
which corresponds to two subtrees being founded by $Z_0=1$ and staying equal to 1 in all generations. \\
Next, we recall that linear fractional offspring distributions are stable with respect to compositions and  have geometrically decaying probability weights (see e.g. \cite{kozlov06}). In particular, 
\begin{align}
\label{relLF}
\mathbb{P}_1(Z_{n}=2|\mathcal{E})\leq\mathbb{P}_1(Z_{n}=1|\mathcal{E})
\end{align}
and therefore
\begin{align}
\label{qe1}
 \mathbb{P}_1(MRCA_n=n|Z_n=2)&\geq \mathbb{P}_1(Z_1=2)\cdot\frac{\mathbb{E}\big[\mathbb{P}_1(Z_{n-1}=1|\mathcal{E})^2 \big]}{\mathbb{P}_1(Z_{n}=1)}.
\end{align}
 Note that $\mathbb{E}[Xe^{-X}]<0$ implies that there exists  $\nu\in (0,1)$ such that $\mathbb{E}[Xe^{-\nu X}]=0$. Thus we can apply the same change of measure as in the proof of 
Proposition \ref{prop0}. With the definition of $\mathbf{P}$ therein (again without truncation), we get 
\begin{align}\label{eq_main1}
 \frac{\mathbb{E}[\mathbb{P}_1(Z_n=1|\mathcal{E})^2]}{\mathbb{P}_1(Z_n=1)}= \frac{\mathbf{E}[e^{\nu S_n}\mathbb{P}_1(Z_n=1|\mathcal{E})^2]}{\mathbf{E}[e^{\nu S_n}\mathbb{P}_1(Z_n=1|\mathcal{E})]} .
\end{align}
From (\ref{utile}), we know that
\begin{align}\label{eq_p1}
\mathbb{P}_1(Z_n=1|\mathcal{E})=f_{0,n}'(1)=e^{-S_n} \mathbb{P}_1(Z_n>0|\mathcal{E})^2 \leq e^{-S_n+2L_n} \ \text{a.s.} 
\end{align}
Combining this  with Jensen inequality yields for every $c>0$,
\begin{align}\label{mrca_11}
\mathbf{E}[e^{\nu S_n}\mathbb{P}_1(Z_n=1|\mathcal{E})^2]& \geq \mathbf{E}\big[e^{-2S_n+\nu S_n} \mathbb{P}_1(Z_n>0|\mathcal{E})^4 ; L_n\geq 0, S_n<c\big] \nonumber\\
&\geq e^{(-2+\nu)c} \mathbf{E}\big[\mathbb{P}_1(Z_n>0|\mathcal{E})^4; L_n\geq 0, S_n<c\big]\nonumber \\
&\geq e^{(-2+\nu)c}\mathbf{P}( L_n\geq 0, S_n<c) \mathbf{E}\big[\mathbb{P}_1(Z_n>0|\mathcal{E})^4| L_n\geq 0, S_n<c\big]\nonumber\\
& \geq e^{(-2+\nu)c}\mathbf{P}( L_n\geq 0, S_n<c) \mathbf{P}(Z_n>0| L_n\geq 0, S_n<c)^4 \nonumber\\
&\geq d\ \mathbf{P}( L_n\geq 0, S_n<c),
\end{align}
for some constant $d>0$, where the last line follows from Lemma \ref{le_critical}.
For the denominator in (\ref{eq_main1}), by (\ref{eq_p1}), we get similarly
\begin{align*}
 \mathbf{E}[e^{\nu S_n}\mathbb{P}_1(Z_n=1|\mathcal{E})]\leq \mathbf{E}[e^{-(1-\nu) S_n+2L_n}].
\end{align*}
Finally, we use $\E[X^2\exp(-X)]$ to ensure that $\mathbf{E}[X^2]<\infty$ and apply Lemmas \ref{le_as1} and \ref{le_as}. Then, 
\be
\liminf_{n\rightarrow\infty}   \frac{\mathbf{E}[e^{\nu S_n}\mathbb{P}_1(Z_n=1|\mathcal{E})^2]} {\mathbf{E}[e^{\nu S_n}\mathbb{P}_1(Z_n=1|\mathcal{E})] } \geq \liminf_{n\rightarrow\infty}  \frac{\mathbf{P}( L_n\geq 0, S_n<c)}{\mathbf{E}[e^{-(1-\nu) S_n+2L_n}]}>0.\nonumber
\ee
Recalling (\ref{qe1})   yields the first part of the lemma. \\
Similarly, we note that by the Markov  property,
 \begin{align*}
 \mathbb{P}_1(MRCA_n=1,Z_n=2)\geq \mathbb{P}_1(Z_{n-1}=1)\mathbb{P}(Z_1=2).
 \end{align*}
 Recalling that $\mathbb{P}_1(Z_{n-1}=1)\geq \mathbb{P}_1(Z_{n-1}=2)$ from $(\ref{relLF})$ yields the second claim. 
\end{proof}

The next three lemmas cover the  'intermediate' regime. First, we prove that the probability of $\{MRCA_n=x_n\}$ doesn't decay exponentially for every $1 \leq x_n \leq n$.
\begin{lemma}\label{le2}
If  $\mathbb{E}[Xe^{-X}]=0$, then for every $1 \leq x_n \leq n$
\[\lim_{n\rightarrow\infty}\tfrac 1n \log\mathbb{P}_1(MRCA_{n}=x_n|Z_{n}=2)=0. \]
\end{lemma}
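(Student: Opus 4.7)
The upper bound is immediate: since \(\{MRCA_n=x_n,Z_n=2\}\subset\{Z_n=2\}\) the conditional probability is at most \(1\), so \(\limsup_{n\to\infty}\tfrac1n\log\mathbb{P}_1(MRCA_n=x_n\,|\,Z_n=2)\leq 0\). All the work lies in the matching lower bound, uniformly in \(x_n\in[1,n]\).

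My plan is to exhibit a specific genealogy realising the event: a single lineage of length \(n-x_n\), a binary split at generation \(n-x_n+1\), and two size-\(1\) lineages of length \(x_n-1\). Conditioning on the environment and using the branching property together with the independence of \(\mathcal{E}_{[1,n-x_n]}\), \(Q_{n-x_n+1}\) and \(\mathcal{E}_{[n-x_n+2,n]}\),
\begin{align*}
\mathbb{P}_1(MRCA_n=x_n,Z_n=2)&\geq \mathbb{E}\big[\mathbb{P}_1(Z_{n-x_n}=1\,|\,\mathcal{E})\,Q_{n-x_n+1}(2)\,\mathbb{P}_1(Z_{x_n-1}=1\,|\,\theta^{n-x_n+1}\mathcal{E})^{2}\big]\\
&= \mathbb{P}_1(Z_{n-x_n}=1)\cdot\mathbb{E}[Q(2)]\cdot\mathbb{E}\big[\mathbb{P}_1(Z_{x_n-1}=1\,|\,\mathcal{E})^{2}\big].
\end{align*}
In the linear fractional case with \(\mathbb{P}(Z_1=0)>0\) one has \(\eta>0\) a.s., which forces \(Q(2)>0\) a.s. and so \(\mathbb{E}[Q(2)]>0\). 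The first factor satisfies \(\mathbb{P}_1(Z_{n-x_n}=1)=e^{-(n-x_n)\rho+o(n)}\) uniformly by Theorem \ref{theo_rho1} and Corollary \ref{lf1}.

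The crux is to establish the matching lower bound for the second-moment factor, namely \(\mathbb{E}[\mathbb{P}_1(Z_m=1\,|\,\mathcal{E})^{2}]\geq e^{-m\rho+o(m)}\) uniformly in \(m\). Using identity (\ref{utile}),
\[\mathbb{E}\big[\mathbb{P}_1(Z_m=1\,|\,\mathcal{E})^{2}\big]=\mathbb{E}\big[e^{-2S_m}\mathbb{P}_1(Z_m>0\,|\,\mathcal{E})^{4}\big].\]
Since \(\mathbb{E}[Xe^{-X}]=0\) in the intermediate regime, the exponential tilt \(d\mathbf{P}/d\mathbb{P}=e^{-X}/\mathbb{E}[e^{-X}]\) introduced in Section \ref{linearfrac} makes \(S\) a centred random walk with finite variance (using \(\mathbb{E}[X^{2}e^{-X}]<\infty\)), and
\[\mathbb{E}\big[\mathbb{P}_1(Z_m=1\,|\,\mathcal{E})^{2}\big]=e^{-m\rho}\,\mathbf{E}\big[e^{-S_m}\mathbb{P}_1(Z_m>0\,|\,\mathcal{E})^{4}\big].\]
Restricting to the event \(\{L_m\geq 0,\,S_m\leq c\}\) with \(c\) large and applying Jensen's inequality to the conditional expectation yields
\[\mathbb{E}\big[\mathbb{P}_1(Z_m=1\,|\,\mathcal{E})^{2}\big]\geq e^{-m\rho-c}\,\mathbf{P}(L_m\geq 0,S_m\leq c)\,\mathbf{P}(Z_m>0\,|\,L_m\geq 0,S_m\leq c)^{4}.\]
Lemma \ref{le_as} says the excursion probability decays only polynomially (like \(m^{-3/2}\)), and Lemma \ref{le_critical} says the conditional survival probability stays bounded away from zero. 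Together they give the desired sub-exponential estimate.

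Combining the three ingredients, \(\mathbb{P}_1(MRCA_n=x_n,Z_n=2)\geq e^{-(n-x_n)\rho+o(n)}\cdot e^{-(x_n-1)\rho+o(n)}=e^{-n\rho+o(n)}\), uniformly in \(x_n\in[1,n]\) (for small values of \(n-x_n\) or \(x_n-1\) the corresponding factor is bounded below by a positive constant, which is absorbed into the \(o(n)\)). Since \(\mathbb{P}_1(Z_n=2)=e^{-n\rho+o(n)}\) by Theorem \ref{theo_rho1} and Corollary \ref{lf1}, the conditional probability is \(e^{o(n)}\), giving \(\liminf_{n\to\infty}\tfrac1n\log\mathbb{P}_1(MRCA_n=x_n\,|\,Z_n=2)\geq 0\) and hence the claim. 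The main technical obstacle is precisely the second-moment lower bound: everything else is bookkeeping, but that step forces one to simultaneously exploit the tilt \(\nu=1\) that characterises the intermediate regime and the conditioned-critical-random-walk machinery (Lemmas \ref{le_as} and \ref{le_critical}) developed for Proposition \ref{prop2}(ii).
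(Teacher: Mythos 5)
Your proof is correct and follows the same architecture as the paper's own argument: lower-bound the conditional probability by a specific genealogy, then apply the tilt $\nu=1$ together with Lemmas \ref{le_as} and \ref{le_critical} to show that $\mathbb{E}\big[\mathbb{P}_1(Z_m=1|\mathcal{E})^2\big]$ decays only at rate $\varrho$ (not $2\varrho$), which makes the quotient by $\mathbb{P}_1(Z_n=2)$ subexponential. The Jensen step you spell out is exactly the content of inequality (\ref{mrca_11}), which the paper reuses here with $\nu=1$, so that part of the two derivations is the same.

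The one place you genuinely diverge is the choice of genealogy, and yours is actually the more careful one. The paper bounds below by the event $\{Z_{n-x_n}=2\}$ followed by two size-one lineages. But if the two gen-$n$ individuals already sit in two distinct individuals at generation $n-x_n$, then $A_{x_n}^n$ \emph{fails}, so that event forces $MRCA_n > x_n$ rather than $MRCA_n=x_n$ (and for $x_n=n$ the factor $\mathbb{P}_1(Z_0=2)$ vanishes outright). Your scenario — singleton lineage of length $n-x_n$, binary split at generation $n-x_n+1$, then two singleton lineages of length $x_n-1$ — pins the common ancestor at generation $n-x_n$ and no later, giving $MRCA_n = x_n$ exactly, and replaces $\mathbb{P}_1(Z_{n-x_n}=2)$ by $\mathbb{P}_1(Z_{n-x_n}=1)\,\mathbb{E}[Q(2)]$, which decays at the same rate $\varrho$. (Small note: $\mathbb{E}[Q(2)]>0$ follows directly from the LF form with $b>0$ giving a geometric tail on $\{1,2,\ldots\}$; invoking $\mathbb{P}(Z_1=0)>0$ is not needed for that.) So the substance and the key technical ingredient coincide; your version merely tightens the bookkeeping in the genealogical step.
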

\begin{proof} 
Let $\mathbb{E}[Xe^{-X}]=0$ and note that $MRCA_{n}\geq 1$.  Conditionally on $\mathcal{E}$, the branching property holds and guarantees that
 as in the previous proof that for every $1\leq x_n\leq n$, 
\begin{align*}
 \mathbb{P}_1(MRCA_{n}=x_n |Z_{n}=2)&\geq \mathbb{P}_1(Z_{n-x_n} =2)\cdot
\frac{\mathbb{E}\big[\mathbb{P}_1(Z_{x_n}=1|\mathcal{E})^2 \big]}{\mathbb{P}_1(Z_{n}=2)} .
\end{align*}
As $\E[X\exp(-X)]\geq 0$, we know from the previous subsection that $\log\mathbb{E}[e^{-X}]=\lim_{n\rightarrow\infty} \mathbb{P}_1(Z_n=2)$. Thus we get for $1 \leq x_n \leq n$, 
\begin{align*}
\liminf_{n\rightarrow\infty}&\tfrac 1n \log \mathbb{P}_1(MRCA_{n}=x_n |Z_{n}=2)\nonumber\\
&\geq \liminf_{n\rightarrow\infty} \bigg\{ (1-x_n/n) \log\mathbb{E}[e^{-X}] +    \tfrac 1n \log\mathbb{E}\big[\mathbb{P}_1(Z_{x_n}=1|\mathcal{E})^2 \big] \bigg\}  - \log\mathbb{E}[e^{-X}].\label{inter1*}
\end{align*}
For the last term, we use the  change of measure of the  previous lemma with $\nu=1$, i.e.
\[\mathbf{P}(X \in dx)=\frac{e^{-x} \mathbb{P}(X\in dx)}{\mathbb{E}[e^{-X}]}. \]
Applying (\ref{mrca_11}), we get that
\begin{align*}
 \mathbb{E}\big[\mathbb{P}_1(Z_{x_n}=1|\mathcal{E})^2 \big]\geq d\ \mathbb{E}[e^{-X}]^{x_n} \mathbf{P}( L_{x_n}\geq 0, S_{x_n}<c) .
\end{align*}
As  $\mathbb{E}[Xe^{-X}]=0$, $S$ is a recurrent random walk under $\mathbf{P}$. Using again Lemma \ref{le_as}, $\mathbf{P}( L_n\geq 0, S_n<c)\sim d n^{-\tfrac 32}$,
and 
\begin{align*}
\liminf_{n\rightarrow\infty}&\tfrac 1n \log \mathbb{P}_1(MRCA_{n}=x_n  |Z_{n}=2)\nonumber\\
&\geq \log\mathbb{E}[e^{-X}] - \log\mathbb{E}[e^{-X}]+ \liminf_{n\rightarrow\infty} \bigg\{ -x_n/n \log\mathbb{E}[e^{-X}] +   x_n/n\log\mathbb{E}[e^{-X}]\bigg\} = 0.
%\label{inter1}
\end{align*}
It gives the expected lower bound,  whereas the upper bound follows is simply due to the fact that
the probabilities are less than $1$. 
\end{proof}
The following lemma is required to prove limit results for the MRCA in the intermediately supercritical case. To avoid technicalities, we impose an additional moment condition.
\begin{lemma}\label{helpint}
 Let $\mathbb{E}\big[Xe^{-X}\big]=0$ and $\gamma=\mathbb{E}[e^{-X}]$. Assume that $\mathbb{E}[f''(1)e^{-X}]<\infty$. Then
\[0<\liminf_{n\rightarrow\infty} \gamma^{-n} n^{1/2}\ \mathbb{P}_1(Z_{n}=2) \leq \limsup_{n\rightarrow\infty} \gamma^n n^{1/2}\ \mathbb{P}_1(Z_{n}=2) <\infty, \]
i.e. $\mathbb{P}_1(Z_{n}=2)$ is of the order $\gamma^nn^{-1/2}$.
\end{lemma}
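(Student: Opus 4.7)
The approach is to reduce the estimate to the behavior of the quenched survival probability under an associated critical random environment, via an exponential change of measure. Let $\mathbf{P}(X \in dx) := \gamma^{-1} e^{-x} \mathbb{P}(X \in dx)$; under $\mathbf{P}$ the walk $S$ is centered (from $\mathbb{E}[X e^{-X}] = 0$) with finite variance (from $\mathbb{E}[X^2 e^{-X}] < \infty$), so that $(Z_n)$ becomes a critical BPRE. The explicit linear fractional formula $\mathbb{P}_1(Z_n = k \mid \mathcal{E}) = A_n B_n^{k-1}/(A_n + B_n)^{k+1}$ for $k \geq 1$, with $A_n = e^{-S_n}$ and $B_n = \sum_{j=0}^{n-1} \eta_{j+1} e^{-S_j}$, yields the identity
\[
\mathbb{P}_1(Z_n = 2 \mid \mathcal{E}) = \mathbb{P}_1(Z_n = 1 \mid \mathcal{E}) - e^{-2 S_n}\mathbb{P}(Z_n > 0 \mid \mathcal{E})^3,
\]
and taking the annealed expectation and applying the change of measure one obtains
\[
\gamma^{-n} \mathbb{P}_1(Z_n = 2) = \mathbf{E}\bigl[\mathbb{P}(Z_n > 0 \mid \mathcal{E})^2\bigr] - \mathbf{E}\bigl[e^{-S_n}\mathbb{P}(Z_n > 0 \mid \mathcal{E})^3\bigr].
\]

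Next I would show that the second expectation is $O(n^{-3/2})$ and hence negligible. Using the standard supermartingale bound $\mathbb{P}(Z_n > 0 \mid \mathcal{E}) \leq e^{L_n \wedge 0}$ and splitting according to the sign of $L_n$, the contribution from $\{L_n \geq 0\}$ is at most $\mathbf{E}[e^{-S_n}; L_n \geq 0] = O(n^{-3/2})$ by Lemma~\ref{le_as1} with $\theta = 1$, while that of $\{L_n < 0\}$ is bounded by $\mathbf{E}[e^{-S_n + 3 L_n}] = O(n^{-3/2})$ by Corollary~\ref{corexp} with $\theta = 3$.

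The heart of the argument is then to show $\mathbf{E}[\mathbb{P}(Z_n > 0 \mid \mathcal{E})^2] \asymp n^{-1/2}$. The upper bound is easy: $\mathbb{P}(Z_n > 0 \mid \mathcal{E})^2 \leq \mathbb{P}(Z_n > 0 \mid \mathcal{E})$ combined with the AGKV asymptotic $\mathbf{P}(Z_n > 0) \lesssim n^{-1/2}$ for critical BPRE under Assumption~\ref{axi}. For the lower bound, I would restrict to the event $\{L_n \geq 0\}$, whose $\mathbf{P}$-probability is of order $n^{-1/2}$. Conditionally on this event the rescaled trajectory $(S_{\lfloor n t \rfloor}/\sqrt{n})_{t \in [0,1]}$ converges to a Brownian meander, and using the extra moment condition $\mathbb{E}[f''(1) e^{-X}] < \infty$ (which supplies the needed integrability of the $\eta_j$'s under $\mathbf{P}$), $B_n = \sum_{j} \eta_{j+1} e^{-S_j}$ stays tight in probability, since its main mass comes from indices $j$ where $S_j$ is only of order $\sqrt{j}$ and $e^{-S_j}$ is not yet exponentially small. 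Therefore $\mathbb{P}(Z_n > 0 \mid \mathcal{E}) = 1/(A_n + B_n)$ is bounded below by a positive constant with positive conditional probability, which gives $\mathbf{E}[\mathbb{P}(Z_n > 0 \mid \mathcal{E})^2; L_n \geq 0] \gtrsim n^{-1/2}$.

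I expect the main obstacle to be this last lower bound. Cauchy--Schwarz alone yields only $\mathbf{E}[\mathbb{P}(Z_n > 0 \mid \mathcal{E})^2] \geq \mathbf{P}(Z_n > 0)^2 = O(1/n)$, which is off by a factor $\sqrt{n}$. Bridging the gap requires the meander conditioning under $\mathbf{P}$ and the tightness of $B_n$ on $\{L_n \geq 0\}$; the latter is precisely the role played by the extra assumption $\mathbb{E}[f''(1) e^{-X}] < \infty$, which does not enter the rougher exponential rate statement of Corollary~\ref{lf1}.
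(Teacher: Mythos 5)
Your strategy---change measure to $\mathbf{P}(X\in dx)=\gamma^{-1}e^{-x}\mathbb{P}(X\in dx)$, making $S$ a centered walk of finite variance, and reduce the estimate to the order of $\mathbf{E}\bigl[\mathbb{P}(Z_n>0\mid\mathcal E)^2\bigr]$---is the same as the paper's, but you repair two spots where the paper's written argument is defective. First, the paper opens its proof with $\mathbb{P}_1(Z_n=2)=\mathbb{E}\bigl[e^{-S_n}\mathbb{P}_1(Z_n>0\mid\mathcal E)^2\bigr]$, yet by (\ref{eq_p1}) the right-hand side equals $\mathbb{P}_1(Z_n=1)$, not $\mathbb{P}_1(Z_n=2)$. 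You instead use the exact linear fractional decomposition $\mathbb{P}_1(Z_n=2\mid\mathcal E)=\mathbb{P}_1(Z_n=1\mid\mathcal E)-e^{-2S_n}\mathbb{P}(Z_n>0\mid\mathcal E)^3$ and show, via Lemma~\ref{le_as1} and Corollary~\ref{corexp}, that the correction contributes only $O(\gamma^n n^{-3/2})$; that is the right move. Second, and more importantly, the paper's lower bound rests on $\mathbf{E}\bigl[\mathbb{P}(Z_n>0\mid\mathcal E)^2\bigr]\geq\mathbf{P}(Z_n>0)$, which is backwards: since $\mathbb{P}(Z_n>0\mid\mathcal E)\leq 1$ the correct direction is $\leq$, and Jensen only gives $\mathbf{P}(Z_n>0)^2\asymp n^{-1}$---precisely the obstacle you flag. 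Your remedy is correct: restrict to $\{L_n\geq 0\}$, whose $\mathbf{P}$-probability is of order $n^{-1/2}$; there $e^{-S_n}\leq 1$, and $\sum_{j}\eta_{j+1}e^{-S_j}$ given $\{L_n\geq 0\}$ is tight since it converges $\mathbf{P}^+$-a.s.\ to a finite $U_\infty$ (Lemma~3.1 of \cite{abkv10}, with the integrability of $\eta$ under $\mathbf{P}$ supplied by $\mathbb{E}[f''(1)e^{-X}]<\infty$) and the conditional convergence recalled after Lemma~\ref{le_as} applies. Thus $\mathbb{P}(Z_n>0\mid\mathcal E)=1/(e^{-S_n}+\sum_j\eta_{j+1}e^{-S_j})$ is bounded below by a positive constant with conditional $\mathbf{P}(\cdot\mid L_n\geq 0)$-probability bounded away from zero, giving $\mathbf{E}\bigl[\mathbb{P}(Z_n>0\mid\mathcal E)^2;L_n\geq 0\bigr]\gtrsim n^{-1/2}$. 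One caution: this genuinely needs conditioning on $\{L_n\geq 0\}$ alone---Lemma~\ref{le_critical}, with the extra constraint $S_n\leq c$, would only deliver $n^{-3/2}$---so the tightness step you sketch must actually be carried out rather than cited. With that detail filled in, your proof is correct and in fact fixes, rather than merely reproduces, the paper's own argument.
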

\begin{proof}
Using the change of measure to $\mathbf{P}$ and (\ref{eq_p1}),
\begin{align}
\mathbb{P}_1(Z_n=2) &=\mathbb{E}\big[e^{-S_n} \mathbb{P}_1(Z_n>0|\mathcal{E})^2\big]  \nonumber \\
&= \gamma^n \mathbf{E}[\mathbb{P}(Z_n>0|\mathcal{E})^2] \geq \gamma^n \mathbf{P}(Z_n>0). \label{help23}
\end{align}
Under $\mathbf{P}$, $\mathbf{E}[X]=0$ and $Z$ is a critical branching process in random environment. Under our assumptions, 
there is a constant $d>0$ such that (see  \cite{kersting052}[Theorem 1.1]) 
\begin{align}\label{help24}
\mathbf{P}(Z_n>0)\sim d \ n^{-\tfrac 12} .
\end{align} 
Following the proof of the  upper bound, using  (\ref{help23}) and that $\mathbb{P}(Z_n>0)\leq e^{L_n}$ a.s., we have
\begin{align*}
 \mathbb{P}_1(Z_n=2) &= \gamma^n \mathbf{E}[\mathbb{P}(Z_n>0|\mathcal{E})^2]\leq \gamma^n \mathbf{E}[e^{2L_n}].
\end{align*}
Our assumptions imply $\mathbf{E}[X^2]<\infty$ and thus, as a direct consequence of \cite{kersting053}[Lemma 2.1 and $\int_{-\infty}^0 e^{x}u(-x)dx<\infty$],
\[\mathbf{E}[e^{2L_n}] = O(n^{-1/2}).\]
This proves the upper bound. 
\end{proof}
The next lemma describes the probability of $\{MRCA_n=n\}$ in the intermediate regime.
\begin{lemma}\label{mrcaint2}
Let $\mathbb{E}\big[Xe^{-X}\big]=0$ and assume that $\mathbb{E}\big[f''(1)\big]<\infty$. Then 
\[0<\liminf_{n\rightarrow\infty} n\ \mathbb{P}_1(MRCA_{n}=n|Z_{n}=2) \leq \limsup_{n\rightarrow\infty} n\ \mathbb{P}_1(MRCA_{n}=n|Z_{n}=2)  <\infty , \]
i.e. $\mathbb{P}_1(MRCA_{n}=n|Z_{n}=2)$ is of the order $n^{-1}$.
\end{lemma}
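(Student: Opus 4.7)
My plan is to decompose according to the first generation and exploit the linear fractional identities (\ref{genf1})--(\ref{genf2}). Conditionally on $\mathcal{E}$, the event $\{MRCA_n=n,Z_n=2\}$ requires $Z_1=k\geq 2$ with exactly two of the $k$ children's subtrees surviving, each producing exactly one descendant in generation $n$. Summing over $k$ yields
\[
\mathbb{P}_1(MRCA_n=n,Z_n=2\mid\mathcal{E}) = \sum_{k\geq 2}Q_1(k)\binom{k}{2}[f_{1,n}'(0)]^2 f_{1,n}(0)^{k-2} = \tfrac{1}{2}\,f_1''(f_{1,n}(0))\,[f_{1,n}'(0)]^2 .
\]
Since $\mathbb{P}_1(Z_n=2) \asymp \gamma^n n^{-1/2}$ by Lemma \ref{helpint}, with $\gamma=\mathbb{E}[e^{-X}]$, the task reduces to proving $\mathbb{P}_1(MRCA_n=n,Z_n=2)\asymp \gamma^n n^{-3/2}$.

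For the upper bound, the monotonicity $f_1''(s)\leq f_1''(1)=b_1$ and the independence of $Q_1$ from the shifted environment $\mathcal{E}'=(Q_2,\ldots,Q_n)$ give
\[
\mathbb{P}_1(MRCA_n=n,Z_n=2) \leq \tfrac{\mathbb{E}[b_1]}{2}\cdot\mathbb{E}\!\left[(f_{1,n}'(0))^2\right].
\]
The identity (\ref{genf20}) specialized at $s=0$ reads $f_{1,n}'(0) = e^{-(S_n-S_1)}\mathbb{P}_1(Z_{n-1}>0\mid\mathcal{E}')^2$, so by shift-invariance together with $\mathbb{P}_1(Z_k>0\mid\mathcal{E})\leq e^{L_k}$,
\[
\mathbb{E}[(f_{1,n}'(0))^2] = \mathbb{E}[e^{-2S_{n-1}}\mathbb{P}_1(Z_{n-1}>0\mid\mathcal{E})^4] \leq \mathbb{E}[e^{-2S_{n-1}+4L_{n-1}}] .
\]
Passing to the probability $\mathbf{P}$ defined by $\mathbf{P}(X\in dx)=\gamma^{-1}e^{-x}\mathbb{P}(X\in dx)$ (under which $\mathbf{E}[X]=0$ and $\mathbf{Var}(X)<\infty$ by $\mathbb{E}[X^2e^{-X}]<\infty$) gives $\gamma^{n-1}\mathbf{E}[e^{-S_{n-1}+4L_{n-1}}]$, which is $O(\gamma^n n^{-3/2})$ by Corollary \ref{corexp} applied with $\theta=4>1$.

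For the lower bound, I drop to the configuration $Z_1=2$ with each child line producing a single descendant:
\[
\mathbb{P}_1(MRCA_n=n,Z_n=2) \geq \mathbb{E}[Q(2)]\cdot\mathbb{E}[\mathbb{P}_1(Z_{n-1}=1\mid\mathcal{E})^2] .
\]
Here $\mathbb{E}[Q(2)]>0$: in the LF setting $Q(2)>0$ whenever $b>0$, and $b\equiv 0$ a.s.\ would make $Z$ a Bernoulli process inconsistent with $\mathbb{P}_1(Z_n=2)>0$. The same LF identity (now at $s=0$ for the unshifted walk) yields $\mathbb{P}_1(Z_{n-1}=1\mid\mathcal{E})^2=e^{-2S_{n-1}}\mathbb{P}_1(Z_{n-1}>0\mid\mathcal{E})^4$. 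After the change of measure this equals $\gamma^{n-1}\mathbf{E}[e^{-S_{n-1}}\mathbb{P}_1(Z_{n-1}>0\mid\mathcal{E})^4]$, which I restrict to the critical tube $\{L_{n-1}\geq 0,\,S_{n-1}\leq c\}$: on this event $e^{-S_{n-1}}\geq e^{-c}$; Lemma \ref{le_critical} keeps $\mathbf{P}(Z_{n-1}>0\mid L_{n-1}\geq 0,S_{n-1}\leq c)$ bounded away from zero; and Lemma \ref{le_as} gives $\mathbf{P}(L_{n-1}\geq 0,S_{n-1}\leq c)\sim d\,n^{-3/2}$. This delivers the matching lower bound $c'\gamma^n n^{-3/2}$.

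The main obstacle is the lower bound: one must guarantee that the quenched survival probability does not decay along the conditioned tube $\{L_{n-1}\geq 0,\,S_{n-1}\leq c\}$, which is exactly where Lemma \ref{le_critical} (and hence Assumption \ref{axi}, or alternatively $\mathbb{P}(X\geq 0)=1$) is essential. Everything else amounts to the same exponential change of measure already used in Lemmas \ref{le1}--\ref{helpint}, combined with the sharp random-walk asymptotics from Section \ref{section6}.
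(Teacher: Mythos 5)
Your proof is correct and follows essentially the same route as the paper: the same first-generation decomposition leading to $\tfrac12 f_1''(f_{1,n}(0))[f_{1,n}'(0)]^2$, the same LF identity rewriting $f_{1,n}'(0)$ via the quenched survival probability, the same change of measure to $\mathbf{P}$, and the same appeals to Corollary \ref{corexp} (upper bound) and to Lemmas \ref{le_critical} and \ref{le_as} (lower bound), finishing with Lemma \ref{helpint}. The only cosmetic additions are writing out the exact generating-function identity and the brief remark that $\mathbb{E}[Q(2)]>0$ is forced in the LF setting once $\mathbb{P}_1(Z_n=2)>0$, which the paper leaves implicit.
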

\begin{proof}
First, the event $\{MRCA_n= n\}$ implies that there are at least two individuals in generation $1$ and that 
from this generation on, at least two subtrees survive until generation $n$. We use the branching property and a decomposition according to the two subtrees which survive and get that
\begin{align}
\mathbb{P}&_1(MRCA_n= n, Z_n=2|\mathcal{E})\nonumber\\
&=\sum_{k=2}^{\infty} {k\choose 2} 
\mathbb{P}_1(Z_1=k|\mathcal{E}) \mathbb{P}(Z_{n}=1|\mathcal{E}, Z_1=1)^2\mathbb{P}(Z_{n}=0|\mathcal{E}, Z_1=1)^{k-2} \nonumber\\
&\leq \sum_{k=2}^{\infty} \frac{k(k-1)}{2} \mathbb{P}_1(Z_1=k|\mathcal{E})
 \mathbb{P}(Z_{n}=1|\mathcal{E}, Z_1=1)^2\nonumber\\
&\leq  \mathbb{P}(Z_{n}=1|\mathcal{E}, Z_1=1)^2 f''_1(1)\ \text{a.s.}, \label{eqint1}
\end{align}
since $f_1''(1)=\sum_{k=2}^\infty k (k-1) \mathbb{P}_1(Z_1=k|\mathcal{E})$. 
Using (\ref{eq_p1}) and independence yields 
\begin{align}
\mathbb{P}&_1(MRCA_n= n, Z_n=2)\nonumber\\
&\leq  \mathbb{E}[f''(1)]\mathbb{E}[ e^{-2S_{n-1}} \mathbb{P}(Z_{n-1}>0|\mathcal{E})^4]\leq  \mathbb{E}[f''(1)]\mathbb{E}[e^{-2S_{n-1}+4L_{n-1}}].\label{eqint2}
\end{align}
Again, we change to the measure $\mathbf{P}$. Note that the assumptions of the lemma  $\mathbb{E}[f''(1)]<\infty$ ensures that  $\mathbf{Var}(X)<\infty$. Using also Corollary \ref{corexp}, s we get
\begin{align*}
\mathbb{P}&_1(MRCA_n= n, Z_n=2)\leq\mathbb{E}[f''(1)] \gamma^{n-1}\mathbf{E}[e^{-S_{n-1}+4L_{n-1}}]=O\big(\gamma^{n} n^{-3/2}\big).
\end{align*}
Inserting this and applying Lemma \ref{helpint} yields
\begin{align*}
\limsup_{n\rightarrow\infty} n\ \mathbb{P}&_1(MRCA_n= n| Z_n=2)<\infty.
\end{align*}
For the lower bound, we use similar arguments. First, 
\begin{align}
\mathbb{P}&_1(MRCA_n= n, Z_n=2|\mathcal{E})\geq \mathbb{P}_1(Z_1=2|\mathcal{E}) \mathbb{P}_1(Z_{n-1}=1|\mathcal{E}, Z_1=1)^2\quad \text{a.s.}\nonumber
\end{align}
Let $c>0$. Taking the expectation and  using (\ref{eq_p1})  yields
\begin{align}
&\mathbb{P}_1(MRCA_n= n, Z_n=2) \nonumber\\
&\ \geq \mathbb{P}_1(Z_1=2) \mathbb{E}[e^{-2S_{n-1}}\mathbb{P}_1(Z_{n-1}>0|\mathcal{E})^4]\nonumber\\
& \ \geq \mathbb{P}_1(Z_1=2)\gamma^{n-1}e^{-c}\mathbf{E}\big[\mathbb{P}_1(Z_{n-1}>0\big|\mathcal{E})^4|L_{n-1}\geq 0, S_{n-1}\leq c\big]\mathbf{P}(L_{n-1}\geq 0, S_{n-1}\leq c). \nonumber
\end{align}
Moreover, by Lemma \ref{le_critical} and Jensen's inequality, 
\[\liminf_{n\rightarrow\infty} \mathbf{E}[\mathbb{P}_1(Z_{n}>0|\mathcal{E})^4|L_n\geq 0, S_n\leq c]>0.\] 
Applying Lemma \ref{helpint} again, we get that
\[\liminf_{n\rightarrow\infty} n\ \mathbb{P}_1(MRCA_n= n| Z_n=2)>0.\]
\end{proof}

The next Lemma describes the probability that the MRCA is neither at the beginning nor at the end:
\begin{lemma}\label{mrcaint3}
Let $\mathbb{E}\big[Xe^{-X}\big]=0$ and $\mathbb{E}[f''(1)/(1-f(0))^2]<\infty$. Then for every $\delta \in (0,1)$ 
\[0<\liminf_{n\rightarrow\infty} n^{3/2}\ \mathbb{P}_1(MRCA_{n}=\lceil \delta n\rceil |Z_{n}=2) \leq \limsup_{n\rightarrow\infty} n^{3/2}\ \mathbb{P}_1(MRCA_{n}=\lceil \delta n\rceil|Z_{n}=2)  <\infty, \]
i.e. $\mathbb{P}_1(MRCA_{n}=\lceil \delta n\rceil |Z_{n}=2)$ is of the order $n^{-3/2}$.
\end{lemma}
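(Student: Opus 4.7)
Set $n':=n-\lceil\delta n\rceil$ and $m:=\lceil\delta n\rceil$, both of order $n$. The event $\{MRCA_n=m, Z_n=2\}$ requires that exactly one individual at generation $n'$ has surviving descendants at time $n$, and those two descendants come from two distinct children at generation $n'+1$. By the branching property combined with the linear fractional identities from the beginning of Section \ref{linearfrac} (notably $f_{0,n'}'(s)=e^{-S_{n'}}(1-f_{0,n'}(s))^2/(1-s)^2$ and the telescoping behind (\ref{utile})), a computation analogous to the one at the start of the proof of Lemma \ref{mrcaint2} yields the exact formula
\[
\mathbb{P}_1(MRCA_n=m, Z_n=2 \mid \mathcal{E}) = e^{-S_n}\cdot \eta_{n'+1}\, e^{-(S_n-S_{n'+1})}\, \theta^2\, \tilde\theta_{n'}\, \tilde\theta_{n'+1},
\]
where $\theta=\mathbb{P}_1(Z_n>0\mid\mathcal E)$ and $\tilde\theta_j=\mathbb{P}(Z_n>0\mid Z_j=1,\mathcal E)$. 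Changing to the measure $\mathbf{P}$ and invoking Lemma \ref{helpint}, the lemma reduces to showing $\mathbf{E}[\eta_{n'+1}\,e^{-(S_n-S_{n'+1})}\,\theta^2\,\tilde\theta_{n'}\,\tilde\theta_{n'+1}] \asymp n^{-2}$.

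For the upper bound, the LF identity $\tilde\theta_{n'}^{-1} = \eta_{n'+1} + e^{-X_{n'+1}}/\tilde\theta_{n'+1}$ gives the clean pointwise bound $\eta_{n'+1}\tilde\theta_{n'}\leq 1$. Combined with $\theta\leq\mathbb{P}_1(Z_{n'+1}>0\mid\mathcal E)\leq e^{L^L_{n'+1}}$ (writing $L^L_{n'+1}=\min_{0\leq i\leq n'+1}S_i$) and $\tilde\theta_{n'+1}\leq e^{L^R_{m-1}}$ (with $L^R_{m-1}=\min_{0\leq i\leq m-1}(S_{n'+1+i}-S_{n'+1})$), the target expectation is bounded above by $\mathbf{E}[e^{2L^L_{n'+1}}\cdot e^{-S^R_{m-1}+L^R_{m-1}}]$, where $S^R_{m-1}=S_n-S_{n'+1}$. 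The two factors are independent under $\mathbf{P}$ as they depend on disjoint blocks of increments. Corollary \ref{corexp} gives $\mathbf{E}[e^{-S^R_{m-1}+L^R_{m-1}}]=O(m^{-3/2})$, and a standard exponential-moment estimate for the minimum of a centered random walk with finite variance (via the duality $L_n\stackrel{d}{=}S_n-M_n$ combined with Lemma \ref{le_as1}) gives $\mathbf{E}[e^{2L^L_{n'+1}}]=O((n')^{-1/2})$; the product is $O(n^{-2})$ as required.

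For the lower bound, I would restrict to the favorable event
\[
E_n := \{L^L_{n'+1}\geq 0\} \cap \{L^R_{m-1}\geq 0,\ S^R_{m-1}\leq c\}
\]
with $c>0$ chosen large enough for Lemma \ref{le_critical} to apply. Independence of the left and right blocks together with the ballot-type estimate $\mathbf{P}(L^L_{n'+1}\geq 0)\asymp (n')^{-1/2}$ and Lemma \ref{le_as} yield $\mathbf{P}(E_n)\asymp n^{-2}$. On $E_n$ the full walk satisfies $L_n\geq 0$ and $S_n\in[0,c]$, so $e^{-(S_n-S_{n'+1})}\geq e^{-c}$; and Lemma \ref{le_critical} applied both to the full process and to the right subprocess (started from one ancestor at $n'+1$) gives that $\theta$, $\tilde\theta_{n'}$ and $\tilde\theta_{n'+1}$ are each bounded below by positive constants on a positive-probability subset of $E_n$, after further restricting to a set $\{X_{n'+1}\in[-K,K],\, \eta_{n'+1}\geq\varepsilon\}$ of positive probability in order to control $\tilde\theta_{n'}$ via the LF identity above. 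Integrating then yields the required lower bound of order $n^{-2}$. The main obstacle throughout is the full survival probability $\theta$, which couples the left and right halves of the environment: the crude bound $\theta\leq e^{L^L_{n'+1}}$ suffices for the upper bound, while for the lower bound one relies essentially on Lemma \ref{le_critical} to keep $\theta$ of order one on the excursion-like event.
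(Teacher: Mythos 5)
Your exact quenched identity
\[
\mathbb{P}_1(MRCA_n=m, Z_n=2\mid\mathcal{E})
= e^{-S_n}\,\eta_{n'+1}\,e^{-(S_n-S_{n'+1})}\,\theta^2\,\tilde\theta_{n'}\,\tilde\theta_{n'+1}
\]
is correct (one can check it by computing $f'_{0,n'}(f_{n',n}(0))=e^{-S_{n'}}\theta^2/\tilde\theta_{n'}^2$ and $f''_{n'+1}(s)=2\eta_{n'+1}e^{-X_{n'+1}}\tilde\theta_{n'}^3/\tilde\theta_{n'+1}^3$, and is a nice observation that the paper does not use: the paper keeps the inequality $f''_{n'+1}(s)\le f''_{n'+1}(1)$ and pushes the moment condition through the ratio $\tilde\theta_{n'+1}/\tilde\theta_{n'}\le (1-f_{n'+1}(0))^{-1}$.

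However, your upper bound contains a real error. After discarding the middle factor with $\eta_{n'+1}\tilde\theta_{n'}\le 1$, you are left with $\mathbf{E}\bigl[e^{2L^L_{n'+1}}\cdot e^{-S^R_{m-1}+L^R_{m-1}}\bigr]$. But Corollary~\ref{corexp} is proved only for $\theta>1$, and for $\theta=1$ one does \emph{not} get $O(m^{-3/2})$: a decomposition at the minimum gives $\mathbf{E}[e^{-S_m+L_m}]=\sum_k\mathbf{P}(\tau_m=k)\,\mathbf{E}[e^{-S_{m-k}}\mid L_{m-k}\ge 0]\asymp\sum_k k^{-1/2}(m-k)^{-3/2}\asymp m^{-1/2}$. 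So your upper bound yields only $O(n^{-1})$ for the target expectation, not $O(n^{-2})$. The crude step $\eta_{n'+1}\tilde\theta_{n'}\le 1$ throws away exactly the factor that the statement needs; indeed note that your argument never uses the hypothesis $\mathbb{E}[f''(1)/(1-f(0))^2]<\infty$, which is a warning sign. A correct way to finish from your exact identity is to use instead $\tilde\theta_{n'}\le e^{X_{n'+1}}\tilde\theta_{n'+1}$ (from $\tilde\theta_{n'}^{-1}\ge e^{-X_{n'+1}}/\tilde\theta_{n'+1}$), so that the right block carries $\tilde\theta_{n'+1}^2\le e^{2L^R_{m-1}}$ (hence $\mathbf{E}[e^{-S^R_{m-1}+2L^R_{m-1}}]=O(m^{-3/2})$ by Corollary~\ref{corexp} with $\theta=2$), and the leftover single-step factor $\eta_{n'+1}e^{X_{n'+1}}=\tfrac12 f''_{n'+1}(1)e^{-X_{n'+1}}$ has finite $\mathbf{E}$-expectation precisely because $f''(1)e^{-2X}\le f''(1)/(1-f(0))^2$; the three blocks are independent after replacing $L^L_{n'+1}$ by $L^L_{n'}$.

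Your lower bound is broadly in the right direction, but the citation of Lemma~\ref{le_critical} for the full survival probability $\theta$ is not quite right: that lemma controls $\mathbf{P}(Z_n>0\mid L_n\ge 0, S_n\le c)$, whereas on your $E_n$ you constrain $L_n\ge 0$ but $S_n\approx S_{n'+1}$ is typically of order $\sqrt n$, not $\le c$. (Intersecting with $\{S_{n'+1}\le c\}$ would drop $\mathbf{P}(E_n)$ to $n^{-3}$.) What is actually needed is that $\theta^{-1}\le 1+\sum_{k<n}\eta_{k+1}e^{-S_k}$ stays bounded with positive conditional probability given $\{L^L_{n'+1}\ge 0\}$, which follows from the $\mathbf{P}^+$-a.s.\ finiteness of $\sum_k\eta_{k+1}e^{-S_k}$, not from Lemma~\ref{le_critical} as stated. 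The paper sidesteps all of this by bounding below with the event $\{Z_{n'}=1\}$ (contributing $\mathbb{P}_1(Z_{n'}=1)\asymp\gamma^{n'}(n')^{-1/2}$) and invoking Lemma~\ref{mrcaint2} for the remaining $m$ generations, which is cleaner.
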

\begin{proof}%
First, the event $\{MRCA_n=\lceil \delta n\rceil, Z_n=2\}$ implies that the two individuals in generation $n$ 
stem from one individual in generation $n-\lceil \delta n\rceil=\lfloor (1-\delta) n\rfloor$. If there are $k$ 
individuals in this generation, there are $k$ possibilities for the 
ancestor from which the two surviving individuals in generation $n$ stem from. All others have to become extinct. 
We use the branching property and a decomposition according to the two subtrees which survive to get that a.s.
%\marginpar{Problem  with $\mathbb{P}(MRCA_{\lceil\delta n\rceil}=\lceil\delta n\rceil$ 
%since actually you would need to start from time $O$ and work with translated environment to keep this notation, 
%I believe you should write $MRCA_{n}=\lceil\delta n\rceil$. And it's Ok ? 
%You still can use the previous result thanks to the conditionning $Z_{\lfloor (1-\delta) n\rfloor}=1$}
\begin{align}
 \mathbb{P}&(MRCA_n=\lceil\delta n\rceil, Z_n=2|\mathcal{E}) \nonumber \\
&= \sum_{k=1}^\infty \mathbb{P}(Z_{\lfloor (1-\delta) n\rfloor}=k|\mathcal{E}) k \mathbb{P}(Z_n=0|\mathcal{E},Z_{\lfloor (1-\delta) n\rfloor}=k-1)\nonumber \\
&\quad \qquad \mathbb{P}(MRCA_{n}=\lceil\delta n\rceil, Z_n=2|\mathcal{E}, Z_{\lfloor (1-\delta) n\rfloor}=1)  \nonumber \\
&=  \sum_{k=1}^\infty \mathbb{P}(Z_{\lfloor (1-\delta) n\rfloor}=k|\mathcal{E}) k \mathbb{P}(Z_n=0|\mathcal{E},Z_{\lfloor (1-\delta) n\rfloor}=1)^{k-1} \nonumber \\
&\quad \qquad \mathbb{P}(MRCA_{n}=\lceil\delta n\rceil, Z_n=2|\mathcal{E}, Z_{\lfloor (1-\delta) n\rfloor}=1) \nonumber
\end{align}
Next, we set
\[s:=(s(\mathcal{E})=\mathbb{P}(Z_{n}=0|\mathcal{E}, Z_{\lfloor (1-\delta) n\rfloor+1}=1)\]
and note that
\[\mathbb{P}(Z_n=0|\mathcal{E},Z_{\lfloor (1-\delta) n\rfloor}=1)= f_{\lfloor (1-\delta) n\rfloor}(s) . \]
Thus we get that a.s.
\begin{align}
\mathbb{P}&(MRCA_n=\lceil\delta n\rceil, Z_n=2|\mathcal{E}) \nonumber \\
 &=  f'_{0,\lfloor (1-\delta) n\rfloor}(f_{\lfloor (1-\delta) n\rfloor}(s)) 
\mathbb{P}_1(MRCA_{n}=\lceil\delta n\rceil, Z_n=2|\mathcal{E}, Z_{\lfloor (1-\delta) n\rfloor}=1) \label{int2402}
\end{align}
Next, using (\ref{eqint1}) and (\ref{eq_p1}), we get that
\begin{align}
& \mathbb{P}_1(MRCA_{ n}=\lceil\delta n\rceil, Z_n=2|\mathcal{E}, Z_{\lfloor (1-\delta) n\rfloor}=1)   \\
&\qquad  \leq f''_{\lfloor (1-\delta) n\rfloor+1}(1)  
\mathbb{P}(Z_{n}=1|\mathcal{E}, Z_{\lfloor (1-\delta) n\rfloor+1}=1)^2 \nonumber\\
&\qquad = f''_{\lfloor (1-\delta) n\rfloor+1}(1)  e^{-2(S_{n}-S_{\lfloor (1-\delta) n\rfloor+1})} \mathbb{P}(Z_{n}>0|\mathcal{E}, Z_{\lfloor (1-\delta) n\rfloor+1}=1)^4\nonumber\\
&\qquad = f''_{\lfloor (1-\delta) n\rfloor+1}(1)  e^{-2(S_{n}-S_{\lfloor (1-\delta) n\rfloor+1})} (1-s)^4.  \label{int2403}
\end{align}
Combining  (\ref{int2402}) and (\ref{int2403}), we have a.s.
\begin{align*}
\mathbb{P}&(MRCA_n=\lceil\delta n\rceil, Z_n=2|\mathcal{E})=  f'_{0,\lfloor (1-\delta) n\rfloor}(f_{\lfloor (1-\delta) n\rfloor}(s)) 
f''_{\lfloor (1-\delta) n\rfloor+1}(1)  e^{-2(S_{n}-S_{\lfloor (1-\delta) n\rfloor+1})} (1-s)^4.
\end{align*}
Moreover, from (\ref{genf30}), 
\begin{align*}
f'_{0,\lfloor (1-\delta) n\rfloor}&(f_{\lfloor (1-\delta) n\rfloor}(s)) \big(1- f_{\lfloor (1-\delta) n\rfloor}(s)\big)^2 
\leq e^{-S_{\lfloor (1-\delta) n\rfloor}} \mathbb{P}(Z_{\lfloor (1-\delta) n\rfloor}>0|\mathcal{E})^2 \ \text{a.s.}
\end{align*}
So
\begin{align*}
\mathbb{P}&(MRCA_n=\lceil\delta n\rceil, Z_n=2|\mathcal{E})  \nonumber \\ 
&= e^{-S_{\lfloor (1-\delta) n\rfloor}} \mathbb{P}(Z_{\lfloor (1-\delta) n\rfloor}>0|\mathcal{E})^2 
 f''_{\lfloor (1-\delta) n\rfloor+1}(1)  e^{-2(S_{n}-S_{\lfloor (1-\delta) n\rfloor+1})} \frac{(1-s)^4}{(1- f_{\lfloor (1-\delta) n\rfloor}(s))^2}.
\end{align*}
%Thus we have to bound $\frac{1-s}{1- f_{\lfloor (1-\delta) n\rfloor}(s)}$.
As already used in \cite{BK09}[Proof of Lemma 1], we have  for a generating function $f$ of a random variable $R$
\[\frac{1-f(s)}{1-s} = \sum_{k=0}^\infty s^k \mathbb{P}(R>k),\]
which is obviously increasing in $s$. Thus we get 
\begin{align*}
\frac{1-s}{1- f_{\lfloor (1-\delta) n\rfloor}(s)}\leq \big(\mathbb{P}(Z_{\lfloor (1-\delta) n\rfloor}>0|Z_{\lfloor (1-\delta) n\rfloor-1}=1,\mathcal{E})\big)^{-1}
= \frac{1}{1-f_{\lfloor (1-\delta) n\rfloor}(0)}.
\end{align*}
Combining these identities and using the independence of the environments yields :
%\marginpar{here I have corrected f''(0) in f''(1). You can use $(1-s)\leq 1$ but I believe you need a square :  $(1- f_{\lfloor (1-\delta) n\rfloor}(s))^2$ gives 
%a control with $\mathbb{E}\big[ f''(1)/(1-f(0)^2) \big]$ no ?}
\begin{align*}
\mathbb{P}&(MRCA_n=\lceil\delta n\rceil, Z_n=2)\\
&\leq \mathbb{E}\big[ e^{-S_{\lfloor (1-\delta) n\rfloor}} \mathbb{P}(Z_{\lfloor (1-\delta) n\rfloor}>0|\mathcal{E})^2\big] \mathbb{E}\big[ f''(1)/(1-f(0))^2 \big]
 \mathbb{E}\big[e^{-2S_{\lceil \delta n\rceil-1}}\mathbb{P}_1(Z_{\lceil \delta n\rceil-1}>0|\mathcal{E})^2\big],
\end{align*}
where we recall that  by  assumption $\mathbb{E}\big[ f''(1)/(1-f(0))^2 \big]<\infty$.
 As we have proved before, for every $\delta\in (0,1)$,
\[\mathbb{E}\big[ e^{-S_{\lfloor (1-\delta) n\rfloor}} \mathbb{P}(Z_{\lfloor (1-\delta) n\rfloor}>0|\mathcal{E})^2\big] \leq  \gamma^{\lfloor (1-\delta) n\rfloor}\mathbf{E}\big[
 e^{2L_{\lfloor (1-\delta) n\rfloor}}\big] = \gamma^{\lfloor (1-\delta) n\rfloor} O(n^{-1/2}) \]
and 
\begin{align*}
\mathbb{E}\big[e^{-2S_{\lceil \delta n\rceil-1}}\mathbb{P}_1(Z_{\lceil \delta n\rceil-1}>0|\mathcal{E})^2\big]&\leq \gamma^{\lceil \delta n\rceil-1} 
\mathbf{E}\big[e^{-S_{\lceil \delta n\rceil-1}+2L_{\lceil \delta n\rceil-1}}\big] =\gamma^{\lceil \delta n\rceil-1} O(n^{-3/2}).
\end{align*}
Together with Lemma \ref{helpint}, this yields the expected upper bound:
$$\limsup_{n\rightarrow\infty} n^{3/2}\ \mathbb{P}_1(MRCA_{n}=\lceil \delta n\rceil|Z_{n}=2)  <\infty.$$

For the lower bound, we use 
\begin{align*}
\mathbb{P}&_1(MRCA_n= \lfloor \delta n\rfloor , Z_n=2|\mathcal{E})\\
&\geq \mathbb{P}_1(Z_{\lfloor (1-\delta)n\rfloor} =1|\mathcal{E}) \mathbb{P}_1(MRCA_{n}=\lceil \delta n\rceil, Z_n=2|\mathcal{E}, Z_{\lfloor (1-\delta)n\rfloor}=1)\ \text{a.s.}\nonumber
\end{align*}
Both terms are independent and from Lemma \ref{mrcaint2}, we get
\[\liminf_{n\rightarrow\infty}\gamma^{-\lfloor \delta n\rfloor+1} n^{3/2}  \mathbb{P}_1(MRCA_{n}=\lceil \delta n\rceil, Z_n=2| Z_{\lfloor (1-\delta)n\rfloor}=1)>0 . \]
From the previous lemmas, 
\[\liminf_{n\rightarrow\infty}\gamma^{-\lfloor (1-\delta)n\rfloor} n^{1/2} \mathbb{P}_1(Z_{\lfloor (1-\delta)n\rfloor} =1)>0  . \]
Thanks to Lemma \ref{helpint}, we obtain the expected lower bound. 
\end{proof}
For the next proof, we require the following auxiliary result.
\begin{lemma}\label{le_suph}
We assume that $\mathbb{E}[Xe^{-X}]>0$. Then for every $c>0$, 
\begin{align*}
 \lim_{n\rightarrow\infty} \tfrac 1n \log \inf_{z\geq cn^2} \mathbb{P}_{z}(Z_n=2)& =- \infty, & \qquad
\limsup_{n\rightarrow\infty} \tfrac 1n \log \mathbb{P}_1(1\leq Z_n\leq cn^2) =\log\mathbb{E}[e^{-X}]. 
\end{align*}
\end{lemma}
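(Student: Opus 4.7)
The plan is to handle the two claims separately, exploiting the explicit linear fractional formulas of Section \ref{linearfrac} together with the strongly supercritical hypothesis $\mathbb{E}[Xe^{-X}]>0$, which forces $\mathbb{E}[e^{-X}]<1$ (convexity of $s\mapsto \mathbb{E}[e^{-sX}]$ combined with its nonpositive derivative at $s=1$).

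\emph{Second claim.} In the LF case, conditionally on $\mathcal{E}$ and $\{Z_n>0\}$, $Z_n$ started from a single ancestor is geometrically distributed, so $k\mapsto \mathbb{P}_1(Z_n=k\mid\mathcal{E})$ is nonincreasing on $k\geq 1$. Hence $\mathbb{P}_1(1\leq Z_n\leq cn^2)\leq cn^2\,\mathbb{P}_1(Z_n=1)$, and the identity (\ref{utile}) gives $\mathbb{P}_1(Z_n=1)=\mathbb{E}[e^{-S_n}\mathbb{P}_1(Z_n>0\mid\mathcal{E})^2]\leq \mathbb{E}[e^{-X}]^n$, yielding $\limsup\tfrac{1}{n}\log\mathbb{P}_1(1\leq Z_n\leq cn^2)\leq \log\mathbb{E}[e^{-X}]$. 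The matching lower bound comes from $\mathbb{P}_1(1\leq Z_n\leq cn^2)\geq \mathbb{P}_1(Z_n=1)$ together with the already-established asymptotic $\lim\tfrac{1}{n}\log\mathbb{P}_1(Z_n=1)=\log\mathbb{E}[e^{-X}]$ in this regime (Section \ref{linearfrac}).

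\emph{First claim.} The freedom in the infimum lets me pick $z_n:=\lceil e^{\alpha n}\rceil$ with $\alpha>0$ to be chosen later; then $z_n\geq cn^2$ for $n$ large. Since $Z_n$ starting from $z_n$ is a sum of $z_n$ i.i.d.\ LF trees, a direct computation from (\ref{genf1}) gives $\mathbb{E}[Z_n\mid\mathcal{E},Z_0=z_n]=z_n e^{S_n}$ and, using $f''_{0,n}(1)=2 e^{2S_n}\sum_{k=1}^n\eta_k e^{-S_{k-1}}$,
$$
\mathrm{Var}(Z_n\mid\mathcal{E},Z_0=z_n)\leq 2 z_n e^{2S_n}\bigg(e^{-S_n}+\sum_{k=1}^n\eta_k e^{-S_{k-1}}\bigg)=\frac{2 z_n e^{2S_n}}{\mathbb{P}_1(Z_n>0\mid\mathcal{E})}.
$$
Chebyshev's inequality on $\{z_n e^{S_n}\geq 4\}$ then yields $\mathbb{P}_{z_n}(Z_n\leq 2\mid\mathcal{E})\leq 8/\bigl(z_n\mathbb{P}_1(Z_n>0\mid\mathcal{E})\bigr)$, and bounding trivially off this event,
$$
\mathbb{P}_{z_n}(Z_n\leq 2)\leq \frac{8\,\mathbb{E}\bigl[\mathbb{P}_1(Z_n>0\mid\mathcal{E})^{-1}\bigr]}{z_n}+\mathbb{P}(S_n\leq -\alpha n+\log 4).
$$

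For the first term, independence of $\eta_k$ and $S_{k-1}$ gives $\mathbb{E}[\mathbb{P}_1(Z_n>0\mid\mathcal{E})^{-1}]=\mathbb{E}[e^{-X}]^n+\mathbb{E}[\eta]\sum_{k=0}^{n-1}\mathbb{E}[e^{-X}]^k$, which is uniformly bounded since $\mathbb{E}[e^{-X}]<1$, so the first term is $O(e^{-\alpha n})$. For the second, Chernoff's inequality with exponent $s=1$ (valid because $\mathbb{E}[e^{-X}]<\infty$ follows from $\mathbb{E}[X^2 e^{-X}]<\infty$) gives $\mathbb{P}(S_n\leq -\alpha n+\log 4)\leq 4\,e^{n(\log\mathbb{E}[e^{-X}]-\alpha)}$, whose rate tends to $-\infty$ as $\alpha\to\infty$. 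For any fixed $M>0$, picking $\alpha$ large enough that $\alpha-|\log\mathbb{E}[e^{-X}]|\geq M$ yields $\mathbb{P}_{z_n}(Z_n\leq 2)\leq C\,e^{-Mn}$ for $n$ large, so $\inf_{z\geq cn^2}\mathbb{P}_z(Z_n=2)\leq C\,e^{-Mn}$; this proves the claim, $M$ being arbitrary. The principal mild technical point is the boundedness of $\mathbb{E}[\eta]$, which is routine under the LF-case moment hypotheses; should it fail, a fractional-moment Markov bound using $\mathbb{E}[\eta^s]<\infty$ for some small $s>0$ and $\mathbb{E}[e^{-sX}]<1$ achieves the same conclusion.
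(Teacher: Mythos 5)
Your proof of the second claim matches the paper's: you use the geometric monotonicity in $k$ of $\mathbb{P}_1(Z_n=k\mid\mathcal{E})$ to get $\mathbb{P}_1(1\leq Z_n\leq cn^2)\leq cn^2\,\mathbb{P}_1(Z_n=1)$, then appeal to the rate $\lim \tfrac{1}{n}\log\mathbb{P}_1(Z_n=1)=\log\mathbb{E}[e^{-X}]$ from Section \ref{linearfrac}. For the first claim you take a genuinely different route (Chebyshev for a single, exponentially large initial size versus the paper's combinatorial tail bound over all $z$), but there are two real problems.

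First, the $\inf$ in the lemma is almost certainly a typo for $\sup$: the paper's own proof dominates $\sum_{k\geq \lfloor cn^2\rfloor}\mathbb{P}_k(Z_n=2\mid\mathcal{E})$ by a geometric-series estimate, and Lemma \ref{le3} invokes the lemma precisely to control $\sum_{k>n^2}\mathbb{P}_1(Z_{n-\lfloor\delta n\rfloor}=k)\,\mathbb{P}_k(Z_{\lfloor\delta n\rfloor}=2)$, which needs a bound that is uniform in $k>n^2$. By choosing a single $z_n=\lceil e^{\alpha n}\rceil$ you only obtain $\inf_{z\geq cn^2}\mathbb{P}_z(Z_n=2)\leq\mathbb{P}_{z_n}(Z_n=2)$; that is what you prove, but not what the lemma is used for. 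At the smallest admissible $z=\Theta(n^2)$ your Chebyshev estimate $8/(z\,\mathbb{P}_1(Z_n>0\mid\mathcal{E}))$ is only polynomially small, so the $\sup$ over $z\geq cn^2$ is out of reach by this method. Second, you genuinely need $\mathbb{E}[\eta]<\infty$ (i.e. $\mathbb{E}\big[f''(1)e^{-2X}\big]<\infty$) to control $\mathbb{E}\big[\mathbb{P}_1(Z_n>0\mid\mathcal{E})^{-1}\big]=\mathbb{E}[e^{-X}]^n+\mathbb{E}[\eta]\sum_{k<n}\mathbb{E}[e^{-X}]^k$. This is not among the hypotheses and is not implied by $\mathbb{E}[X^2e^{-X}]<\infty$ or $\mathbb{E}\big[e^{(-1-s)X}\big]<\infty$; your fallback $\mathbb{E}[\eta^s]<\infty$ for small $s$ is equally an extra assumption. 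Note that when the paper does need a moment on $f''(1)$ (Lemmas \ref{helpint}, \ref{mrcaint2}, \ref{mrcaint3}) it states it explicitly, while Lemma \ref{le_suph} carries none because the paper's proof replaces your $\mathbb{E}\big[\mathbb{P}_1(Z_n>0\mid\mathcal{E})^{-1}\big]$ by the free almost-sure bound $\mathbb{P}_1(Z_n>0\mid\mathcal{E})\leq e^{S_n}$ inside the combinatorial estimate, so no moment of $\eta$ is ever taken.
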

\begin{proof}  
First, we observe that on the event $\{Z_n=2\}$, at most two initial subtrees  survive until generation $n$. 
Using  the  the branching property  conditionally  on $\mathcal{E}$, we have a.s.
\begin{align*}
 \inf_{z\geq cn^2} \mathbb{P}_{z}(Z_n=2|\mathcal{E}) &\leq \sum_{k=\lfloor cn^2\rfloor}^\infty {k \choose 2} \mathbb{P}_1(Z_n=0|\mathcal{E})^{\lfloor cn^2\rfloor-2} \mathbb{P}_1(1\leq Z_n\leq 2|\mathcal{E})^2 \\
 &\quad +\sum_{k=\lfloor cn^2\rfloor}^\infty {k \choose 1} \mathbb{P}_1(Z_n=0|\mathcal{E})^{\lfloor cn^2\rfloor-1} \mathbb{P}_1(1\leq Z_n\leq 2|\mathcal{E}) .
\end{align*}
Again, we use the geometric form of LF distributions to see that  $\mathbb{P}(1\leq Z_n\leq 2|\mathcal{E})\leq 2 \mathbb{P}(Z_n=1|\mathcal{E})$ a.s. Next, we use 
 \begin{align*}
\sum_{k=n}^\infty k(k-1) \alpha^{k-2} \leq n^2 \frac{\alpha^{n-2}}{(1-\alpha)^3}, \ \sum_{k=n}^\infty k \ \alpha^{k-1} \leq n \frac{\alpha^{n-1}}{(1-\alpha)^2}
\end{align*}
to get 
\begin{align*}
 &\inf_{z\geq cn^2} \mathbb{P}_{z}(Z_n=2|\mathcal{E}) \\
&\ \ \leq n^2 \bigg[c^2
\frac{\mathbb{P}_1(Z_n=0|\mathcal{E})^{\lfloor cn^2\rfloor-2} \mathbb{P}_1(Z_n=1|\mathcal{E})^2}{(1-\mathbb{P}_1(Z_n=0|\mathcal{E}))^3}+c
\frac{\mathbb{P}_1(Z_n=0|\mathcal{E})^{\lfloor cn^2\rfloor-1} \mathbb{P}_1(Z_n=1|\mathcal{E})}{(1-\mathbb{P}_1(Z_n=0|\mathcal{E}))^2}\bigg]\\
&\ \ =n^2 \bigg[c^2
\frac{\mathbb{P}_1(Z_n=0|\mathcal{E})^{\lfloor cn^2\rfloor-2} \mathbb{P}_1(Z_n=1|\mathcal{E})^2}{\mathbb{P}_1(Z_n>0|\mathcal{E})^3}+c
\frac{\mathbb{P}_1(Z_n=0|\mathcal{E})^{\lfloor cn^2\rfloor-1} \mathbb{P}_1(Z_n=1|\mathcal{E})}{\mathbb{P}_1(Z_n>0|\mathcal{E})^2}\bigg] \\
& \ \ = n^2 \bigg[c^2
\mathbb{P}_1(Z_n=0|\mathcal{E})^{\lfloor cn^2\rfloor-2} e^{-2S_n}\mathbb{P}_1(Z_n>0|\mathcal{E})+c
\mathbb{P}_1(Z_n=0|\mathcal{E})^{\lfloor cn^2\rfloor-1} e^{-S_n}\bigg].
\end{align*}
by using (\ref{utile}). Finally, as $\mathbb{P}(Z_n>0|\mathcal{E})\leq e^{S_n}$ a.s., we get that a.s.
\begin{align*}
 \inf_{z\geq cn^2} \mathbb{P}_{z}(Z_n=2|\mathcal{E})&\leq (c^2+c)n^2 
\mathbb{P}_1(Z_n=0|\mathcal{E})^{\lfloor cn^2\rfloor-1} e^{-S_n} .
\end{align*}
Next, we use again  the change of  measure 
\begin{align*}
 \mathbf{P}(X\in dx)=\frac{e^{-x} \mathbb{P}(X\in dx)}{\mathbb{E}[e^{-X}]} .
\end{align*}
Then
\begin{align*}
 \inf_{z\geq cn^2} \mathbb{P}_{z}(Z_n=2)\leq (c^2+c)n^2 \mathbb{E}[e^{-X}]^n \mathbf{E}[\mathbb{P}(Z_\infty=0|\mathcal{E})^{\lfloor cn^2\rfloor-2}] .
\end{align*}
Using Jensen's inequality yields
\begin{align*}
\limsup_{n\rightarrow\infty}\tfrac 1n \log \inf_{z\geq cn^2} \mathbb{P}_{z}(Z_n=2)&\leq \log\mathbb{E}[e^{-X}]+ \limsup_{n\rightarrow\infty}n\mathbf{E}[\log\mathbb{P}(Z_\infty=0|\mathcal{E})] .
\end{align*}
Finally, note that $\mathbb{E}[Xe^{-X}]>0$ implies $\mathbf{E}[X]>0$. Thus $\mathbf{P}(\mathbb{P}(Z_\infty=0|\mathcal{E})<1)>0$ and 
therefore  $\mathbf{E}[\log\mathbb{P}(Z_\infty=0|\mathcal{E})]<0$. This yields the first result. \medskip\\
For the second claim, we use again the geometric form of the probabilities of of LF  distributions to get
\begin{align*}
\mathbb{P}_1(1\leq Z_n\leq cn^2|\mathcal{E})\leq \lceil cn^2\rceil \mathbb{P}_1(Z_n=1|\mathcal{E}) .
\end{align*}
Thus, taking expectations yields
\begin{align*}
\limsup_{n\rightarrow\infty} \tfrac 1n\log\mathbb{P}_1(1\leq Z_n\leq cn^2)&\leq \limsup_{n\rightarrow\infty} \tfrac 1n\log \Big(\lceil cn^2\rceil \mathbb{P}_1(Z_n=1)\Big) \\
&=\limsup_{n\rightarrow\infty}\tfrac 1n \log \mathbb{P}_1(Z_n=1)= \log\mathbb{E}[e^{-X}]\  .
\end{align*}
where the last result has been shown in the previous subsection.
\end{proof}

\begin{lemma}\label{le3} We assume that  $\mathbb{E}[Xe^{-X}]>0$. Then,  for every $\delta\in(0,1]$,
\[\limsup_{n\rightarrow\infty}\tfrac 1n \log\mathbb{P}_1(MRCA_n>\delta n|Z_n=2)<0. \]
\end{lemma}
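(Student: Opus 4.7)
Set $m:=\lfloor(1-\delta)n\rfloor$, so that $\{MRCA_n>\delta n\}$ is exactly the event that the two survivors at time $n$ descend from two distinct ancestors present at time $m$. Conditioning on $\mathcal{E}$ and $Z_m=z$ and using the branching property for the subtrees rooted at generation $m$, the quenched probability equals
\[\mathbb{P}_1(MRCA_n>\delta n, Z_n=2\,|\,\mathcal{E})=\sum_{z\geq 2}\mathbb{P}_1(Z_m=z\,|\,\mathcal{E})\binom{z}{2}q^2 p^{z-2},\]
with $p=\mathbb{P}_1(Z_{n-m}=0\,|\,\mathcal{E}_{(m,n]})$ and $q=\mathbb{P}_1(Z_{n-m}=1\,|\,\mathcal{E}_{(m,n]})$. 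Because the law of $Z_m$ given $\mathcal{E}_{\leq m}$ is itself linear fractional, $\mathbb{P}_1(Z_m=z\,|\,\mathcal{E})=\mathbb{P}_1(Z_m=1\,|\,\mathcal{E})r_m^{z-1}$ with $r_m=A_m/(e^{-S_m}+A_m)$. Summing via $\sum_{z\geq 2}\binom{z}{2}r^{z-1}p^{z-2}=r/(1-rp)^3$ and simplifying using the formulas of Section \ref{linearfrac}, one arrives at the exact quenched identity
\[\mathbb{P}_1(MRCA_n>\delta n,Z_n=2\,|\,\mathcal{E})=\mathbb{P}_1(Z_n=2\,|\,\mathcal{E})\cdot\frac{A_m e^{-(S_n-S_m)}}{e^{-(S_n-S_m)}+A_{m,n}},\]
where $A_{m,n}:=\sum_{k=m+1}^n \eta_k e^{-(S_{k-1}-S_m)}$ is the shifted analogue of $A_m$.

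I would then pass to the tilted measure $\mathbf{P}(X\in dx)=e^{-x}\mathbb{P}(X\in dx)/\mathbb{E}[e^{-X}]$ already used in Sections \ref{rho0}--\ref{linearfrac}, under which $\mathbf{E}[X]=\mathbb{E}[Xe^{-X}]/\mathbb{E}[e^{-X}]>0$. The extra moment hypothesis $\mathbb{E}[e^{(-1-s)X}]<\infty$ yields $\mathbf{E}[e^{-\theta X}]<\infty$ for $\theta\in(-1,s]$, and the convexity of $\theta\mapsto\log\mathbf{E}[e^{-\theta X}]$ together with its negative slope $-\mathbf{E}[X]$ at $0$ provides some $\theta\in(0,s\wedge 1]$ for which $\mathbf{E}[e^{-\theta X}]<1$; fix one such $\theta$. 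Using the exact LF formula $\mathbb{P}_1(Z_n=2\,|\,\mathcal{E})=e^{-S_n}A_n/(e^{-S_n}+A_n)^3$, the tilt produces
\[\mathbb{P}_1(MRCA_n>\delta n,Z_n=2)=\mathbb{E}[e^{-X}]^n\,\mathbf{E}\Big[\frac{A_n}{(e^{-S_n}+A_n)^3}\cdot\frac{A_m e^{-(S_n-S_m)}}{e^{-(S_n-S_m)}+A_{m,n}}\Big].\]

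The tilted expectation is then controlled by combining: (i) $A_n/(e^{-S_n}+A_n)^3\leq 1/A_n^2\leq 1/A_m^2$ since $A_n\geq A_m$; (ii) the elementary inequality $y/(1+y)\leq y^{\theta}$ for $y\geq 0$ and $\theta\in(0,1]$, applied with $y=e^{-(S_n-S_m)}/A_{m,n}$; (iii) the deterministic bound $A_{m,n}\geq\eta_{m+1}$ (the $k=m+1$ term); and (iv) the independence under $\mathbf{P}$ of $A_m$ (measurable w.r.t.\ $\mathcal{E}_{\leq m}$) from $(S_n-S_m,\eta_{m+1})$ (measurable w.r.t.\ $\mathcal{E}_{(m,n]}$). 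Writing $S_n-S_m=X_{m+1}+(S_n-S_{m+1})$, these inputs combine to give a bound of the shape
\[\mathbf{E}[A_m^{-1}]\cdot\mathbf{E}[e^{-\theta X}/\eta^{\theta}]\cdot\mathbf{E}[e^{-\theta X}]^{n-m-1},\]
whose last factor decays geometrically in $n-m=\lceil\delta n\rceil$. Combined with the lower bound $\mathbb{P}_1(Z_n=2)\geq c\mathbb{E}[e^{-X}]^n$ (obtained by Fatou applied to $A_n/(e^{-S_n}+A_n)^3\to 1/A_\infty^2$ a.s.\ under $\mathbf{P}$), this yields the claimed negative exponential rate for $\mathbb{P}_1(MRCA_n>\delta n\,|\,Z_n=2)$. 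The main obstacle is the finiteness of the negative moments $\mathbf{E}[A_m^{-1}]\leq\mathbf{E}[\eta_1^{-1}]$ and $\mathbf{E}[e^{-\theta X}/\eta^{\theta}]$: the LF constraint $\eta\geq 1-e^{-X}$ forced by $f(0)\in[0,1)$ keeps $1/\eta$ under control on environments where $X$ is bounded away from $-\infty$, while the remaining rare environments with very negative $X$ must be handled by the same truncation device already used in the proof of Proposition~\ref{prop2}(i).
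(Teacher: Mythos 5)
Your ``exact quenched identity'' is off by a factor of $A_n$. Carrying out the computation you describe with $a=e^{-S_m}$, $b=A_m$, $c=e^{-(S_n-S_m)}$, $d=A_{m,n}$, so that $e^{-S_n}=ac$ and $A_n=b+ad$, one finds
\[
\mathbb{P}_1(Z_m=1\,|\,\mathcal{E})\,q^2\,\frac{r_m}{(1-r_m p)^3}=\frac{abc^2}{(c+d)(ac+ad+b)^3},
\qquad
\mathbb{P}_1(Z_n=2\,|\,\mathcal{E})=\frac{ac(b+ad)}{(ac+ad+b)^3},
\]
so the true quenched ratio is $\dfrac{A_m\,e^{-(S_n-S_m)}}{A_n\big(e^{-(S_n-S_m)}+A_{m,n}\big)}$, not the expression you wrote. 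This matters because the spurious $A_n$ in your formula is what makes your step (i) bound $A_n/(e^{-S_n}+A_n)^3\le 1/A_n^2$ look attractive; with the corrected formula the same strategy gives $1/A_m^2$, demanding $\mathbf{E}[A_m^{-2}]$ rather than $\mathbf{E}[A_m^{-1}]$, and in either case one lands on negative moments of $\eta$. You have correctly identified that this is the sticking point, but the proposed remedy does not work: the truncation in the proof of Proposition~\ref{prop2}(i) replaces $Q$ by $\bar Q$ to bound $\eta$ \emph{from above}, whereas here you need a \emph{lower} bound on $\eta$; moreover the LF constraint $\eta\ge 1-e^{-X}$ is vacuous whenever $X<0$ (e.g.\ $\eta=0$, the Bernoulli case, is admissible there), so nothing in the hypotheses of Corollary~\ref{lf1} prevents $\mathbf{E}[\eta^{-1}]=\infty$. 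As it stands, the argument has a genuine gap at exactly the point you flagged.

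The missing trick is to avoid $1/A_m$ altogether. With the corrected ratio, since $A_m\le A_n\le e^{-S_n}+A_n$, one gets
\[
\frac{1}{(e^{-S_n}+A_n)^3}\cdot\frac{A_m\,e^{-(S_n-S_m)}}{e^{-(S_n-S_m)}+A_{m,n}}
\;\le\;\frac{1}{(e^{-S_n}+A_n)^2}\cdot\frac{e^{-(S_n-S_m)}}{e^{-(S_n-S_m)}+A_{m,n}}
\;=\;\mathbb{P}_1(Z_n>0\,|\,\mathcal{E})^2\cdot e^{-(S_n-S_m)}\mathsf{p}_{m,n},
\]
which involves only survival probabilities and can be controlled by $e^{2L_n}$ and the large-deviation estimate for $S$ under $\mathbf P$ --- no negative moments of $\eta$ appear. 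This is essentially what the paper does, though via a different decomposition: rather than using the LF quenched identity, the paper splits on whether $Z_{n-\lfloor\delta n\rfloor}\le n^2$ or $>n^2$, bounds the first contribution using $\mathbb{E}\big[\mathbb{P}_1(Z_{\lfloor\delta n\rfloor}=1|\mathcal{E})^2\big]\le\mathbb{E}\big[e^{-2S_{\lfloor\delta n\rfloor}+4L_{\lfloor\delta n\rfloor}}\big]$ from (\ref{eq_p1}), and kills the second with the super-exponential bound on $\inf_{z\ge n^2}\mathbb{P}_z(Z_n=2)$ from Lemma~\ref{le_suph}. The exponential gain in both approaches ultimately comes from $\mathbf{P}(S_l<cl)$ being exponentially small under $\mathbf P$ when $0<c<\mathbf E[X]$, which is where the extra moment assumption $\mathbb{E}[e^{(-1-s)X}]<\infty$ enters.
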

\begin{proof} First, we recall that the event $\{MRCA_n=\lfloor \delta n\rfloor+1\}$ implies that there are at least two individuals in generation 
$n-\lfloor \delta n\rfloor$ and that 
from this generation on, at least two subtrees survive until generation $n$. 
As in the preceding lemmas, we use the branching property and a decomposition 
according to the two subtrees which survive and get that
\begin{align*}
\mathbb{P}&_1(MRCA_n=\lfloor \delta n\rfloor+1, Z_n=2)\nonumber\\
&\leq \sum_{k=2}^{n^2} {k\choose 2} 
\mathbb{P}_1(Z_{n-\lfloor\delta n\rfloor}=k) 
\mathbb{E}\big[\mathbb{P}_1(Z_{\lfloor \delta n\rfloor}=1|\mathcal{E})^2\mathbb{P}(Z_{\lfloor \delta n\rfloor}=0|\mathcal{E})^{k-2}\big] 
+ \inf_{z\geq n^2} \mathbb{P}_z(Z_{\lfloor \delta n\rfloor}=2)\nonumber\\
&\leq  A_n+B_n,  
\end{align*}
where
$$A_n:=n^4 \mathbb{P}_1(1\leq Z_{n-\lfloor\delta n\rfloor}\leq n^2)\mathbb{E}\big[\mathbb{P}_1(Z_{\lfloor \delta n\rfloor}=1|\mathcal{E})^2\big], \qquad  
B_n:=\inf_{z\geq n^2} \mathbb{P}_z(Z_{\lfloor \delta n\rfloor}=2).\nonumber\\
$$
Letting $n$ go to $\infty$  and applying Lemma \ref{le_suph} yields
\begin{align}
&\limsup_{n\rightarrow\infty} \tfrac 1n \log  \mathbb{P}_1(MRCA_n=\lfloor \delta n\rfloor+1, Z_n=2) \nonumber \\
& \qquad \leq \max\Big\{ \limsup_{n\rightarrow\infty}\tfrac 1n \log A_n,  
\limsup_{n\rightarrow\infty}\tfrac 1n\log B_n\Big\} \nonumber \\
&\qquad =\max\Big\{ \limsup_{n\rightarrow\infty}\tfrac 1n \log A_n,  -\infty\Big\}=\limsup_{n\rightarrow\infty}\tfrac 1n \log A_n.\label{eq_sup1}
\end{align}
Next, let us treat the term named $A_n$. By Lemma \ref{le_suph},
\begin{align}\label{eq_sup2}
 \limsup_{n\rightarrow\infty} \tfrac{1}{n} \log \mathbb{P}(1\leq Z_{n-\lfloor\delta n\rfloor} \leq  n^2) = (1-\delta)\log\mathbb{E}[e^{-X}] . 
\end{align}
Using (\ref{utile}) and $\mathbb{P}(Z_n>0|\mathcal{E})\leq e^{L_n}$ a.s. yields
\begin{align}
 \mathbb{E}[&\mathbb{P}_1(Z_{\lfloor \delta n\rfloor}=1|\mathcal{E})^2]\leq \mathbb{E}[e^{-2S_{\lfloor \delta n\rfloor}}\mathbb{P}_1(Z_{\lfloor \delta n\rfloor}>0|\mathcal{E})^4]\leq\mathbb{E}[e^{-2S_{\lfloor \delta n\rfloor}+4L_{\lfloor \delta n\rfloor}}] . \label{eq_sup3}
\end{align}
We recall the change of measure
\begin{align*}
 \mathbf{P}(X\in dx)=\frac{e^{-x} \mathbb{P}(X\in dx)}{\mathbb{E}[e^{-X}]}.
\end{align*}
Then $\mathbb{E}[Xe^{-X}]>0$ assures that $\mathbf{E}[X]>0$ and under $\mathbf{P}$, 
$S$ is a random walk with positive drift. From (\ref{eq_sup1}), (\ref{eq_sup2}) and (\ref{eq_sup3}) 
we get
\begin{align*}
\limsup_{n\rightarrow\infty}\tfrac 1n \log \mathbb{P}&_1(MRCA_n=\lfloor \delta n\rfloor+1, Z_n=2)\nonumber \\
&\leq(1-\delta)\log\mathbb{E}[e^{-X}]+\limsup_{n\rightarrow\infty} 
\tfrac 1n\log\mathbb{E}[e^{-2S_{\lfloor \delta n\rfloor}+4L_{\lfloor \delta n\rfloor}}]\\
&=(1-\delta)\log\mathbb{E}[e^{-X}]+\delta \log\mathbb{E}[e^{-X}] +\limsup_{n\rightarrow\infty} 
\tfrac 1n\log\mathbf{E}[e^{-S_{\lfloor \delta n\rfloor}+4L_{\lfloor \delta n\rfloor}}]\\
&=\log\mathbb{E}[e^{-X}]+\limsup_{n\rightarrow\infty} 
\tfrac 1n\log\mathbf{E}[e^{-S_{\lfloor \delta n\rfloor}+4L_{\lfloor \delta n\rfloor}}].
\end{align*}
As  $\mathbb{E}[Xe^{-X}]>0$, we know from the previous subsection  
that $\lim_{n\rightarrow\infty} \tfrac 1n \log \mathbb{P}_1(Z_n=2)=\log\mathbb{E}[e^{-X}]$. So the previous inequality yields
\begin{align*}
\limsup_{n\rightarrow\infty}\tfrac 1n \log \mathbb{P}&_1(MRCA_n=\lfloor \delta n\rfloor+1| Z_n=2)\leq\limsup_{n\rightarrow\infty} 
\tfrac 1n\log\mathbf{E}[e^{-S_{\lfloor \delta n\rfloor}+4L_{\lfloor \delta n\rfloor}}].
\end{align*}
Finally, we  prove that $\limsup_{n\rightarrow\infty}\tfrac 1n \log\mathbf{E}[e^{-S_{\lfloor \delta n\rfloor}+4L_{\lfloor \delta n\rfloor}}]<0$ to conclude. 
Decomposing the expectation with $0<c<\mathbf{E}[X]$ and using $4L_{\lfloor \delta n\rfloor}-S_{\lfloor \delta n\rfloor}\leq 0$ yields
\begin{align*}
 \mathbf{E}[&e^{-S_{\lfloor \delta n\rfloor}+4L_{\lfloor \delta n\rfloor}}]\leq e^{-c\lfloor \delta n\rfloor} + 
\mathbf{P}(4L_{\lfloor\delta n\rfloor}-S_{\lfloor\delta n\rfloor}>-c \lfloor \delta n\rfloor) \\
&\leq e^{-c\lfloor \delta n\rfloor} + \mathbf{P}(S_{\lfloor\delta n\rfloor}<c \lfloor \delta n\rfloor) .
\end{align*}
As $0<c<\mathbf{E}[X]$, by standard results of large deviation theory, the probability on the right-hand side is exponentially small if 
$\mathbf{E}[e^{-sX}]=\mathbb{E}[e^{(-1-s)X}]<\infty$ for some $s>0$. This yields that
\[\limsup_{n\rightarrow\infty}\tfrac 1n \log\mathbb{P}_1(MRCA_n=\lfloor \delta n\rfloor+1|Z_n=2)<0\]
and thus
\[\limsup_{n\rightarrow\infty}\tfrac 1n \log\mathbb{P}_1(MRCA_n>\delta n|Z_n=2)<0.\]
\end{proof}

\section{Examples with two environments :   dependence on the initial and final population.}\label{examples}
In this section, we focus on the importance of the initial population.
\paragraph{Example  1 : the limits of $\tfrac{1}{n}\log\mathbb{P}_1(Z_n=i)$ and  $\tfrac{1}{n}\log\mathbb{P}_1(Z_n=j)$ may be both finite, negative but different.}
 Assume that the environment consists of  two states
$q_1$ and $q_2$ such that 
$$r:=\mathbb{P}(Q_1=q_1)=1-\mathbb{P}(Q_1=q_2)>0; \quad  q_1(1)=1; \quad 
q_2(0) = p, \  q_2(2) =1-p,$$
with $ p\in (0,1)$. Then
$$
\tfrac{1}{n} \log \mathbb{P}_1(Z_n=1) =\log r, \qquad
\tfrac{1}{n} \log \mathbb{P}_1(Z_n=2) \geq \max\big\{\log r; \log[(1-r)2(1-p)p]\big\}.
$$
where the term $\log r$ comes from a population which stays equal to $1$ in the environment sequence  $(q_1,q_1,q_1, \cdots)$ 
and the last term comes from a population which stays equal to $2$ 
in the environment sequence $(q_2,q_2,q_2, \cdots)$. Thus if $r$ is chosen small enough (i.e. $r<\tfrac{2 (1-p)p}{1+2(1-p)p}$), 
$$\lim_{n\rightarrow\infty}\tfrac{1}{n} \log \mathbb{P}_1(Z_n=1) < \lim_{n\rightarrow\infty} \tfrac{1}{n} \log \mathbb{P}_1(Z_n=2).$$

%Next, let us explain why the initial population is of importance and why the case $\mathbb{P}_1(Z_n=k)\gg \mathbb{P}_k(Z_n=k)$, $k>1$ is possible. \medskip\\
\paragraph{Example 2 : the limits of $\tfrac 1n\log\mathbb{P}_k(Z_n=k)$ and $\tfrac 1n\log\mathbb{P}_1(Z_n=k)$ with $k>1$ may be both finite, negative but different.}
Actually, in the case without extinction, we have 
$$\lim_{n\rightarrow \infty} \tfrac 1n\log\mathbb{P}_i(Z_n=k) =i\log\P(Z_1=1),$$
as soon as $\P_i(\exists n : Z_n=k)>0$ and the result is immediate.
We give here an example with $k=2$ and possible extinction. We  first observe that such an example is not possible with one environment, i.e. 
in the Galton-Watson case. Then we introduce a simple example in the random environment case  and check that it is not 
in contradiction to Theorem 2.1, before considering the asymptotic behavior of the probabilities involved.\\

Indeed, for Galton-Watson processes with reproduction law $q$ such that $q(0)>0$ and $q(2)>0$, the fact 
that $q(1)>0$ already ensures that the limits of $\tfrac 1n\log\mathbb{P}_2(Z_n=2)$ and $\tfrac 1n\log\mathbb{P}_1(Z_n=2)$ are equal. 
In the case $q(1)=0$, we get that
$$\mathbb{P}_1(Z_{n+1}=2) =\mathbb{P}_1(Z_1\geq 2, Z_{n+1}=2)=\sum_{i\geq 2} q(i)\P_i(Z_n=2)$$
whereas killing one of the initial individuals starting from $2$ and letting the other survive yields :
$$ \mathbb{P}_2(Z_{n+1}=2)\geq 2q(0)\sum_{i\geq 2} q(i)\P_i(Z_n=2).$$
Thus $\mathbb{P}_2(Z_{n+1}=2) \geq 2q(0)\mathbb{P}_1(Z_{n}=2)$. A converse inequality is clear, so the limits of $\tfrac 1n\log\mathbb{P}_2(Z_n=2)$ and $\tfrac 1n\log\mathbb{P}_1(Z_n=2)$
have to be equal in the Galton-Watson case with possible extinction.

Thus, we consider two environments to provide an example that the initial population size is also of importance even if extinction is possible.
More precisely, let the environment consist  of the  two states $q_1$ and $q_2$ such that 
\begin{align*}
&r:=\mathbb{P}(Q_1=q_1)=1-\mathbb{P}(Q_1=q_2)>0, \nonumber \\
& q_1(1)=p\quad , \quad q_1(a)=1-p, \nonumber \\
\quad 
&q_2(0) = p \quad , \quad q_2(2) =p \quad, \quad q_2(a)=1-2p, 
\end{align*}
with $p\in (0,\tfrac 12)$  and $a>2$. Note $k=1 \not\in Cl(\mathcal{I})$, so this example doesn't contradict
Theorem \ref{theo_rho1} where it is assumed that the initial population size is in $Cl(\mathcal{I})$.

To prove that $\mathbb{P}_1(Z_n=2)\gg \mathbb{P}_2(Z_n=2)$, we first observe that 
\be
\label{probab}
\liminf_{n\rightarrow\infty}\tfrac 1n \log \mathbb{P}_1(Z_n=2)\geq \log(rp),
\ee
 which comes from a population 
staying equal
 to $1$ before the last generation in the environment sequence $(q_1, q_1,\ldots, q_1,q_2)$. \medskip\\
Next, let us estimate the extinction probability, given the environment. We first observe that any BPVE 
whose environments are either $q_1$ or $q_2$ is stochastically larger than the Galton-Watson process with reproduction law (and unique environment) $q_2$. As a consequence, 
\[\mathbb{P}_1(Z_n=0|\mathcal{E}) \leq \mathbb{P}_1(Z_n=0|Q_1=q_2\ldots,Q_n=q_2)\leq \mathbb{P}(Z_\infty=0|Q_1=q_2,Q_2=q_2,\ldots )=:s_{e} \quad \text{a.s.} \] 
It is well-known that $s_{e}$ is given as the first fix point  of the generating function $f_2$ of $q_2$: 
\[ s_{e}=f_2(s_e)=p+ps_{e}^2 + (1-2p) s_{e}^a . \]

%\begin{figure}[here]
%\setlength{\unitlength}{1cm}
%\begin{center}
 %\includegraphics[angle=0,width=0.3\textwidth]{images/genf.jpg}
%\end{center}
%\caption{\label{genf} Generating function of a distribution with $q(0)>0$.} 
%\end{figure}
Let us now estimate $s_{e}$. 
For  $s=2p$, we have $2p>f_2(2p)=p+4p^3+(1-2p)2^ap^a$ if
$a$ is large enough since $p<1/2$. Thus $s_{e}\leq 2p$ if only $a$ is large enough.

We get  
then for all  $i\geq 1$, $k\leq n$, 
\[\mathbb{P}(Z_n=0|\mathcal{E},  Z_k=i)\leq s_{e}^i \leq (2p)^{i}\quad \text{a.s.} \]
Using this estimate and the explicit law of $\P(Z_{k+1} = \cdot \ \vert \ Z_k=2, Q_k=q_1)$, we obtain a.s.
\begin{align*}
  \mathbb{P}_2&(Z_n=2|\mathcal{E},Q_k=q_1,Z_k=2)\\
&= p^2 \mathbb{P}(Z_n=2|\mathcal{E},Z_{k+1}=2)+2(1-p)p \mathbb{P}(Z_n=2|\mathcal{E},Z_{k+1}=1+a)\\
&\qquad \qquad+(1-p)^2\mathbb{P}(Z_n=2|\mathcal{E},Z_{k+1}=2a)\\
&= \mathbb{P}(Z_n=2|\mathcal{E},Z_{k+1}=2) \Big(p^2 +2(1-p)p  {a+1\choose 2}\mathbb{P}(Z_n=0|\mathcal{E},Z_{k+1}=1+a-2)\\
&\qquad +(1-p)^2 {2a\choose 2}\mathbb{P}(Z_n=0|\mathcal{E},Z_{k+1}=2a-2)\Big)\\
&\leq \mathbb{P}(Z_n=2|\mathcal{E},Z_{k+1}=2) \Big(p^2 +2(1-p)p {a+1\choose 2}\big(2p\big)^{a-1}+(1-p)^2 {2a\choose 2}\big(2p\big)^{2a-2}\Big)
\end{align*}
If $p$ is small enough (depending on $a>2$), we get that a.s.
\begin{align*}
 \mathbb{P}_2&(Z_n=2|\mathcal{E},Q_k=q_1,Z_k=2)\leq \mathbb{P}(Z_n=2|\mathcal{E},Z_{k+1}=2) 3 p^2 .
\end{align*}
Analogously, if the environment $q_2$ occurs in generation $k$, we get that a.s.
\begin{align*}
  \mathbb{P}_2&(Z_n=2|\mathcal{E},Q_k=q_2,Z_k=2)\\
&= 2 p^2 \mathbb{P}(Z_n=2|\mathcal{E},Z_{k+1}=2)+p^2 \mathbb{P}(Z_n=2|\mathcal{E},Z_{k+1}=4)+2p(1-2p)\mathbb{P}(Z_n=2|\mathcal{E},Z_{k+1}=a)\\
&\quad +2 p(1-p) \mathbb{P}(Z_n=2|\mathcal{E},Z_{k+1}=a+2)+(1-2p)^2 \mathbb{P}(Z_n=2|\mathcal{E},Z_{k+1}=2a)\\
&\leq \mathbb{P}(Z_n=2|\mathcal{E},Z_{k+1}=2) 3p^2 .
\end{align*}
Next, note that the population starting from $Z_0=2$ is either always $\geq 2$ or extinct. Thus in each generation, 
there are at least two individuals and we may apply the estimates above for the subtrees emerging in generation $k$.   
Finally we get that
\begin{align*}
\limsup_{n\rightarrow\infty} \tfrac 1n\log\mathbb{P}_2(Z_n=2) =\limsup_{n\rightarrow\infty} \tfrac 1n\log\mathbb{E}\big[\mathbb{P}_2(Z_n=2|\mathcal{E})\big] \leq \log (3p^2).
\end{align*}
We now choose $p$ small enough such that $3p^2<rp$ and recall (\ref{probab}) to get
\begin{align*}
\limsup_{n\rightarrow\infty} \tfrac 1n\log\mathbb{P}_2(Z_n=2) <\liminf_{n\rightarrow\infty} \tfrac 1n\log\mathbb{P}_1(Z_n=2).
\end{align*}
Finally, we note that this example shows that, as in the the case without extinction in \cite{bansaye08}, the initial population may be of importance for the asymptotic of the probability of staying small,
 but alive. \\

\textbf{Acknowledgement.}  The authors wish to thank the anonymous referee for several comments and corrections of the earlier version of this paper, which have significantly improved its quality.
This work partially was funded by project MANEGE `Mod\`eles
Al\'eatoires en \'Ecologie, G\'en\'etique et \'Evolution'
09-BLAN-0215 of ANR (French national research agency),  Chair Modelisation Mathematique et Biodiversite VEOLIA-Ecole Polytechnique-MNHN-F.X. and the professorial chair Jean Marjoulet.

\bibliographystyle{apalike}
\end{document}